\documentclass[reqno,centertags,11pt,a4paper]{amsart}
\overfullrule=5pt
\usepackage{amssymb}
\usepackage{mathrsfs}
\usepackage{amsmath, amsfonts,vmargin, enumerate}
\usepackage{color}
\usepackage{amsthm}
\usepackage{latexsym,bm}
\usepackage{euscript}
\usepackage{color}
\usepackage{stmaryrd}
\usepackage[colorlinks=true,citecolor=blue,linkcolor=blue]{hyperref}

\begin{document}
	\newtheorem{theorem}{Theorem}
	\newtheorem{proposition}[theorem]{Proposition}
	\newtheorem{conjecture}[theorem]{Conjecture}
	\newtheorem{corollary}[theorem]{Corollary}
	\newtheorem{lemma}[theorem]{Lemma}
	\newtheorem{sublemma}[theorem]{Sublemma}
	\newtheorem{observation}[theorem]{Observation}
	\newtheorem{remark}[theorem]{Remark}
	\newtheorem{definition}[theorem]{Definition}
	\theoremstyle{definition}
	\newtheorem{notation}[theorem]{Notation}
	\newtheorem{question}[theorem]{Question}
	\newtheorem{example}[theorem]{Example}
	\newtheorem{problem}[theorem]{Problem}
	\newtheorem{exercise}[theorem]{Exercise}
	\numberwithin{theorem}{section} 
	\numberwithin{equation}{section}
	
	\title[Semi-commutative discrete Hardy-Littlewood maximal operators]{Dimension-free estimates for   semi-commutative discrete Hardy-Littlewood maximal operators}

	\author{Xudong Lai}
\address{Institute for Advanced Study in Mathematics, Harbin Institute of Technology, 150001 Harbin, China; Zhengzhou Research Institute\\
	Harbin Institute of Technology\\
	Zhengzhou
	450000\\
	China}
\email{xudonglai@hit.edu.cn}

\author{Yue Zhang}
\address{Institute for Advanced Study in Mathematics, Harbin Institute of Technology, 150001 Harbin, China}
\email{21b912030@stu.hit.edu.cn}
	
	%%\thanks{$^{\ast}$ Corresponding author}
	%

	\thanks{{\it 2000 Mathematics Subject Classification:}  42B25 · 46L52 · 46E40}
	\thanks{{\it Key words:} Disecrete Hardy-Littlewood maximal operator; Noncommutative $L_{p}$ space; dimension-free; maximal ergodic inequality}
	\thanks{X. Lai is supported by National Natural Science Foundation of China (No. 12271124, No. 12322107 and No. W2441002) and Heilongjiang Provincial Natural Science Foundation of China (YQ2022A005).}

	\date{}
	\maketitle

	%\color{blue}

	\begin{abstract}
			For $2\leq p\leq \infty$, we establish  dimension-free   estimates for    discrete dyadic Hardy-Littlewood maximal operators over   Euclidean balls  on  semi-commutative $L_{p}$ space.   In particular, when the radius is sufficiently large, these operators admit dimension-free $L_{p}$ bounds for all $1<p<\infty$. As applications,  we derive the corresponding maximal ergodic inequalities and the bilaterally almost uniform convergence.
	\end{abstract}
	\maketitle

	\section{Introduction}
	In the 1980s,  Stein \cite{Stein}   initiated the research  on  dimension-free estimates for the continuous Hardy-Littlewood maximal operator over Euclidean balls. More precisely, for $x\in\mathbb{R}^d$ and $t>0$,   define the averaging operator as  
	 \begin{align*} 
		\mathbf{M}^{B}_{t}f(x)=\frac{1}{|B|}\int_{B}f(x-ty)dy,\quad f\in L_{Loc}^{1}(\mathbb{R}^d),
	\end{align*} 
   where  $B$  is the   unit ball. Then, there exists a constant  $C_{p}>0$  independent of  the dimension $d$ such that 
	 \begin{align*} 
	 	\Big\|\sup_{t>0}\mathbf{M}^{B}_{t}f\Big\|_{L_{p}(\mathbb{R}^d)}\leq C_{p}\|f\|_{L_{p}(\mathbb{R}^d)},\quad 1<p<\infty.
	 \end{align*}
 Shortly afterwards, maximal operators over general symmetric convex bodies  were studied (see \cite{bourgain1986high,bourgain2014hardy,carbery1986almost,muller1990geometric}). For an exhaustive exposition of this topic, we refer the reader to \cite{MR3771542}. 
	 
	    However, the discrete analogues of such dimension-free estimates were not explored until 2019, as the corresponding phenomenon is not as broad  as in the continuous setting.  Let $G$ be a bounded, closed, and symmetric convex subset of $\mathbb{R}^d$ with  non-empty interior.  For $x\in\mathbb{Z}^d$ and $t>0$, we define the discrete Hardy-Littlewood averaging operator as
		\begin{align}\label{250224.2}
		\mathcal{M}^{G}_{t}f(x)=\frac{1}{|G_{t}\cap \mathbb{Z}^d|}\sum_{y\in G_{t}\cap \mathbb{Z}^d} f(x-y),\quad f\in \ell_{1}(\mathbb{Z}^d), 
	\end{align}
	where $G_{t}=\left\{y\in\mathbb{R}^d:t^{-1}y\in G\right\}$ denotes the dilation of  $G$.  Given $1<p<\infty$,   the optimal  constant $C_{p}(d,G)>0$ satisfying
	\begin{align*}  
	\Big\|\sup_{t>0}\mathcal{M}_{t}^{G}f\Big\|_{\ell_{p}(\mathbb{Z}^d)}\leq C_{p}(d,G)\|f\|_{\ell_{p}(\mathbb{Z}^d)}, 
	\end{align*}
is referred to as the $(p,p)$-norm of the discrete Hardy-Littlewood maximal operator. The classical result stated that   $C_{p}(d,G)$  is finite and depends  on  the dimension. A natural question is whether the constant $C_{p}(d,G)$
 is independent of   the dimension $d$ for some  symmetric convex body $G$. A logical starting point is to investigate  $B^q$ balls, a fundamental example of symmetric convex sets,   defined as:
	\begin{align*}
		 &B^q=\left\{x=(x_{1},\cdots,x_{d})\in \mathbb{R}^d: |x|_{q}=\bigg(\sum_{k=1}^{d}|x_{k}|^q\bigg)^{\frac{1}{q}}\leq 1\right\},\quad1\leq q<\infty;\\
		 &B^\infty=\left\{x=(x_{1},\cdots,x_{d})\in \mathbb{R}^d:|x|_{\infty}=\max_{1\leq k\leq d}|x_{k}|\leq 1 \right\}.
	\end{align*}
 Note that $B^2$ coincides with the Euclidean unit ball $B$ mentioned earlier, and we will continue to use the notation $B$ throughout the text.  Recall that   for $1\leq q<\infty$, the   $L_{p}$ norm of the continuous Hardy-Littlewood maximal operator over $B^{q}$  is independent of the  dimension $d$ when $1<p\leq \infty$ (see \cite{muller1990geometric}). The case   $q=\infty$  has been renewed by further advances especially due to Bourgain \cite{bourgain2014hardy}. In the discrete setting, a major challenge in establishing dimension-free bounds is the lack of precise error estimates for the number of lattice points in $B^{q}$. Recent progress has been achieved only for for $B$ and $B^{\infty}$ (see \cite{MR4175747,MR3992568}). Through  sophisticated analysis,  the  $\ell_{p}(\mathbb{Z}^d)$ norm of the discrete    Hardy-Littlewood maximal operator, with the supremum   restricted to the dyadic set $ \mathbb{D}=\left\{ 2^{n}:n\in\mathbb{N}\cup  \left\{0\right\}   \right\}$  is  dimension-free  for $2\leq p\leq  \infty $.  For simplicity, we denote $Q_{t}=B^{\infty}_{t}=[-t,t]^d$ and $Q =[-1,1]^d$. Then $Q_{t}$ is a cube and  its product structure enables us to  compute the number of lattice points in $Q_{t}$,  yielding $|Q_{t}\cap \mathbb{Z}^d|=(2\lfloor t\rfloor+1)^d$. This property distinguishes cubes from  $B^{q}$ balls for $1\leq q<\infty$ and plays a crucial  role in proving  that 
    $C_{p}(d,Q)$ is   independent of  the dimension $d$ when $\frac{3}{2}<p<\infty$. 
    
%     Fix a sequence 
%    $1\leq\lambda_{1}< \cdots<\lambda_{d}< \cdots<\sqrt{2}$ and consider the ellipsoid
%    \begin{align*}
%    	E(d)=\left\{ x\in \mathbb{R}^d: \sum_{j=1 }^{d}\lambda_{j}^2x_{j}^2\leq 1\right\}.
%    \end{align*}
%    Bourgain \cite{} showed that $C_{p}(d,E(d))$  grow to infinity as $d\rightarrow \infty$.

	 Based on  harmonic analysis, operator algebras, noncommutative geometry, and quantum probability, noncommutative harmonic analysis has rapidly developed  \cite{MR3079331,MR4320770,MR4048299,MR3283931,mei2007operator,xia2016characterizations,MR3778570}.  
      Among its branches, the semi-commutative harmonic analysis seems to be the easiest noncommutative theory, but it also needs new ideas.  For instance, let $\mathcal{M}$ be a von Neumann algebra and $f:\mathbb{R}^d\rightarrow \mathcal{ {M}}$ an operator-valued function belonging to   the semi-commutative $L_{p}$ space. The Hardy-Littlewood maximal operator  for a sequence of operators $( \mathbf{M}^{B}_{t}f)_{t>0}$  seems no available since we can not directly compare two operators in $\mathcal{M}$. Then, it is more subtle to establish the corresponding $L_{p}$  maximal inequality.  This obstacle was not overcome until the introduction of Pisier’s vector-valued noncommutative   spaces $L_{p}(L_{\infty}(\mathbb{R}^d)\overline{\otimes} \mathcal{M};\ell_{\infty})$. Following Pisier’s idea,  Mei \cite{mei2007operator} derived  the    semi-commutative Hardy-Littlewood maximal inequality based on
      Doob’s inequality (see \cite{MR1916654}):
      \begin{align}\label{250224.1}
      	\Big\|\sup_{t>0} \mathbf{M}^{B}_{t}f\Big\|_{L_{p}(L_{\infty}(\mathbb{R}^d)\overline{\otimes}\mathcal{M})}\leq C_{p,d} \|f\|_{L_{p}(L_{\infty}(\mathbb{R}^d)\overline{\otimes}\mathcal{M})}.
      \end{align}
    Here, the  norm $\Big\|\sup_{t>0} \mathbf{M}^{B}_{t}f\Big\|_{L_{p}(L_{\infty}(\mathbb{R}^d)\overline{\otimes}\mathcal{M})}$  is understood as $L_{p}(L_{\infty}(\mathbb{R}^d)\overline{\otimes} \mathcal{M};\ell_{\infty})$ norm of  the sequence $( \mathbf{M}^{B}_{t}f)_{t>0}$ in the noncommutative case; we  refer  the reader  to Definition \ref{250210.1} in Section \ref{241223.3} for more details.  Applying Stein’s approach (see  \cite{Stein}),  Hong \cite{MR3275742} showed that the constant $C_{p,d}$ in (\ref{250224.1}) is independent of the  dimension $d$ for $1<p<\infty$. In 2018,  the semi-commutative  Hardy-Littlewood maximal operators over  symmetric  convex body were studied \cite{hong2019pointwise},  extending   dimension-free estimates of \cite{bourgain1986high,muller1990geometric} to the semi-commutative setting.

	The purpose of this paper is  to  continue the work of  Bourgain,  Mirek, Stein, and Wr\'{o}bel \cite{MR4175747} on   dimension-free estimates for   discrete dyadic Hardy-Littlewood maximal operators over   Euclidean ball   in the semi-commutative setting. 
	
    Let $\mathcal{M}$ be a von Neumann algebra equipped with a normal semifinite faithful trace $\tau$. Define the noncommutative  space  $L_{p}(\mathcal{M})$ associated with the pairs $(\mathcal{M},\tau)$. We consider the tensor von Neumann algebra $\mathcal{N}=\ell_{\infty}(\mathbb{Z}^d)\overline{\otimes}\mathcal{M}$  equipped with the trace $\sum\otimes\tau $. Then,   $L_{p}(\mathcal{N})$ is the semi-commutative space associated with the pairs  $(\mathcal{N}, \sum\otimes\tau)$. Notice that 
	     $L_{p}(\mathcal{N})$ isometrically coincides with $L_{p}(\mathbb{Z}^d;L_{p}(\mathcal{M}))$, the  $p$-integrable  functions from $\mathbb{Z}^d$ to $L_{p}(\mathcal{M})$. Given the dyadic set $ \mathbb{D}=\left\{ 2^{n}:n\in\mathbb{N}\cup  \left\{0\right\}   \right\}$, the semi-commutative discrete Hardy-Littlewood averaging operator over $\mathbb{Z}^d\cap B_{N}$ with $N\in \mathbb{D}$ has the same form as (\ref{250224.2}) but acts on functions in $ L_{p}(\mathcal{N})$.  We present our main result below, which extends the previous findings \cite{MR4175747} to the semi-commutative setting.
	   \begin{theorem}\label{8.17.1}
	   	Let $2\leq p\leq\infty$ and $f\in L_{p}(\mathcal{N})$. There exists a constant $C_{p}>0$ independent of the dimension $d$ such that
	   	\begin{align*}
	   		\Big\|\sup_{N\in \mathbb{D}}\mathcal{M}_{N}^{B}f\Big\|_{L_{p}(\mathcal{N})}\leq C_{p}\|f\|_{L_{p}(\mathcal{N})}.
	   	\end{align*}
	   \end{theorem}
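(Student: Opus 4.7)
The plan is to follow the commutative blueprint of Bourgain--Mirek--Stein--Wr\'obel \cite{MR4175747} and to adapt each step to the semi-commutative framework via Pisier's vector-valued noncommutative $L_{p}(\mathcal{N};\ell_{\infty})$ spaces (Definition~\ref{250210.1}). The endpoint $p=\infty$ is immediate: since each $\mathcal{M}_{N}^{B}$ is a unital completely positive contraction on $L_{\infty}(\mathcal{N})$, the sequence $(\mathcal{M}_{N}^{B}f)_{N\in\mathbb{D}}$ is uniformly bounded by $\|f\|_{L_{\infty}(\mathcal{N})}$, which yields the bound with constant $1$ for the $L_{\infty}(\mathcal{N};\ell_{\infty})$ norm. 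By complex interpolation on the Pisier scale, it then suffices to establish the case $p=2$.

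For $p=2$, observe that $L_{2}(\mathcal{N})$ is isometric to the Hilbertian tensor product $\ell_{2}(\mathbb{Z}^{d})\otimes_{2} L_{2}(\mathcal{M})$, so each $\mathcal{M}_{N}^{B}$ acts as $T_{N}\otimes\mathrm{Id}_{L_{2}(\mathcal{M})}$, where $T_{N}$ is the scalar Fourier multiplier on $\mathbb{T}^{d}$ studied in \cite{MR4175747}. I would partition the dyadic scales $\mathbb{D}$ into small scales $N\leq d^{c}$ and large scales $N>d^{c}$ for an appropriate constant $c>0$, and treat each regime separately.

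On the large-scale regime, decompose
$$\mathcal{M}_{N}^{B} \;=\; \mathbf{K}_{N} \;+\; \mathcal{R}_{N},$$
where $\mathbf{K}_{N}$ is a discretisation of the continuous average $\mathbf{M}_{N}^{B}$ whose maximal function inherits the dimension-free bound from Hong's semi-commutative continuous result \cite{MR3275742}, and $\mathcal{R}_{N}$ is a Fourier-multiplier error term. The dimension-free oscillatory-sum estimates of \cite{MR4175747} give geometric decay $\|\mathcal{R}_{N}\|_{\ell_{2}(\mathbb{Z}^{d})\to\ell_{2}(\mathbb{Z}^{d})}\lesssim 2^{-\varepsilon n}$ for $N=2^{n}>d^{c}$, and since these scalar operator-norm bounds tensor trivially with $\mathrm{Id}_{L_{2}(\mathcal{M})}$, the contribution of the error can be dominated at $p=2$ by the column and row square functions of $(\mathcal{R}_{N}f)_{N}$, which sum to a dimension-free constant thanks to the geometric decay. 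For the small-scale regime, the lattice-point-counting arguments of \cite{MR4175747} operate purely at the scalar level and extend coordinate-wise to operator-valued functions, yielding the required dimension-free $L_{2}(\mathcal{N};\ell_{\infty})$ bound by controlling the finitely many remaining scales by the square function.

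The principal obstacle is to ensure that the scalar-to-operator-valued transferences genuinely preserve the dimension-free constants when read through the Pisier $L_{p}(\mathcal{N};\ell_{\infty})$ norm. In particular, the reduction from the maximal norm $\sup_{N}$ to a column/row square function is less straightforward than in the scalar case and requires the noncommutative analogue of Stein's inequality, while the transference from $\mathbb{T}^{d}$ to $\mathbb{R}^{d}$ must be verified to be compatible with the $L_{2}(\mathcal{M})$-tensor structure in order to invoke Hong's inequality (\ref{250224.1}) with its dimension-free constant. Once the $p=2$ estimate is secured with a constant independent of $d$, complex interpolation against the trivial $p=\infty$ endpoint closes the argument for all $p\in[2,\infty]$.
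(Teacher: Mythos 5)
Your overall blueprint (reduce to $p=2$ by interpolation against the trivial $p=\infty$ endpoint, split scales, dominate error terms by column/row square functions via the noncommutative $\ell_\infty\hookleftarrow\ell_2$ domination, transfer to Hong's continuous inequality for large scales) is broadly in the spirit of the paper's proof, but there is a genuine gap at the heart of the small--to--intermediate regime. You propose controlling the scales $N\leq d^{c}$ ``by the square function,'' but the \emph{full} family $(\mathcal{M}^{B}_{N}f)_{N\leq d^{c}}$ has $\sim\log d$ members whose multipliers $\mathfrak{m}^{B}_{N}(\xi)$ are all close to $1$ near $\xi=0$; a direct square-function bound would therefore produce a factor $\sqrt{\log d}$, not a dimension-free constant. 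What is missing is the comparison object: the paper first proves (Proposition~\ref{8.8.2}) a dimension-free maximal inequality for the discrete heat semigroup $P_t=e^{-t\mathcal{L}}$ on $\mathbb{Z}^d$, verified to satisfy the Junge--Xu conditions, and then writes $\mathfrak{m}^{B}_{N}=\lambda^{k}_{N}+(\mathfrak{m}^{B}_{N}-\lambda^{k}_{N})$ where $\lambda^{1}_{N}=\mathfrak{p}_{\kappa(d,N)^2}$. Only the \emph{difference} $\mathfrak{m}^{B}_{N}-\lambda^{k}_{N}$ enjoys the two-sided decay of Proposition~\ref{8.9.3} (small when $N\sqrt{\sum\sin^2(\pi\xi_j)}/\sqrt{d}$ is small \emph{and} small when it is large), which is exactly what makes the Plancherel sum $\sum_{N}|\mathfrak{m}^{B}_{N}(\xi)-\lambda^{k}_{N}(\xi)|^{2}$ uniformly bounded. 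Without subtracting the semigroup piece, the square-function step fails.

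There is a second, related gap in your two-way split $N\lessgtr d^{c}$. The lattice-point comparison and the transfer to the continuous ball average (Lemma~\ref{24128.1} and the nested-ball argument in the proof of Theorem~\ref{8.18.2}) genuinely require $N\gtrsim d$; the multiplier approximation by the semigroup (Proposition~\ref{8.9.3}) genuinely requires $\kappa(d,N)=N/\sqrt{d}\lesssim 1$, i.e.\ $N\lesssim\sqrt{d}$. No single cutoff $d^{c}$ lets both of your mechanisms reach each other, which is precisely why the paper inserts an intermediate regime $\sqrt{d}\lesssim N\lesssim d$ handled by the entirely different multiplier estimates of Propositions~\ref{8.9.1} (near the origin) and~\ref{8.9.2} (away from it). Your description of the large-scale step as ``a Fourier-multiplier error term with geometric decay'' is also not what happens there: the paper's large-scale argument is a pointwise geometric domination $\mathcal{M}^{B}_{N}f(x)\lesssim\int_{x+Q_{1/2}}\mathbf{M}^{B}_{N_2}F(y)\,dy$ followed by Hong's continuous result and the operator H\"older inequality~\eqref{3802}, with no multiplier decay involved. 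These are not presentation quibbles: filling the intermediate regime and introducing the semigroup comparison are essential, irreducible parts of any proof along these lines.
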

  In particular, when the radius is sufficiently large, the range of $p$ can be improved.
   \begin{theorem}\label{8.18.2}
   	Let $c_{3}>0$ and define $\mathbb{D}_{c_{3},\infty}=\left\{N \in\mathbb{D}: N\geq c_{3}d\right\}$. Then  for every $f\in L_{p}(\mathcal{N})$ with $1<p<\infty$, there exists a constant $C_{p}>0$  independent of  the dimension  $d$ such that 
   	\begin{align*}
   		\Big\|\sup_{N\in D_{c_{3},\infty}}\mathcal{M}_{N}^{B}f\Big\|_{L_{p}(\mathcal{N})}\leq C_{p}\|f\|_{L_{p}(\mathcal{N})}.
   	\end{align*}
   \end{theorem}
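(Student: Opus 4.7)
Since $\mathbb{D}_{c_{3},\infty}\subseteq\mathbb{D}$, Theorem \ref{8.17.1} immediately supplies the bound on the range $2\leq p<\infty$, so the substantive content is the extension to $1<p<2$. For this, my plan is to majorize the discrete dyadic maximal operator at scales $N\geq c_{3}d$ by the continuous semi-commutative Hardy-Littlewood maximal operator over Euclidean balls, which is dimension-free on $L_{p}(L_{\infty}(\mathbb{R}^d)\overline{\otimes}\mathcal{M})$ for every $1<p<\infty$ by Hong's theorem \cite{MR3275742} recalled in the introduction.

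Concretely, I would embed $f\in L_{p}(\mathcal{N})$ isometrically into $L_{p}(L_{\infty}(\mathbb{R}^d)\overline{\otimes}\mathcal{M})$ via the piecewise-constant lift
\begin{align*}
\tilde{f}(y)=f(n),\qquad y\in n+\big[-\tfrac{1}{2},\tfrac{1}{2}\big)^d,\ n\in\mathbb{Z}^d,
\end{align*}
and then establish, for every $x\in\mathbb{Z}^d$ and $N\in\mathbb{D}_{c_{3},\infty}$, an operator-pointwise majorization
\begin{align*}
\mathcal{M}_{N}^{B}f(x)\leq C\,\mathbf{M}_{N+\sqrt{d}}^{B}\tilde{f}(x)
\end{align*}
with an absolute constant $C$. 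Geometrically this rests on two facts: (i) the union of unit cubes centered at lattice points in $B_{N}\cap\mathbb{Z}^d$ is contained in the slightly enlarged Euclidean ball $B_{N+\sqrt{d}}$, turning the discrete sum into an integral over $B_{N+\sqrt{d}}$; and (ii) in the regime $N\geq c_{3}d$, the cardinality $|B_{N}\cap\mathbb{Z}^d|$ is comparable, uniformly in $d$, to $|B_{N+\sqrt{d}}|$, since the boundary layer of thickness $O(\sqrt{d})$ carries only a dimension-free fraction of the volume once the radius dominates $d$. Granted this majorization, I would take the supremum over $N\in\mathbb{D}_{c_{3},\infty}$ and invoke Hong's continuous bound together with the isometry of the lift to conclude
\begin{align*}
\Big\|\sup_{N\in\mathbb{D}_{c_{3},\infty}}\mathcal{M}_{N}^{B}f\Big\|_{L_{p}(\mathcal{N})}\lesssim \Big\|\sup_{t>0}\mathbf{M}_{t}^{B}\tilde{f}\Big\|_{L_{p}(L_{\infty}(\mathbb{R}^d)\overline{\otimes}\mathcal{M})}\lesssim \|f\|_{L_{p}(\mathcal{N})}.
\end{align*}

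The hard part will be upgrading the pointwise majorization from a scalar statement to an inequality between positive operators within the $L_{p}(\ell_{\infty})$ framework, since one cannot manipulate absolute values of noncommutative objects as in the scalar case. The remedy is that $\mathcal{M}_{N}^{B}$ and $\mathbf{M}_{t}^{B}$ act positively and linearly on the spatial variable while commuting with $\mathcal{M}$, so the required domination reduces—via the definition of $\|\cdot\|_{L_{p}(\ell_{\infty})}$ recalled in Definition \ref{250210.1}—to the purely geometric, kernel-level comparison above. A secondary technical point is to verify (ii), the uniform lattice-count estimate $|B_{N}\cap\mathbb{Z}^d|\asymp|B_{N+\sqrt{d}}|$ for $N\geq c_{3}d$; this should be a mild adaptation of the counting arguments already invoked in the proof of Theorem \ref{8.17.1} (and ultimately in \cite{MR4175747}), and becomes dimension-free precisely when $N$ dominates $d$.
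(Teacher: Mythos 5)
Your overall strategy---lift $f$ to a piecewise-constant $\tilde f$ on $\mathbb{R}^d$, dominate the discrete averaging operator at large radius by a slightly dilated continuous one, and import Hong's dimension-free continuous bound---is the same one the paper follows, and the geometric ingredients you list (enlargement of $B_N$ to absorb a unit cube, comparability of $|B_N\cap\mathbb{Z}^d|$ with $|B_N|$ once $N\gtrsim d$) are correct and are precisely the content of the paper's Lemma \ref{24128.1} and formula \eqref{250312.1}. However, there is a genuine gap at the final step. Your claimed chain
\begin{align*}
\Big\|\sup_{N\in\mathbb{D}_{c_{3},\infty}}\mathcal{M}_{N}^{B}f\Big\|_{L_{p}(\mathcal{N})}\lesssim \Big\|\sup_{t>0}\mathbf{M}_{t}^{B}\tilde{f}\Big\|_{L_{p}(L_{\infty}(\mathbb{R}^d)\overline{\otimes}\mathcal{M})}
\end{align*}
does not follow from the operator-pointwise majorization $\mathcal{M}_{N}^{B}f(x)\leq C\,\mathbf{M}_{N+\sqrt d}^{B}\tilde f(x)$ at lattice points $x$. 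If $G\in L_p(L_\infty(\mathbb{R}^d)\overline\otimes\mathcal{M})_+$ is the dominating operator produced by Hong's theorem, you only get $\mathcal{M}_N^B f(x)\le C\,G(x)$ for $x\in\mathbb{Z}^d$, and there is no reason for $\|G|_{\mathbb{Z}^d}\|_{\ell_p(\mathbb{Z}^d;L_p(\mathcal{M}))}$ to be controlled by $\|G\|_{L_p(L_\infty(\mathbb{R}^d)\overline\otimes\mathcal{M})}$: restriction to a measure-zero lattice does not bound counting-measure $\ell_p$ by Lebesgue-measure $L_p$, and $G$ is not under your control beyond its norm. The isometry of the lift applies to $\tilde f$, not to the dominating function.

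This is exactly why the paper does not stop at the single enlargement $N\mapsto N_1=(N^2+d/4)^{1/2}$ but performs a second one, $N_1\mapsto N_2=(N_1^2+d/4)^{1/2}$, to obtain in \eqref{250209.5} the crucial \emph{smoothed} domination
\begin{align*}
\mathcal{M}_{N}^{B}f(x)\leq 4C_{2}e^{\frac{1}{4C_{1}^2}}\int_{x+Q_{\frac12}}\mathbf{M}_{N_{2}}^{B}F(y)\,dy,\qquad x\in\mathbb{Z}^d,
\end{align*}
that is, an average over the unit cube centered at $x$ rather than a point evaluation. Dominating $\mathbf{M}_{N_2}^B F$ by $G$, one then has $\mathcal{M}_N^B f(x)\le C\int_{x+Q_{1/2}}G(y)\,dy$, and the operator-valued H\"older inequality \eqref{3802} gives $\int_{x+Q_{1/2}}G\,dy\le\big(\int_{x+Q_{1/2}}G^p\,dy\big)^{1/p}$; summing in $x$ over $\mathbb{Z}^d$ and applying Fubini recovers exactly $\|G\|_{L_p(L_\infty(\mathbb{R}^d)\overline\otimes\mathcal{M})}$. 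Without this intermediate averaging you cannot close the argument. So the idea is right, but you need one more geometric enlargement and the cube-average/H\"older step to pass from the continuous $L_p$ norm back to the discrete $\ell_p$ norm.
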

%	   The $l_{p}(\mathcal{N})$ norm of the   discrete Hardy-Littlewood maximal operator  should be understood as the  $L_{p}(\mathcal{N};l_{\infty}(\mathbb{D}))$ norm of the sequence $(\mathcal{M}^{B}_{N}f)_{N\in \mathbb{D}}$ in the noncommutative case,  denoted by $\|\sup_{N\in \mathbb{D}}\mathcal{M}^{B}_{N}f\|_{l_{p}( \mathcal{N})}$; we  refer  the reader  to Definition \ref{250210.1} in Section \ref{241223.3} for more details. 

		As  applications of Theorem \ref{8.17.1}, we study  the    maximal ergodic inequality  in the semi-commutative setting.   Let $(X,\mu)$ be a $\sigma$-finite measure space, and  consider the  tensor von Neumann algebra $\mathcal{N}_{0}=L_{\infty}(X)\overline{\otimes} \mathcal{M}$ equipped with the trace $\mu\otimes \tau$. Denoted by $L_{p}(\mathcal{N}_{0})$  the semi-commutative space associated with the pairs $(\mathcal{N}_{0}, \mu\otimes \tau)$. Recall that $(\mathcal{N}_{0}, \mu\otimes \tau,\mathbb{Z}^d,\alpha)$ is  a	$W^{*}$-dynamical system if $\alpha:\mathbb{Z}^d \rightarrow Aut(\mathcal{N}_{0})$  is a   trace-preserving group homomorphism   in the weak   $*$-topology. For any $v\in\mathbb{Z}^d$, $\alpha(v)$ extends
		to an isometric  of $L_{p}(\mathcal{N}_{0})$ for all $1\leq p<\infty$, which is still
		denoted by  $\alpha$. Given $t>0$,  the  Hardy-Littlewood ergodic averaging operator over $G_{t}\cap\mathbb{Z}^d$  is defined by 
	\begin{align*}
		\mathcal{A}^{G}_{t}f(x)=\frac{1}{|G_{t}\cap\mathbb{Z}^d|}\sum_{y\in G_{t}\cap\mathbb{Z}^d}\alpha(y)f(x),\quad f\in L_1(\mathcal{N}_{0}).
	\end{align*}
 
  	\begin{corollary}\label{1213.1}
  			\begin{itemize}
  			\item [(i)]Let $ 2\leq p<\infty$ and $f\in L_p(\mathcal{N}_{0})$, there exists  a constant  $C_{p}>0$  independent of the dimension  $d$ such that  
  			\begin{align*}
  				\Big\|\sup_{N\in \mathbb{D}}\mathcal{A}_{N}^{B}f\Big\|_{L_p(\mathcal{N}_{0})}\leq C_{p}\|f\|_{L_p(\mathcal{N}_{0})}.
  			\end{align*}
  				\item [(ii)]  Let $ 1<p<\infty$ and $f\in L_p(\mathcal{N}_{0})$, there exists  a constant  $C_{p}>0$  independent of the dimension  $d$ such that  
  				\begin{align*}
  					\Big\|\sup_{N\in D_{c_{3},\infty}}\mathcal{A}_{N}^{B}f\Big\|_{L_p(\mathcal{N}_{0})}\leq C_{p}\|f\|_{L_p(\mathcal{N}_{0})}.
  				\end{align*}
  			\end{itemize}  
   \end{corollary}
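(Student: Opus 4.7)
The plan is to deduce both parts of Corollary \ref{1213.1} from the corresponding discrete maximal inequalities (Theorem \ref{8.17.1} for (i), Theorem \ref{8.18.2} for (ii)) via a semi-commutative version of the Calder\'on transference principle. The key observation is that $\mathcal{N}_0 = L_\infty(X) \overline{\otimes} \mathcal{M}$ is itself a von Neumann algebra equipped with the normal semifinite faithful trace $\mu \otimes \tau$, so Theorems \ref{8.17.1} and \ref{8.18.2} apply directly to $\widetilde{\mathcal{N}} := \ell_\infty(\mathbb{Z}^d) \overline{\otimes} \mathcal{N}_0$ (i.e.\ with $\mathcal{N}_0$ playing the role of $\mathcal{M}$) with a dimension-free constant. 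The remaining task is to transfer this bound from $\widetilde{\mathcal{N}}$ back to $L_p(\mathcal{N}_0)$ using the $W^*$-dynamical structure.

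Fix $f \in L_p(\mathcal{N}_0)$ and a finite subset $\mathcal{F} \subset \mathbb{D}$ (resp.\ $\mathcal{F} \subset \mathbb{D}_{c_3,\infty}$), and set $K = \max \mathcal{F}$. For each $R \in \mathbb{D}$ with $R \geq 2K$, define $F \in L_p(\widetilde{\mathcal{N}})$ by $F(u) := \chi_{Q_R}(u)\,\alpha(u)f$, $u \in \mathbb{Z}^d$. Since $\alpha(u)$ is an isometry on $L_p(\mathcal{N}_0)$, one has $\|F\|_{L_p(\widetilde{\mathcal{N}})}^p = |Q_R|\,\|f\|_{L_p(\mathcal{N}_0)}^p$. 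The core translation identity is: for $v \in Q_{R-K}$, $N \in \mathcal{F}$, and $y \in B_N \cap \mathbb{Z}^d \subset Q_K$, the vector $v-y$ lies in $Q_R$, so that, using the symmetry of $B$ and the group-homomorphism property of $\alpha$,
\[
\mathcal{M}_N^B F(v) \;=\; \frac{1}{|B_N \cap \mathbb{Z}^d|} \sum_{y \in B_N \cap \mathbb{Z}^d} \alpha(v-y)f \;=\; \alpha(v)\bigl(\mathcal{A}_N^B f\bigr).
\]

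Now apply the dimension-free discrete maximal inequality on $\widetilde{\mathcal{N}}$ to $F$: for any $\varepsilon > 0$ there exist $U, V \in L_{2p}(\widetilde{\mathcal{N}})$ and $(Y_N)_{N\in\mathcal{F}} \subset L_\infty(\widetilde{\mathcal{N}})$ with $\sup_N \|Y_N\|_\infty \leq 1$ such that $\mathcal{M}_N^B F = U Y_N V$ and $\|U\|_{L_{2p}(\widetilde{\mathcal{N}})}\,\|V\|_{L_{2p}(\widetilde{\mathcal{N}})} \leq (C_p+\varepsilon)\,|Q_R|^{1/p}\,\|f\|_{L_p(\mathcal{N}_0)}$. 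Evaluating at any $v \in Q_{R-K}$ and composing with $\alpha(-v)$, an isometric $*$-homomorphism on every $L_q$-scale of $\mathcal{N}_0$, produces for each such $v$ a scalar decomposition $\mathcal{A}_N^B f = \bigl(\alpha(-v)U(v)\bigr)\bigl(\alpha(-v)Y_N(v)\bigr)\bigl(\alpha(-v)V(v)\bigr)$, with $\sup_N \|\alpha(-v)Y_N(v)\|_\infty \leq 1$ and $\|\alpha(-v)U(v)\|_{L_{2p}(\mathcal{N}_0)} = \|U(v)\|_{L_{2p}(\mathcal{N}_0)}$ (similarly for $V$). Exploiting $\|U\|_{L_{2p}(\widetilde{\mathcal{N}})}^{2p} = \sum_v \|U(v)\|_{L_{2p}(\mathcal{N}_0)}^{2p}$ together with Cauchy--Schwarz, a pigeonhole argument over $v \in Q_{R-K}$ produces some $v_*$ satisfying
\[
\|U(v_*)\|_{L_{2p}(\mathcal{N}_0)}\,\|V(v_*)\|_{L_{2p}(\mathcal{N}_0)} \;\leq\; |Q_{R-K}|^{-1/p}\,\|U\|_{L_{2p}(\widetilde{\mathcal{N}})}\,\|V\|_{L_{2p}(\widetilde{\mathcal{N}})}.
\]
By the defining infimum of the $L_p(\mathcal{N}_0;\ell_\infty)$-norm (Definition \ref{250210.1}) this yields $\bigl\|(\mathcal{A}_N^B f)_{N \in \mathcal{F}}\bigr\|_{L_p(\mathcal{N}_0;\ell_\infty)} \leq (C_p+\varepsilon)\,(|Q_R|/|Q_{R-K}|)^{1/p}\,\|f\|_{L_p(\mathcal{N}_0)}$. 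Letting $R \to \infty$ (so that $|Q_R|/|Q_{R-K}| \to 1$), then $\varepsilon \to 0$, and finally passing to the supremum over finite $\mathcal{F}$ gives both conclusions.

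The main obstacle is not the transference at the classical level but rather the extraction of a \emph{single} scalar factorization of $\mathcal{A}_N^B f$ from the operator-valued one produced by the maximal inequality on $\widetilde{\mathcal{N}}$: in Pisier's $L_p(\mathcal{N};\ell_\infty)$ formalism one cannot simply ``evaluate a supremum pointwise,'' and the Cauchy--Schwarz pigeonhole step over $v \in Q_{R-K}$ is the genuinely noncommutative ingredient replacing the trivial pointwise integration of $|\cdot|^p$ that suffices in the classical Calder\'on argument.
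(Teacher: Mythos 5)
Your proposal is correct, and it takes a genuinely different route from the paper. The paper's proof (Proposition \ref{1103.3}) keeps the base algebra $\mathcal{M}$ fixed and applies the discrete maximal theorem fiberwise: for each $x\in X$ it applies the hypothesis to $\phi_x(\cdot)=(\alpha(\cdot)f)(x)\in L_p(\mathcal{N})$, having first reduced to positive $f$ so that the averages are selfadjoint and the order-theoretic characterization \eqref{1223.3} of the $L_p(\cdot;\ell_\infty)$-norm applies; the dominating operator $G_x$ is then integrated in $x$. This requires (implicitly) a measurable choice $x\mapsto G_x$, which is a standard but non-trivial point that the paper passes over. You instead replace the base algebra $\mathcal{M}$ by $\mathcal{N}_0$ in Theorems \ref{8.17.1}/\ref{8.18.2} (legitimate, since they are stated for an arbitrary $(\mathcal{M},\tau)$), obtain one global factorization $\mathcal{M}_N^B F = UY_NV$ on $\widetilde{\mathcal{N}}=\ell_\infty(\mathbb{Z}^d)\overline{\otimes}\mathcal{N}_0$, and transfer it to a single lattice point via a Cauchy--Schwarz pigeonhole. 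This avoids both the positivity reduction and the measurability issue, and is arguably the cleaner formulation of Calder\'on transference in Pisier's framework; the paper's version has the merit of staying within the space $L_p(\mathcal{N})$ that already appears in the theorems and of deriving the bound via the more concrete positive-dominant description.

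Two minor points worth tightening in your write-up. First, normalize the factorization so that $\sup_N\|Y_N\|_\infty=1$ before pigeonholing (this is always possible and is what makes $\|U\|\,\|V\|$ alone control the maximal norm). Second, the ratios should use lattice-point counts $|Q_R\cap\mathbb{Z}^d|/|Q_{R-K}\cap\mathbb{Z}^d|=(2R+1)^d/(2(R-K)+1)^d$ rather than Lebesgue measures, though the limit as $R\to\infty$ is still $1$; and the final passage from finite $\mathcal{F}\subset\mathbb{D}$ to all of $\mathbb{D}$ should cite the standard fact that $\|\cdot\|_{L_p(\mathcal{N}_0;\ell_\infty(\Lambda))}$ is the supremum of the norms over finite subsets of $\Lambda$ (cf.\ Junge--Xu \cite{MR2276775}).
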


%The  $L_{p}$ norm of the  ergodic Hardy-Littlewood   maximal operator is understood as the $L_{p}(\mathcal{N}_{0};\,l_{\infty}(\mathbb{R}))$ norm of  the sequence $(\mathcal{A}^{G}_{t}f)_{t>0}$ in the noncommutative case.

As usual,  above maximal inequalities   imply the noncommutative analogue of  almost everywhere convergence. More precisely, define a  fixed point
space of $(\alpha(v))_{v\in\mathbb{Z}^d}$, i.e.,
\begin{align*}
	F_{0}=\left\{ x\in\mathcal{N}_{0}:\alpha(v)x=x, \quad \forall \,v\,\in \mathbb{Z}^d\right\},
\end{align*}
 which  is a
von Neumann subalgebra of $\mathcal{N}_{0}$. There exists a unique conditional expectation $F: \mathcal{N}_{0} \rightarrow F_{0}$. Moreover, for $1<p<\infty$, this conditional expectation extends naturally from $L_{p}( \mathcal{N}_{0})$ to $L_{p}({F}_{0})$ and is still denoted by $F$. 
\begin{corollary}\label{250313.3}
	Let $ 1<p<\infty$ and $f\in L_p(\mathcal{N}_{0})$,    we have
	\begin{align}\label{250716.2}
		\mathcal{A}_{N}^{B}f\xrightarrow{b.a.u}Ff \quad \mbox{as}\quad  N\rightarrow \infty.
	\end{align}
\end{corollary}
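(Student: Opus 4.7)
The plan is to deduce (\ref{250716.2}) from Corollary \ref{1213.1} via the standard noncommutative Banach principle: a maximal inequality combined with b.a.u.\ convergence on a dense subspace yields b.a.u.\ convergence on the full space, with the conditional expectation $F$ identified as the mean-ergodic limit.

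For the dense subspace, I would use the mean ergodic decomposition $L_p(\mathcal{N}_0)=L_p(F_0)\oplus \overline{W}^{L_p}$, where $W$ is the linear span of coboundaries $\alpha(v)g-g$ with $v\in\mathbb{Z}^d$ and $g\in\mathcal{N}_0\cap L_p(\mathcal{N}_0)$. On $L_p(F_0)$, $\mathcal{A}^B_N f=f=Ff$ trivially. On $W$ one computes
\[
\mathcal{A}^B_N(\alpha(v)g-g)=\frac{1}{|B_N\cap\mathbb{Z}^d|}\Big(\sum_{z\in((B_N+v)\setminus B_N)\cap\mathbb{Z}^d}\alpha(z)g-\sum_{y\in(B_N\setminus(B_N+v))\cap\mathbb{Z}^d}\alpha(y)g\Big),
\]
so $\|\mathcal{A}^B_N(\alpha(v)g-g)\|_{\mathcal{N}_0}\to 0$ as $N\to\infty$ (with $d$ and $v$ fixed), since the symmetric difference contains $O_{d,v}(N^{d-1})$ lattice points while $|B_N\cap\mathbb{Z}^d|\asymp_d N^d$. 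Operator-norm convergence implies b.a.u.\ convergence. Density of $L_p(F_0)+W$ in $L_p(\mathcal{N}_0)$ is classical and follows from the $L_p$ mean ergodic theorem for each unitary $\alpha(v)$.

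The transfer step is the technical heart. Using the maximal inequality of Corollary \ref{1213.1} in the Pisier norm $L_p(\mathcal{N}_0;\ell_\infty)$ together with $L_p$-approximation of $f$ by elements $f_k$ from the dense subspace, one produces, for each $\varepsilon>0$, a projection $e_\varepsilon\in\mathcal{N}_0$ with $(\mu\otimes\tau)(\mathbf{1}-e_\varepsilon)<\varepsilon$ satisfying $\|e_\varepsilon(\mathcal{A}^B_N f-Ff)e_\varepsilon\|_{\mathcal{N}_0}\to 0$; this is the by-now standard noncommutative Banach principle (see the arguments in \cite{MR3275742,hong2019pointwise}). When $1<p<2$, Corollary \ref{1213.1} only covers $N\in\mathbb{D}_{c_{3},\infty}$, but the finitely many dyadic $N<c_{3}d$ each define $L_p$-contractions, contributing only a dimension-dependent bounded maximal term, which still suffices for convergence. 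To reach a general $N\to\infty$ outside $\mathbb{D}$, I would sandwich $N\in[2^k,2^{k+1})$ between $\mathcal{A}^B_{2^k}$ and $\mathcal{A}^B_{2^{k+1}}$ via the crude bound $|B_{2^{k+1}}\cap\mathbb{Z}^d|/|B_{2^k}\cap\mathbb{Z}^d|\lesssim 2^d$, which is dimension-dependent but irrelevant once $d$ is fixed.

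The main obstacle I expect is the Banach-principle step for \emph{bilateral} a.u.\ convergence in the semi-commutative Pisier framework when $1<p<2$: one must carefully combine the dimension-free maximal estimate over $\mathbb{D}_{c_{3},\infty}$ with the small-$N$ contractions and the $L_p$-approximation so that a single pair of left/right projections simultaneously controls the tail and the approximation error.
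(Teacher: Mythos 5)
Your strategy is essentially the paper's: combine the maximal ergodic inequality (Corollary~\ref{1213.1}, via Proposition~\ref{1103.3}) with a dense class of coboundaries on which the averages visibly converge, identify the limit as $F$, and close the argument by a noncommutative Banach principle. The paper packages this as Proposition~\ref{1227.2}. Your identification of the dense class via coboundaries $\alpha(v)g-g$, the symmetric-difference estimate giving $\|\mathcal{A}_N^B(\alpha(v)g-g)\|_\infty\to 0$, and the observation that only the tail $N\ge c_3 d$ matters for convergence (so Corollary~\ref{1213.1}(ii) suffices for all $1<p<\infty$) all match the paper's structure.

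Where your write-up has a genuine gap is exactly at the step you flag as ``the technical heart,'' and the paper's resolution is worth noticing. You treat the Banach principle as ``maximal inequality $+$ b.a.u.\ convergence on a dense subspace $\Rightarrow$ b.a.u.\ convergence everywhere,'' justified by operator-norm decay of $\mathcal{A}_N^B f_k$. But in the Pisier/Junge framework the closure argument is performed inside $L_p(\mathcal{N}_0;c_0)$, which is the closed subspace of $L_p(\mathcal{N}_0;\ell_\infty)$ whose elements admit factorizations $x_n = a y_n b$ with $a,b\in L_{2p}$ and $\|y_n\|_\infty\to 0$; membership there is \emph{strictly stronger} than b.a.u.\ convergence to $0$, and it is only $L_p(\mathcal{N}_0;c_0)$-membership that is preserved under the $L_p(\ell_\infty)$-approximation you invoke. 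Operator-norm decay alone does not give $c_0$-membership (take $a=b=\mathbf{1}$: these are not in $L_{2p}$ unless the trace is finite). The paper bridges this in (\ref{250710.1})--(\ref{250710.2}): interpolating the $L_q$ maximal inequality ($q<p$) against the $L_\infty$ decay of $\mathcal{A}_t^G f_k$ to show $\|\sup_{s_1<t<s_2}\mathcal{A}_t^G f_k\|_{L_p(\mathcal{N}_0)}\to 0$ as $s_1\to\infty$, which is exactly what approximates $(\mathcal{A}_t^G f_k)_t$ by truncated (hence $c_0$) sequences in $L_p(\mathcal{N}_0;\ell_\infty)$. Without this interpolation the closure step is unjustified. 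Also note: the paper does not discuss extending to non-dyadic $N$; Corollary~\ref{250313.3} is for $N\in\mathbb{D}$, so your sandwiching step is extra (and harmless, but not needed).
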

 \noindent Here, b.a.u. denotes the bilaterally almost uniform convergence. For further details, the reader is referred to Definition \ref{250121.1} in Section \ref{241223.3}

	The rest of paper is organized as follows.  Section \ref{241223.3} reviews  the noncommutative  $L_{p}$ space,   vector-valued noncommutative $L_{p}$ space and noncommutative pointwise convergence. In Section \ref{25113.1},  we prove Theorem \ref{8.17.1}, which provides the dimension-free estimates for the semi-commutative discrete dyadic Hardy-Littlewood maximal operators over the ball on $L_{p}(\mathcal{N})$ for $2\leq p\leq\infty$.  In particular, when the radius is sufficiently large, these operators admit dimension-free $L_{p}$ bounds for all $1<p<\infty$, as shown in Theorem \ref{8.18.2}.  The  final  section introduces  transference principles in the ergodic setting. As applications,  we obtain the maximal ergodic inequalities  and   bilaterally almost uniform convergence    (\ref{250716.2}), stated in Corollary \ref{1213.1} and Corollary \ref{250313.3}, respectivelys

	\vspace{0.1cm}
	
	\textbf{Notation}
	We denote $C$ as a positive  constant independent of the essential variables, not necessarily the same one in each	occurrence. Letter  $C(p,d)$ is a positive constant depending on the parameters $d$ and $p$.  $X\lesssim Y$  means $X\leq C Y$ for some positive constant $C$.  $\chi_{A}$  is the indicator function of a set $A$  and $\textbf{1}=(1,\cdots,1)$.    The torus $\mathbb{T}^d$ is a priori endowed with the periodic norm: 
	\begin{align*}
		||x||= \Big(\sum_{j=1}^d ||x_j||^2\Big)^{1/2},
	\end{align*}
	where $||x_j|| = \operatorname{dist}(x_j, \mathbb{Z})$.  We identify the $d$-dimensional torus $\mathbb{T}^d$  with the unit cube  $Q = [-{1}/{2},  {1}/{2})^d$. Note that for $\xi\in\mathbb{T}^d=[-\frac{1}{2},\frac{1}{2})^d$,  $\|\xi\|$ coincides with Euclidean norm $|\xi|$. For a function $f$ on $\mathbb{Z}^d$, its   Fourier transform   $\widehat{f}$ is given by:
	\begin{align*}
	 \widehat{f}(\xi)=\sum_{x\in\mathbb{Z}^d}f(x)e^{-2\pi \mathrm{i} \langle x,\xi \rangle},\quad \xi\in\mathbb{T}^d.
	\end{align*}
    For a function $f$ on $\mathbb{T}^d$, its inverse Fourier transform   $\mathcal{F}^{-1}f$   is defined by:
    \begin{align*}
    	(\mathcal{F}^{-1}f)(x)=\int_{\mathbb{T}^{d}}f(\xi)e^{2\pi \mathrm{i} \langle x,\xi \rangle}d\xi,\quad x\in\mathbb{Z}^d.
    \end{align*}
	 
\vspace{0.1cm}

	\section{Preliminaries}\label{241223.3}
	\textbf{2.1. Noncommutative $L_{p}$ space.} Let $\mathcal{M}$ be a von Neumann algebra equipped with a normal semifinite faithful trace $\tau$. Define 
	  $\mathcal{S}_{+}(\mathcal{M})$  as the set of  positive elements $x\in\mathcal{M}$ for which $\tau(s(x)) <\infty$, where  $s(x)$ is  the smallest projection $e$ satisfying $exe=x$. Denoted by   $\mathcal{S}(\mathcal{M})$ the linear span of  $\mathcal{S}_{+}(\mathcal{M})$.  For $1\leq p<\infty$, we define
	\begin{align*}
		\|x\|_{L_{p}(\mathcal{M})}=\big(\tau(|x|^p)\big)^{\frac{1}{p}},\quad x\in \mathcal{S}(\mathcal{M}),
	\end{align*}
	where $|x|=(x^{*}x)^{\frac{1}{2}}$ is the modulus of $x$. The pair $(\mathcal{S}(\mathcal{M}),\|\cdot\|_{L_{p}(\mathcal{M})})$ forms a normed space, whose completion is the noncommutative $L_{p}$ space associated with $(\mathcal{M}, \tau )$, denoted  by $L_{p}(\mathcal{M})$. For convenience, we set $L_{\infty}(\mathcal{M})=\mathcal{M}$ equipped with the operator norm $\|\cdot\|_{\mathcal{M}}$. We denote by $L_{p}(\mathcal{M})_{+}$
	 the positive part of  $L_{p}(\mathcal{M})$. 
	
	In this paper, we are interested in   the tensor von
	Neumann algebra $\mathcal{N}=\ell_{\infty}(\mathbb{Z}^d)\overline{\otimes}\mathcal{M}$ equipped with the tensor trace $\sum\otimes\tau$. The semi-commutative space associated with the pairs $(\mathcal{N},\sum\otimes\tau)$ is denoted by  $L_{p}(\mathcal{N})$. It is well-known that $L_{p}(\mathcal{N})$   isometrically coincides with $L_{p}\big(\mathbb{Z}^d;L_{p}(\mathcal{M})\big)$,  the Bochner  $L_{p}$ space on $\mathbb{Z}^d$  with values in $L_{p}(\mathcal{M})$. Given $f\in L_{2}(\mathcal{N})$, the   operator-valued   Plancherel formula  
	\begin{align}\label{250609.1}
		\|f\|_{L_{2}(\mathcal{N})}=\|\widehat{f}\,\|_{L_{2}(  L_{\infty}(\mathbb{T}^d)\overline{\otimes}\mathcal{M})} 
	\end{align}
	is essential for us, which is a consequence of the fact that  $L_{2}(\mathcal{N})$ is a Hilbert space.  Let  $(\Sigma,\nu)$ be a measure space, and fix $1<p<\infty$.  The following operator-valued  H\"{o}lder inequality (see \cite[Lemma 2.4]{hong2022noncommutative}) will be   used: 
	\begin{align}\label{3802}
		\int_{\Sigma}f(x)g(x)d\nu(x)\leq \bigg(\int_{\Sigma}f(x)^{p}d\nu(x)\bigg)^{\frac{1}{p}}\bigg(\int_{\Sigma}g(x)^{p'}d\nu(x)\bigg)^{\frac{1}{p'}},
	\end{align}
	where $f:\Sigma\rightarrow L_{1}(\mathcal{M})+L_{\infty}(\mathcal{M})$  and $g:\Sigma\rightarrow \mathbb{C}$ are positive functions  such that  all the integrals in (\ref{3802}) make sense. Here $\leq$ is understood as the partial order in the positive cone of $\mathcal{ {M}}$.
	
	\vspace{0.2cm}
	
	\noindent\textbf{2.2. Vector-valued noncommutative $L_{p}$ spaces.} The  noncommutative  maximal norm introduced by Pisier \cite{MR1648908} and Junge \cite{MR1916654}  is the fundamental object of this paper.
	\begin{definition}\label{250210.1}
			Let $1\leq p\leq\infty$. $L_{p}( \mathcal{M};\ell_{\infty})$ is the space of all sequences $x=(x_{n})_{n\in \mathbb{Z}}$ in $L_{p}(\mathcal{M})$ which admit   factorizations of the following form: there are $a,b\in L_{2p}(\mathcal{M}) $ and a bounded sequence $y=(y_{n})_{n\in \mathbb{Z}}\subset L_{\infty}(\mathcal{M})$ such that $x_{n}=ay_{n}b$ for all  $n\in \mathbb{Z}$.
	        The norm of $x$ in $L_{p}( \mathcal{M};\ell_{\infty})$ is given by
		\begin{align*}
			\|x\|_{L_{p}( \mathcal{M};\ell_{\infty})}=\inf\left\{\|a\|_{L_{2p}(\mathcal{M})}\sup_{ n\in \mathbb{Z}}\|y_{n}\|_{\infty}\|b\|_{L_{2p}(\mathcal{M})}\right\},
		\end{align*}
		where the infimum is taken over all factorizations of $x= (x_{n})_{n\in\mathbb{Z}}=(ay_{n}b)_{n\in\mathbb{Z}}$ as above. 
	\end{definition}
   \noindent The norm   $\|x\|_{L_{p}( \mathcal{M};\ell_{\infty})}$  is conventionally denoted by $\|\sup_{n\in\mathbb{Z}} x_{n}\|_{L_{p}(\mathcal{M})}$. Here $\sup_{n\in\mathbb{Z}} x_{n}$ is just a notation and it does not make sense in the noncommutative setting.  More generally,   Definition \ref{250210.1}  can be extended to the space $L_{p}(\mathcal{M};\ell_{\infty}(\Lambda))$ with any arbitrary  index set $\Lambda$.
	
	To better understand this definition, consider  a sequence of selfadjoint operators $x=(x_{n})_{n\in\mathbb{Z}}$ in $L_{p}(\mathcal{M})$. Then,  $x\in L_{p}( \mathcal{M};\ell_{\infty})$
	 if and only if  there exists a positive operator $a\in L_{p}(\mathcal{M})$ such that $-a\leq  x_{n}\leq a $  for all $n\in\mathbb{Z}$, and moreover,
		\begin{align}\label{1223.3}
		\Big\|\sup_{n\in\mathbb{Z}} x_{n}\Big\|_{L_{p}(\mathcal{M})}=\inf\left\{\|a\|_{L_{p}(\mathcal{M})}:~a\in L_{p}(\mathcal{M})_{+},~-a \leq  x_{n}\leq a, ~\forall \,n\in\mathbb{Z} \right\}.
	\end{align}

	\begin{theorem}[\cite{MR2276775}]\label{0.4}
		Let  $1\leq p_{0}<p<p_{1}\leq\infty$ and $0<\theta<1$ be such that $\frac{1}{p}=\frac{1-\theta}{p_{0}}+\frac{\theta}{p_{1}}$. Then the following complex interpolation holds
		\begin{align*}
			L_{p}(\mathcal{M};\ell_{\infty})=\big(L_{p_{0}}(\mathcal{M};\ell_{\infty}),(L_{p_{1}}(\mathcal{M};\ell_{\infty})\big)_{\theta}\mbox{~ with equal norms}.
		\end{align*}
	\end{theorem}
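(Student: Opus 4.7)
The plan is to establish the equality as a Banach space identification by proving both norm inequalities between $L_p(\mathcal{M};\ell_\infty)$ and the complex interpolation space $(L_{p_0}(\mathcal{M};\ell_\infty), L_{p_1}(\mathcal{M};\ell_\infty))_\theta$, exploiting the factorization characterization from Definition \ref{250210.1} together with Kosaki-type analytic families in the noncommutative $L_p$ scale.

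For the inclusion $L_p(\mathcal{M};\ell_\infty) \hookrightarrow (L_{p_0}(\mathcal{M};\ell_\infty), L_{p_1}(\mathcal{M};\ell_\infty))_\theta$, I would begin with $x = (x_n)$ satisfying $\|x\|_{L_p(\mathcal{M};\ell_\infty)} < 1$ and choose a factorization $x_n = a\, y_n\, b$ with $\|a\|_{L_{2p}(\mathcal{M})}, \|b\|_{L_{2p}(\mathcal{M})} \leq 1$ and $\sup_n \|y_n\|_{\mathcal{M}} \leq 1$. Using the polar decompositions $a = u|a|$ and $b = |b| v$, I set
\[
a(z) = u\, |a|^{p(\frac{1-z}{p_0} + \frac{z}{p_1})}, \qquad b(z) = |b|^{p(\frac{1-z}{p_0} + \frac{z}{p_1})}\, v, \qquad F(z)_n = a(z)\, y_n\, b(z).
\]
The identity $\frac{1}{p} = \frac{1-\theta}{p_0} + \frac{\theta}{p_1}$ forces $F(\theta)_n = x_n$. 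On the boundary $\operatorname{Re}(z) = j$ the outer factors land in $L_{2p_j}(\mathcal{M})$ with norms at most $\|a\|_{L_{2p}}^{p/p_j}$ and $\|b\|_{L_{2p}}^{p/p_j}$, so that $F(j + it)_n$ is a valid factorization certifying $\|F(j + it)\|_{L_{p_j}(\mathcal{M};\ell_\infty)} \leq 1$ by Definition \ref{250210.1}. The three-lines lemma then yields the desired interpolation bound.

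For the reverse inclusion, the cleanest route is duality: when $1 < p_0, p_1 < \infty$ the space $L_q(\mathcal{M};\ell_\infty)$ has predual $L_{q'}(\mathcal{M};\ell_1)$, defined via a sum-factorization of the form $z_n = \sum_k a_{n,k} b_{n,k}$; the forward argument above applied to the $\ell_1$ side produces the interpolation identity for the predual, and Calder\'on's duality theorem $(X_0, X_1)_\theta^{*} = (X_0^{*}, X_1^{*})_\theta$ transports the conclusion to the $\ell_\infty$ side. The main obstacle is the endpoint $p_1 = \infty$, where $L_\infty(\mathcal{M};\ell_\infty)$ fails to be reflexive and duality does not close directly; here I would approximate $p_1$ from below by some $p_1^{(k)} < \infty$, apply the interior result, and pass to a weak-$*$ limit of the resulting factorizations using Banach-Alaoglu compactness of the balls in $L_{2p_1^{(k)}}(\mathcal{M})$ together with lower semicontinuity of the maximal norm (\ref{1223.3}). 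A secondary technical point is ensuring that the analytic family $F(z)$ built in the forward direction takes values in the intersection $L_{p_0}(\mathcal{M};\ell_\infty) \cap L_{p_1}(\mathcal{M};\ell_\infty)$ required for the definition of complex interpolation, which is handled by first truncating via spectral projections of $|a|$ and $|b|$ at finite levels and then taking a norm limit.
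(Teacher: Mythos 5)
The paper does not prove this theorem at all: Theorem~\ref{0.4} is stated as a citation to Junge--Xu \cite{MR2276775}, and no argument is reproduced here. So there is no ``paper's proof'' to compare against; what you have written is a reconstruction of the argument from the literature, and I will assess it on its own merits.

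Your forward inclusion $L_p(\mathcal{M};\ell_\infty)\hookrightarrow (L_{p_0}(\mathcal{M};\ell_\infty),L_{p_1}(\mathcal{M};\ell_\infty))_\theta$ is essentially correct: starting from a factorization $x_n=ay_nb$, the analytic family $F(z)_n=a(z)y_nb(z)$ with $a(z)=u|a|^{p(\frac{1-z}{p_0}+\frac{z}{p_1})}$ and the analogous $b(z)$ satisfies $F(\theta)=x$ and, on the line $\operatorname{Re} z=j$, produces an admissible factorization with outer norms $\|a\|_{L_{2p}}^{p/p_j}$ and $\|b\|_{L_{2p}}^{p/p_j}$; the truncation by spectral projections to land in the intersection space is the right technical fix. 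This is the standard Stein--Weiss/Pisier-type construction and there is no gap in it.

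The reverse inclusion is where the proposal is incomplete. First, the step ``the forward argument above applied to the $\ell_1$ side produces the interpolation identity for the predual'' conflates two structurally different factorizations: $L_q(\mathcal{M};\ell_1)$ is defined via a \emph{sum} decomposition $z_n=\sum_k a_{nk}b_{kn}$, not a two-sided sandwich $ay_nb$, so the analytic family has to be built term-by-term and the convergence of the resulting series of analytic functions needs to be controlled; this requires a separate (if parallel) argument, not a direct transplant. Second, and more seriously, the invocation of Calder\'on duality in the form $(X_0,X_1)_\theta^*=(X_0^*,X_1^*)_\theta$ is only valid under reflexivity-type hypotheses on the couple, and $L_q(\mathcal{M};\ell_\infty)$ is \emph{not} reflexive for any $q$ (already for $\mathcal{M}=\mathbb{C}$ it is $\ell_\infty$). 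What Calder\'on's theorem gives in general, assuming $X_0\cap X_1$ is dense in both spaces, is $(X_0,X_1)_\theta^*=(X_0^*,X_1^*)^\theta$ with the \emph{upper} complex method on the right-hand side, together with a one-sided inclusion $(X_0^*,X_1^*)_\theta\hookrightarrow(X_0^*,X_1^*)^\theta$. You can still run the argument using only these inclusions -- chaining $(L_{p_0}(\ell_\infty),L_{p_1}(\ell_\infty))_\theta\hookrightarrow (L_{p_0}(\ell_\infty),L_{p_1}(\ell_\infty))^\theta=\big((L_{p_0'}(\ell_1),L_{p_1'}(\ell_1))_\theta\big)^*\hookrightarrow L_p(\ell_\infty)$ -- but this must be spelled out; as written the duality step is asserting a false identity. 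The $p_1=\infty$ endpoint approximation you sketch is a reasonable way to close that remaining case once the interior case is fixed, but it depends on the interior case being established first.

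In short: the easy inclusion is right, the overall strategy (predual $\ell_1$ plus duality) is the correct and standard one, but the duality theorem you cite does not apply verbatim to this non-reflexive couple and the $\ell_1$-side interpolation needs its own construction rather than a reference back to the $\ell_\infty$ argument.
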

	
The following  properties of $L_{p}(\mathcal{N};\ell_{\infty})$ are frequently used in this paper. 

\begin{lemma}\label{1220.7}
	Let $1\leq p\leq\infty$ and $f=(f_{n})_{n}\in L_{p}(\mathcal{{N}};\ell_{\infty})$. 
	\begin{itemize}
		\item[(i)] For any fixed $t\in\mathbb{R}^d$,   we have 
		\begin{align*}
			\|\sup_{n}e^{2\pi \mathrm{i}\langle t,\cdot\rangle}f_{n}(\cdot)\|_{L_{p}(\mathcal{N})}=	\|\sup_{n}f_{n}\|_{L_{p}(\mathcal{N})}.
		\end{align*}
		\item[(ii)] Given a bounded sequence  $(\beta_{n})_{n}\in\mathbb{C}$,  we have 
		\begin{align*}
			\|\sup_{n}\beta_{n}f_{n}\|_{L_{p}(\mathcal{N})}\leq 	\sup_{n}|\beta_{n}|\cdot\|\sup_{n}f_{n}\|_{L_{p}(\mathcal{N})}.
		\end{align*}
	\end{itemize}
\end{lemma}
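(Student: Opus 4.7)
The plan is to deduce both statements directly from the factorization definition of the $L_p(\mathcal{M};\ell_\infty)$ norm in Definition \ref{250210.1}, with two simple observations: modulation by $e^{2\pi \mathrm{i}\langle t,\cdot\rangle}$ is left multiplication by a unitary element of $\ell_\infty(\mathbb{Z}^d)\subset\mathcal{N}$, and multiplying a sequence by a bounded scalar sequence $(\beta_n)$ merely rescales the bounded middle factor. No duality, interpolation, or structural theorem is needed; everything follows from rewriting the factorization.

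For part (i), I would fix $\varepsilon>0$ and choose a near-optimal factorization $f_n=ay_nb$ with $a,b\in L_{2p}(\mathcal{N})$ and $(y_n)_n\subset L_\infty(\mathcal{N})$ whose norm product is within $\varepsilon$ of $\|\sup_n f_n\|_{L_p(\mathcal{N})}$. Setting $u(x)=e^{2\pi \mathrm{i}\langle t,x\rangle}$, the element $u$ lies in the abelian subalgebra $\ell_\infty(\mathbb{Z}^d)\subset\mathcal{N}$ and is unitary since $|u(x)|=1$ pointwise, so the rewriting $uf_n=(ua)y_nb$ is a legal factorization of the modulated sequence. Since $(ua)^*(ua)=a^*u^*ua=a^*a$, we have $\|ua\|_{L_{2p}(\mathcal{N})}=\|a\|_{L_{2p}(\mathcal{N})}$; taking the infimum over factorizations of $f$ then yields $\|\sup_n uf_n\|_{L_p(\mathcal{N})}\leq\|\sup_n f_n\|_{L_p(\mathcal{N})}$. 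Applying the same argument with $-t$ in place of $t$ gives the reverse inequality.

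For part (ii), I would again start from a near-optimal factorization $f_n=ay_nb$ and rewrite $\beta_n f_n=a(\beta_n y_n)b$. Since $(\beta_n y_n)_n\subset L_\infty(\mathcal{N})$ with $\sup_n\|\beta_n y_n\|_\infty\leq \sup_n|\beta_n|\cdot\sup_n\|y_n\|_\infty$, this is a valid factorization for $(\beta_n f_n)_n$, and plugging it into Definition \ref{250210.1} produces the stated inequality once the infimum is taken over the original factorizations of $f$.

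The argument has essentially no hard obstacle. The only mildly technical point is justifying that the replacement factors $ua$ (in (i)) and $\beta_n y_n$ (in (ii)) still lie in the correct spaces with the claimed norm bounds, which reduces to the elementary fact that left multiplication by a bounded scalar-valued function on $\mathbb{Z}^d$ acts on $L_{2p}(\mathcal{N})$, respectively on $L_\infty(\mathcal{N})$, with norm equal to (or bounded by) the sup of its modulus. The main thing to keep straight is the ordering in the factorization, but since the multiplier is scalar-valued it commutes with everything, so inserting it adjacent to $a$ on the left is unambiguous.
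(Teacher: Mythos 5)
Your argument is correct and matches the paper's proof essentially verbatim: both pick a near-optimal factorization $f_n=ay_nb$, absorb the unimodular scalar function into the $L_{2p}$ factor $a$ for part (i) (noting its norm is unchanged) and absorb $\beta_n$ into the $L_\infty$ factor $y_n$ for part (ii), then pass to the infimum. The paper simply asserts $\|e^{2\pi\mathrm{i}\langle t,\cdot\rangle}a\|_{L_{2p}(\mathcal{N})}=\|a\|_{L_{2p}(\mathcal{N})}$ where you spell out the unitary computation; otherwise the two arguments are identical.
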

\begin{proof}
	(i) Since  $f=(f_{n})_{n}\in L_{p}(\mathcal{{N}};\ell_{\infty})$,  for any $\delta>0$, there exist $a,b\in L_{2p}(\mathcal{N})$ and $(y_{n})_{n}\subset L_{\infty}(\mathcal{N})$ such that $f_{n}(\cdot)=a(\cdot)y_{n}(\cdot)b(\cdot)$ for all $n$, with 
\begin{align*}
	\|a\|_{L_{2p}(\mathcal{N})}\sup_{n}\|y_{n}\|_{\infty}\|b\|_{L_{2p}(\mathcal{N})}\leq \|\sup_{n}f_{n}(\cdot)\|_{L_{p}(\mathcal{N})}+\delta. 
\end{align*}
For the sequence  $(e^{2\pi \mathrm{i}\langle t,\cdot\rangle}f_{n}(\cdot))_{n}$, each term admits the decomposition
\begin{align*}
	e^{2\pi \mathrm{i}\langle t,\cdot\rangle}f_{n}(\cdot)=e^{2\pi \mathrm{i}\langle t,\cdot\rangle}a(\cdot)y_{n}(\cdot)b(\cdot),\quad \forall n.
\end{align*} 
A direct computation yields
	\begin{align*}
		\|\sup_{n}e^{2\pi \mathrm{i}\langle t,\cdot\rangle}f_{n}(\cdot)\|_{L_{p}(\mathcal{N})}\leq&\|e^{2\pi \mathrm{i}\langle t,\cdot\rangle}a(\cdot)\|_{L_{2p}(\mathcal{N})}\sup_{n}\|y_{n}\|_{\infty}\|b\|_{L_{2p}(\mathcal{N})}\\
		=&\|a\|_{L_{2p}(\mathcal{N})}\sup_{n}\|y_{n}\|_{\infty}\|b\|_{L_{2p}(\mathcal{N})}\\
		\leq&\|\sup_{n}f_{n}\|_{L_{p}(\mathcal{N})}+\delta.
	\end{align*}
 Since $\delta$ is arbitrary, we conclude that 
 \begin{align*}
 		\|\sup_{n}e^{2\pi \mathrm{i}\langle t,\cdot\rangle}f_{n}(\cdot)\|_{L_{p}(\mathcal{N})}\leq \|\sup_{n}f_{n}\|_{L_{p}(\mathcal{N})}.
 \end{align*}
The same method also leads to the reverse inequality.
	
	(ii) This proof is similar to (i). Hence, we omit the details here.
\end{proof}

			To introduce  the noncommutative square function,	we  first recall the   column and row   spaces. For a finite sequence $ (x_{n})_{n}$ in $L_{p}(\mathcal{M})$ with $1\leq p\leq\infty$,    define 
		\begin{align*}
			\|(x_{n})_{n}\|_{L_{p}(\mathcal{M};\ell_{2}^{c})}=\bigg\|\bigg(\sum_{n}|x_{n}|^2\bigg)^{ {1}/{2}}\bigg\|_{L_{p}(\mathcal{M})},\quad  	\|(x_{n})_{n}\|_{L_{p}(\mathcal{M};\ell_{2}^{r})}=\bigg\|\bigg(\sum_{n}|x_{n}^{*}|^2\bigg)^{ {1}/{2}}\bigg\|_{L_{p}(\mathcal{M})}.
		\end{align*}
		Notice that if $p\neq2$, above two norms are not comparable. The column  space $L_{p}(\mathcal{M};\ell_{2}^{c})$ (resp. the row space $L_{p}(\mathcal{M};\ell_{2}^{r})$) is the   completion of all finite sequences in $L_{p}(\mathcal{M})$ with  respect to $\|\cdot\|_{L_{p}(\mathcal{M};\ell_{2}^{c})}$ (resp. $\|\cdot\|_{L_{p}(\mathcal{M};\ell_{2}^{r})}$).  
		The noncommutative square function space $L_{p}(\mathcal{M};\ell_{2}^{cr})$ is defined as follows:
		if $p\geq 2$,  $L_{p}(\mathcal{M};\ell_{2}^{cr})=L_{p}(\mathcal{M};\ell_{2}^{r})\cap L_{p}(\mathcal{M};\ell_{2}^{c})$ equipped with the intersection norm:
		\begin{align*}
			\|(x_{n})_{n}\|_{L_{p}(\mathcal{M};\ell_{2}^{cr})}=\max\left\{\|(x_{n})_{n}\|_{L_{p}(\mathcal{M};\ell_{2}^{r})},\,\|(x_{n})_{n}\|_{L_{p}(\mathcal{M};\ell_{2}^{c})}   \right\};		
		\end{align*}
		if $1\leq p< 2$,  $L_{p}(\mathcal{M};\ell_{2}^{cr})=L_{p}(\mathcal{M};\ell_{2}^{r})+ L_{p}(\mathcal{M};\ell_{2}^{c})$ equipped with the sum norm:
		\begin{align*}
			\|(x_{n})_{n}\|_{L_{p}(\mathcal{M};\ell_{2}^{cr})}=\inf\left\{\|(y_{n})_{n}\|_{L_{p}(\mathcal{M};\ell_{2}^{r})}+\|(z_{n})_{n}\|_{L_{p}(\mathcal{M};\ell_{2}^{c})}   \right\},		
		\end{align*}
		where the infimum runs over all decompositions $x_{n}=y_{n}+z_{n}$ with $y_n$ and $z_n$ in
		$L_p(\mathcal{M})$.
		
		By the definition of $L_{p}( \mathcal{M};\ell_{\infty})$ and $L_{p}(\mathcal{M};\ell_{2}^{cr})$, Hong et al.  get the following result (see \cite[Proposition 2.3]{hong2019pointwise}).

	\begin{proposition}\label{8.10.1}
		Let $1\leq p\leq \infty$ and $(x_{n})\in L_{p}( \mathcal{M};\ell_{\infty})$. Then there exists an absolute
		constant $C>0$ such that  
		\begin{align*}
			\Big\|\sup_{n} x_{n}\Big\|_{L_{p}(\mathcal{M})}\leq C\|(x_n)\|_{L_{p}(\mathcal{M};\ell_{2}^{cr})}.
		\end{align*}	
	\end{proposition}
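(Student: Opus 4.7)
The inequality is the embedding $L_p(\mathcal{M};\ell_2^{cr})\hookrightarrow L_p(\mathcal{M};\ell_\infty)$, a standard feature of vector-valued noncommutative $L_p$-theory. I would split the argument along the two regimes $p\ge 2$ and $1\le p<2$, mirroring the two definitions of the $\ell_2^{cr}$ norm (intersection versus sum), and reduce in each regime to selfadjoint sequences.

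\textbf{Reduction to the selfadjoint case.} Since $L_p(\mathcal{M};\ell_\infty)$ is a Banach space, writing $x_n=\alpha_n+i\beta_n$ with selfadjoint $\alpha_n=(x_n+x_n^*)/2$ and $\beta_n=(x_n-x_n^*)/(2i)$ and applying the triangle inequality for $\|\sup_n\cdot\|_{L_p(\mathcal{M})}$ reduces matters to estimating $\|\sup_n\alpha_n\|_p$ and $\|\sup_n\beta_n\|_p$. For selfadjoint sequences, row and column square-function norms coincide.

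\textbf{Case $p\ge 2$.} For the selfadjoint sequence $(\alpha_n)$, operator monotonicity of $t\mapsto t^{1/2}$ gives $\alpha_n^2\le\sum_m\alpha_m^2$, hence $|\alpha_n|\le a$ with $a:=\bigl(\sum_m\alpha_m^2\bigr)^{1/2}\in L_p(\mathcal{M})_+$, so $-a\le\alpha_n\le a$ for every $n$. The characterization (\ref{1223.3}) then yields
\[
\bigl\|\sup_n\alpha_n\bigr\|_{L_p(\mathcal{M})}\le\|a\|_{L_p(\mathcal{M})}=\|(\alpha_n)\|_{L_p(\mathcal{M};\ell_2^c)}.
\]
A triangle inequality in $L_p(\mathcal{M};\ell_2^c)$, combined with the identity $\|(x_n^*)\|_{L_p(\mathcal{M};\ell_2^c)}=\|(x_n)\|_{L_p(\mathcal{M};\ell_2^r)}$, bounds the right-hand side by $\tfrac12\bigl(\|(x_n)\|_{L_p(\mathcal{M};\ell_2^c)}+\|(x_n)\|_{L_p(\mathcal{M};\ell_2^r)}\bigr)\le\|(x_n)\|_{L_p(\mathcal{M};\ell_2^{cr})}$, since for $p\ge 2$ the intersection norm is the maximum of the row and column norms. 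The same bound applies to $(\beta_n)$.

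\textbf{Case $1\le p<2$ and main obstacle.} Here the $\ell_2^{cr}$ norm is a sum norm, so for each admissible splitting $x_n=y_n+z_n$ with $(y_n)\in L_p(\mathcal{M};\ell_2^r)$ and $(z_n)\in L_p(\mathcal{M};\ell_2^c)$, the Banach-space triangle inequality reduces matters to the two one-sided embeddings $L_p(\mathcal{M};\ell_2^c)\hookrightarrow L_p(\mathcal{M};\ell_\infty)$ and $L_p(\mathcal{M};\ell_2^r)\hookrightarrow L_p(\mathcal{M};\ell_\infty)$. For the column piece, set $B=(\sum_m z_m^*z_m)^{1/2}\in L_p(\mathcal{M})_+$ and define $v_n:=z_nB^{-1}$ via support projections; the identity $\sum_m v_m^*v_m=I$ forces $\|v_n\|_\infty\le 1$. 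To fit Definition \ref{250210.1}, I would then split $B=B^{1/2}\cdot B^{1/2}$ (each factor lies in $L_{2p}(\mathcal{M})$ with norm $\|(z_n)\|_{L_p(\mathcal{M};\ell_2^c)}^{1/2}$) and redistribute the boundedness of the $v_n$ across the split. Taking infima over decompositions of $x_n=y_n+z_n$ delivers the sum norm on the right-hand side. The technical heart of the proof, and its main obstacle, is precisely this symmetrization from a one-sided bound $z_n=v_nB$ (with only one factor in the correct Schatten class) to a genuine two-sided factorization $z_n=ay_nb$ with $a,b\in L_{2p}(\mathcal{M})$ and uniformly bounded middle term; I would handle this via the standard matrix-amplification / noncommutative Mazur-map trick, exploiting that the row $(v_n)_n$ lies in the unit ball of $\mathcal{M}\,\overline\otimes\,\mathrm{Row}$ to transfer the $L_\infty$-boundedness across the square-root splitting of $B$.
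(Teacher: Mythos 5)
The paper does not prove this proposition; it cites it from Hong--Wang--Wang \cite[Proposition 2.3]{hong2019pointwise}, so I am evaluating your argument on its own merits.

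Your treatment of the regime $p\geq 2$ is correct and is the standard one: after splitting $x_n=\alpha_n+i\beta_n$, operator monotonicity of $t\mapsto t^{1/2}$ gives $|\alpha_n|\leq a:=\bigl(\sum_m\alpha_m^2\bigr)^{1/2}$, so $-a\leq\alpha_n\leq a$, and (\ref{1223.3}) together with $\|(x_n^*)\|_{L_p(\mathcal M;\ell_2^c)}=\|(x_n)\|_{L_p(\mathcal M;\ell_2^r)}$ yields $\|\sup_n x_n\|_p\leq \|(x_n)\|_c+\|(x_n)\|_r\leq 2\|(x_n)\|_{cr}$. Note this chain actually holds for all $1\leq p\leq\infty$ up to its last step; the last step uses that for $p\geq 2$ the $cr$-norm is the maximum.

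For $1\leq p<2$ there is a genuine gap, which you yourself flag. The $cr$-norm is now the sum norm, which satisfies $\|(x_n)\|_{cr}\leq\min\{\|(x_n)\|_c,\|(x_n)\|_r\}$, so the bound from the selfadjoint reduction ($\|(x_n)\|_c+\|(x_n)\|_r$) cannot be compared to $\|(x_n)\|_{cr}$, and you correctly reduce the problem to the one-sided embedding $L_p(\mathcal M;\ell_2^c)\hookrightarrow L_p(\mathcal M;\ell_\infty)$. But your proposed route to that embedding does not go through. From $B=(\sum_m z_m^*z_m)^{1/2}$ and $z_n=v_nB$ with $\|v_n\|_\infty\leq 1$, you have $B\in L_p(\mathcal M)$, not $L_{2p}(\mathcal M)$, and Definition \ref{250210.1} requires a two-sided factorization $z_n=ay_nb$ with $a,b\in L_{2p}(\mathcal M)$. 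Writing $B=B^{1/2}B^{1/2}$ and ``redistributing boundedness'' would amount to using the middle term $B^{-1/2}v_nB^{1/2}$, which is not bounded in general (conjugation by an unbounded positive operator destroys $\|\cdot\|_\infty$-contractivity); the same obstruction stops the $2\times 2$ amplification trick, since that too requires conjugating the contractive $v_n$ by a power of $B$. The appeal to a ``noncommutative Mazur map / matrix amplification trick'' is therefore not a proof but a placeholder, and as written the claim for $1\leq p<2$ remains unestablished. To finish, one would need either a duality argument (reducing to $L_{p'}(\mathcal M;\ell_1)\hookrightarrow L_{p'}(\mathcal M;\ell_2^r)$ for $p'\geq 2$) or some other substitute for the failed factorization; you should supply one of these explicitly rather than gesture at it.
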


	\vspace{0.1cm}

	\noindent\textbf{2.3. Noncommutative pointwise convergence.}
	The appropriate substitute for the usual almost everywhere convergence in the noncommutative setting is the almost uniform convergence introduced by Lance \cite{MR0428060}.
	
	\begin{definition}\label{250121.1}
			Let $(x_{n})$ be a family of elements in $L_{p}(\mathcal{M})$.
			\begin{itemize}
				\item [(i)] $(x_{n})$ is said
				to converge almost uniformly (a.u. in short) to $x$, if for any $\varepsilon>0$, there exists a projection $e\in \mathcal{M}$ such that 
					\begin{align*}
					\tau(1-e)<\varepsilon \mbox{\quad and \quad}\lim_{n\rightarrow \infty}\|(x_{n}-x)e\|_{\infty}=0.
				\end{align*}
			\item [(ii)] $(x_{n})$ is said to converge bilaterally almost uniformly  (b.a.u. in short) to $x$, if for any $\varepsilon>0$ there exists a projection $e\in \mathcal{M}$ such that
			\begin{align*}
				\tau(1-e)<\varepsilon\mbox{ \quad and \quad }\lim_{n\rightarrow\infty}\|e(x_{n}-x)e\|_{\infty}=0.
			\end{align*}
			\end{itemize}
	\end{definition}
	Obviously, if  $x_{n}\xrightarrow{a.u}  x$, then $x_{n} \xrightarrow{b.a.u}  x$. In the commutative case, both  notions of convergence in Definition \ref{250121.1} are equivalent to the usual
	almost everywhere convergence by Egorov  theorem. However, they
	are different in the noncommutative setting.
	
    In order to establish the  noncommutative ergodic theorems, it is convenient to  introduce  the closed subspace
	$L_{p}(\mathcal{M};c_{0})$  of $L_{p}( \mathcal{M};\ell_{\infty})$. $L_{p}(\mathcal{M};c_{0})$ is defined as the space of all
	sequences  $(x_{n})_{n\in \mathbb{Z}}\subset L_{p}(\mathcal{M})$ such that there exist
	$a,b\in L_{2p}(\mathcal{M}) $ and a bounded sequence $y=(y_{n})_{n\in \mathbb{Z}}\subset L_{\infty}(\mathcal{M})$ satisfying
	\begin{align*}
		x_{n}=ay_{n}b\,\,\,\forall\,n\in \mathbb{Z},\mbox{\quad and\quad} \lim_{n}\|y_n\|_{\infty}=0.
	\end{align*}
The following lemma from \cite{MR2276775} is useful in the study of noncommutative ergodic theorems. 
	\begin{lemma}\label{1227.1}
		If $1\leq p<\infty$ and $(x_{n})\in L_{p}(\mathcal{M};c_{0})$, then $x_{n}\xrightarrow{b.a.u} 0$ as $n\rightarrow\infty$.
	\end{lemma}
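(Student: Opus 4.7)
The plan is to unpack the factorization definition of $L_{p}(\mathcal{M}; c_{0})$ and build a single projection via spectral calculus. Fix $\varepsilon > 0$. By hypothesis there exist $a, b \in L_{2p}(\mathcal{M})$ and $(y_{n}) \subset L_{\infty}(\mathcal{M})$ with $x_{n} = a y_{n} b$ for all $n$ and $\|y_{n}\|_{\infty} \to 0$. My goal is to find a projection $e \in \mathcal{M}$ with $\tau(1-e) < \varepsilon$ for which both $ea$ and $be$ are bounded; the factorization then forces
\[
\|e x_{n} e\|_{\infty} = \|(ea)\, y_{n}\, (be)\|_{\infty} \leq \|ea\|_{\infty}\, \|y_{n}\|_{\infty}\, \|be\|_{\infty} \longrightarrow 0,
\]
which is exactly b.a.u.\ convergence of $(x_{n})$ to $0$.

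To manufacture $e$, I treat $a$ and $b$ symmetrically via spectral cut-offs. Since $a \in L_{2p}(\mathcal{M})$, the positive operator $aa^{*}$ lies in $L_{p}(\mathcal{M})$ with $\tau((aa^{*})^{p}) = \|a\|_{L_{2p}(\mathcal{M})}^{2p}$. For $M > 0$, set $e_{1} = \chi_{[0, M]}(aa^{*})$. The Borel functional calculus gives $e_{1}(aa^{*})e_{1} \leq M e_{1}$, and the $C^{*}$-identity yields $\|e_{1} a\|_{\infty}^{2} = \|e_{1} a a^{*} e_{1}\|_{\infty} \leq M$. On the other hand, the Chebyshev-type bound
\[
\tau(1 - e_{1}) = \tau(\chi_{(M, \infty)}(aa^{*})) \leq M^{-p}\tau((aa^{*})^{p}) = M^{-p}\|a\|_{L_{2p}(\mathcal{M})}^{2p}
\]
is smaller than $\varepsilon/2$ for $M$ large. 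Symmetrically, pick $e_{2} = \chi_{[0, M']}(b^{*}b)$ so that $\|b e_{2}\|_{\infty} \leq \sqrt{M'}$ and $\tau(1 - e_{2}) < \varepsilon/2$ for $M'$ large.

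The final step is to combine. Set $e = e_{1} \wedge e_{2}$. Because $e \leq e_{1}$ and $e \leq e_{2}$, the norm bounds descend to $\|ea\|_{\infty} \leq \sqrt{M}$ and $\|be\|_{\infty} \leq \sqrt{M'}$. De Morgan's identity $1 - e = (1-e_{1}) \vee (1-e_{2})$, together with the subadditivity $\tau(p \vee q) \leq \tau(p) + \tau(q)$ for projections, gives $\tau(1-e) < \varepsilon$. Inserting these estimates into the displayed inequality above completes the proof. There is no serious obstacle in this argument; it is a clean interplay of the vector-valued factorization definition, spectral calculus, and Chebyshev's inequality. The only points deserving care are the applicability of the $C^{*}$-identity and the Chebyshev bound to $\tau$-measurable (possibly unbounded) operators in the semifinite setting, both of which are standard.
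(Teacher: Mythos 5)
The paper does not prove this lemma; it is cited directly from Junge--Xu \cite{MR2276775} without argument. Your proof is correct and is the standard one: using the factorization $x_n = a y_n b$ with $\lim_n\|y_n\|_\infty = 0$ from the definition of $L_p(\mathcal{M};c_0)$, constructing spectral cut-off projections $e_1=\chi_{[0,M]}(aa^*)$ and $e_2=\chi_{[0,M']}(b^*b)$, bounding $\|e_1a\|_\infty$ and $\|be_2\|_\infty$ via the $C^*$-identity, controlling $\tau(1-e_i)$ by Chebyshev, and taking $e=e_1\wedge e_2$ with the subadditivity of $\tau$ on joins of projections (Kaplansky's formula). Each step is valid in the semifinite $\tau$-measurable setting, and this matches the argument one finds in the cited source.
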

\vspace{0.1cm}

%	\noindent\textbf{$W^{*}$-dynamical system.}
%	We call $(\mathcal{M},\tau,\,\mathbb{Z}^d,\alpha)$ a $W^{*}$-dynamical
%	system if $\alpha:\mathbb{Z}^d \rightarrow Aut(\mathcal{M})$ is a trace preserving group homomorphism (also called an action) in the weak $*$-
%	topology. It is well known that for any $v\in \mathbb{Z}^d,$ $\alpha(v)$ extends to an isometric automorphism
%	of $L_{p}(\mathcal{M})$ for all $1\leq p<\infty$, which is still denoted by $\alpha$.
%	
%	Let $F_{0}=\left\{ x\in\mathcal{M}:\alpha(v)x=x,\,\forall\,v\in \mathbb{Z}^d\right\}$. It is clear that $F_{0}$ is a
%	von Neumann subalgebra of $\mathcal{M}$, thus there exists a unique conditional expectation $F:\mathcal{M} \rightarrow F_{0}$. Moreover, this conditional expectation extends naturally from $L_{p}(\mathcal{M})$ to $L_{p}({F}_{0})$ with $1<p<\infty$, which is still denoted by $F$. 
%	

	\section{ Proof of Theorem \ref{8.17.1}}\label{25113.1}

In this section, we prove the dimension-free estimates for the semi-commutative discrete dyadic Hardy-Littlewood maximal operators over the Euclidean ball  on $L_{p}(\mathcal{N})$  for $2\leq p\leq \infty$. If $p=\infty$, there is nothing to do, since $\mathcal{M}_{N}^{B}$ is an averaing operator. By  Theorem \ref{0.4},  it suffices to show 
	\begin{align}\label{250208.1}
		\Big\|\sup_{N\in \mathbb{D}}\mathcal{M}_{N}^{B}f\Big\|_{L_{2}(\mathcal{N})}\lesssim\|f\|_{L_{2}(\mathcal{N})},
	\end{align}
with the implicit constant is independent of  the dimension $d$. In the following, we  analyze the maximal function  corresponding to  $\mathcal{M}_{N}^{B}$ with the supremum	restricted respectively to the sets:
	\begin{itemize}
		\item[(i)]  the small-scale case:
		\begin{align*}
			\mathbb{D}_{c_{0}}=\left\{N \in\mathbb{D}:N\leq c_{0}d^{\frac{1}{2}}\right\};
		\end{align*}
		\item[(ii)]   the intermediate-scale case:
		\begin{align*}
			\mathbb{D}_{c_{1},c_{2}}=\left\{N \in\mathbb{D}:c_{1}d^{\frac{1}{2}}\leq N\leq c_{2}d\right\};
		\end{align*}
		\item[(iii)]  the large-scale case:
		\begin{align*}
			\mathbb{D}_{c_{3},\infty}=\left\{N \in\mathbb{D}:N\geq c_{3}d\right\};
		\end{align*}
	\end{itemize}
	for some universal constants $c_{0}, c_{1}, c_{2}, c_{3}>0$. Since we are working with the dyadic numbers $\mathbb{D}$, the
	  values of $c_{0}, c_{1}, c_{2}, c_{3}$  never play a role as long as they are absolute constants. Moreover, the implied constants are allowed to depend on  $c_{0}, c_{1}, c_{2}, c_{3}$.

\subsection{Dimension free estimate for the semigroup}
     In the small-scale and intermediate-scale cases, we  use the dimension-free estimate for the semigroup $P_{t}$ on $\mathbb{Z}^d$, where $t>0$ and $P_{t}$  is a convolution operator with Fourier multiplier: 
	\begin{align*} 
		  \mathfrak{p}_{t}(\xi)=e^{-t\sum_{k=1}^{d}\sin^2(\pi \xi_{k})},\quad\xi\in \mathbb{T}^d.
	\end{align*}
	Let $ e_{1},\cdots ,e_{d} $ be the standard basis in $\mathbb{Z}^d$. For every $k\in\mathbb{N}_{d}$ and $x\in \mathbb{Z}^d$, we define 
	\begin{align*}
		\Delta_{k} f(x)=f(x)-f(x+e_{k})
	\end{align*}
	 as the discrete partial derivative on $\mathbb{Z}^d$. The adjoint of $\Delta_{k}$ is  determined by  $\Delta^{*}_{k} f(x)=f(x)-f(x-e_{k})$ and  the discrete partial Laplacian is given by
	\begin{align*} 
		\mathcal{L}_{k}f(x)=\frac{1}{4}\Delta^{*}_{k}\Delta_{k}f(x)=\frac{1}{2}f(x)-\frac{1}{4}\big(f(x+e_{k})+f(x-e_{k})\big).
	\end{align*} 
	We see that  
	\begin{align*}
		\widehat{\mathcal{L}_{k}f}(\xi)=\frac{1-\cos(2\pi \xi_{k})}{2}\widehat{f}(\xi)=\sin^{2}(\pi\xi_{k})\widehat{f}(\xi),\quad\xi\in \mathbb{T}^d.
	\end{align*}
	\begin{proposition}\label{8.8.2}
		Let $1<p<\infty$ and $f\in L_{p}(\mathcal{N})$,  there exists a constant $C_{p}>0$ independent of the dimension $d$ such that  
		\begin{align*}
			\Big\|\sup_{t>0} P_{t}f\Big\|_{L_{p}(\mathcal{N})}\leq C_{p} \|f\|_{L_{p}(\mathcal{N})}.
		\end{align*}
	\end{proposition}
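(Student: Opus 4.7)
My plan is to exhibit $(P_{t})_{t>0}$ as a semi-commutative symmetric Markov semigroup on $\mathcal{N}$ and then invoke the noncommutative analogue of Stein's maximal theorem for such semigroups, whose constant depends only on $p$, not on the semigroup itself.

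First I would factor the Fourier multiplier as $\mathfrak{p}_{t}(\xi)=\prod_{k=1}^{d}e^{-t\sin^{2}(\pi\xi_{k})}$, which identifies $P_{t}$ with the tensor product $\bigotimes_{k=1}^{d}P_{t}^{(k)}$ of the one-dimensional convolution operators $P_{t}^{(k)}$ on $\mathbb{Z}$. Rewriting $e^{-t\sin^{2}(\pi\xi)}=e^{-t/2}e^{(t/2)\cos(2\pi\xi)}$ and expanding $e^{s\cos(2\pi\xi)}=\sum_{n\in\mathbb{Z}}I_{n}(s)e^{2\pi\mathrm{i}n\xi}$ in modified Bessel functions $I_{n}$, the convolution kernel of $P_{t}^{(k)}$ at $n\in\mathbb{Z}$ equals $e^{-t/2}I_{|n|}(t/2)\ge 0$. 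Thus each $P_{t}^{(k)}$ is Markovian (positive, unital since $\mathfrak{p}_{t}(0)=1$) and is $\ell_{2}(\mathbb{Z})$ self-adjoint (its symbol is real and even), and these properties pass to the tensor product $P_{t}$ on $\ell_{\infty}(\mathbb{Z}^{d})$.

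Next I would tensor with $\mathrm{id}_{\mathcal{M}}$ to obtain a completely positive, unital, trace-preserving semigroup on $\mathcal{N}$ that is self-adjoint on $L_{2}(\mathcal{N})$, i.e.\ a semi-commutative symmetric Markov semigroup in the sense of Junge--Xu \cite{MR2276775}. Their noncommutative Stein maximal theorem for symmetric Markov semigroups then yields
\begin{align*}
\Big\|\sup_{t>0}P_{t}f\Big\|_{L_{p}(\mathcal{N})}\leq C_{p}\|f\|_{L_{p}(\mathcal{N})},\quad 1<p<\infty,
\end{align*}
with $C_{p}$ depending only on $p$. Since no feature of $(\mathcal{N},P_{t})$, in particular no dependence on $d$, enters this constant, the estimate is automatically dimension-free.

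The only delicate point in this proposal is the verification of pointwise non-negativity of the convolution kernel; once the Bessel-function identity is in hand, the Markovianness, $L_{2}$ self-adjointness and complete positivity of the lifted semigroup are all immediate, and the rest of the argument is a direct appeal to the noncommutative symmetric-Markov maximal inequality.
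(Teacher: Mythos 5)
Your proposal is correct and follows essentially the same route as the paper: both proofs reduce to verifying that $(P_t)_{t>0}$, viewed on $\mathcal{N}$, is a self-adjoint positive contraction semigroup (conditions (i)--(iv) of Lemma \ref{250806.1}) via the one-dimensional factorization and then invoking the Junge--Xu maximal ergodic theorem. The only cosmetic difference is how positivity of the one-dimensional factor is established: the paper expands $e^{-t\mathcal{L}_k}=e^{-t/2}\sum_{n\geq0}\tfrac{(t/2)^n}{n!}\mathcal{G}_k^n$ and uses positivity of the averaging operator $\mathcal{G}_k$, while you observe that the convolution kernel equals $e^{-t/2}I_{|n|}(t/2)\geq0$ via the modified Bessel expansion; these are two readings of the same identity, since $\mathcal{G}_k$ has symbol $\cos(2\pi\xi_k)$.
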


	    For this proof, we recall  the maximal inequality  in \cite[Corollary 5.11]{MR2276775}. 
	    
	    \begin{lemma}\label{250806.1}
	    	 Suppose that $(T_{t})_{t>0}: \mathcal{M}\rightarrow \mathcal{M}$ is a semigroup. For every $t>0$, the operator $T_{t}$ is   linear  and  satisfies:
	    	\begin{itemize}
	    		\item[(i)] $T_{t}$ is a contraction on $\mathcal{M}:$ $\|T_{t}(x)\|_{\infty}\leq\|x\|_{\infty}$ for all $x\in \mathcal{M}$;
	    		
	    		\vspace{0.1cm}
	    		
	    		\item[(ii)] $T_{t}$ is positive\,:~$ T_{t}(x)\geq0$  if $ x\geq0$;
	    		
	    		\vspace{0.1cm}
	    		
	    		\item [(iii)] $\tau \circ T_{t}\leq \tau:$ $\tau (T_{t}(x))\leq \tau(x)$ for all $x\in L_{1}(\mathcal{M})\cap \mathcal{M}_{+}$; 
	    		
	    		\vspace{0.1cm}
	    		
	    		\item[(iv)] $T_{t}$ is symmetric relative to $\tau:$ $\tau(T_{t}(y)^{*}x)=\tau(y^{*}T_{t}(x))$ for all $x,y\in L_{2}(\mathcal{M})\cap \mathcal{M}$.
	    	\end{itemize}
	    Then, for $1<p<\infty$, we have
	    	\begin{align}\label{24127.1}
	    		\Big\|\sup_{t>0} T_{t}(x)\Big\|_{L_{p}(\mathcal{M})}\leq C_{p} 	\|x\|_{L_{p}(\mathcal{M})},\quad x \in L_{p}(\mathcal{M}).
	    	\end{align}
	    \end{lemma}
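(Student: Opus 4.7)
The plan is to verify that the semigroup $(P_t)_{t>0}$, extended pointwise to the semi-commutative algebra $\mathcal{N}$, satisfies the four hypotheses of Lemma~\ref{250806.1}, and then to invoke that lemma directly. Since the constant $C_p$ there depends only on $p$ and not on the underlying von Neumann algebra nor on the particular semigroup, the bound produced this way is automatically dimension-free.

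The main step is to realize $P_t$ as convolution on $\mathbb{Z}^d$ with an explicit positive, symmetric probability kernel. The product structure of the symbol $\mathfrak{p}_t(\xi) = \prod_{k=1}^d e^{-t\sin^2(\pi\xi_k)}$ reduces matters to the one-dimensional case. Using $\sin^2(\pi\xi) = (1-\cos(2\pi\xi))/2$ together with the generating identity $e^{z\cos\theta} = \sum_{n\in\mathbb{Z}} I_n(z) e^{\mathrm{i} n\theta}$ for the modified Bessel functions $I_n$, the one-dimensional inverse Fourier transform is $k_t(n) = e^{-t/2} I_n(t/2)$, which is strictly positive, symmetric in $n$, and normalized so that $\sum_n k_t(n) = 1$. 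Consequently $P_t$ is convolution on $\mathbb{Z}^d$ with the symmetric probability measure $K_t(y) = \prod_{k=1}^d k_t(y_k)$. An alternative positivity argument that avoids Bessel functions is to expand $P_t = \prod_{k=1}^d e^{-t\mathcal{L}_k}$ as a power series in the translations $\tau_{\pm e_k}$, exhibiting $P_t$ as a limit of non-negative combinations of translations summing to one.

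Extending $P_t$ to $\mathcal{N}$ by $P_t f(x) = \sum_{y\in\mathbb{Z}^d} K_t(y) f(x-y)$, the four hypotheses of Lemma~\ref{250806.1} are then immediate. Contractivity follows from $K_t$ being a probability measure together with the triangle inequality applied pointwise in $x$; positivity follows from $K_t \geq 0$; the trace identity $(\sum\otimes\tau)(P_t f) = (\sum\otimes\tau)(f)$ follows from translation invariance of the counting measure on $\mathbb{Z}^d$ via Fubini; and self-adjointness with respect to $\sum\otimes\tau$ reduces to the symmetry $K_t(-y) = K_t(y)$. The semigroup property $P_s P_t = P_{s+t}$ is immediate from the exponential form of the symbol. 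The main (and only) potential obstacle is the positivity of $K_t$; once this is in hand, Lemma~\ref{250806.1} applied to $(\mathcal{N}, \sum\otimes\tau)$ delivers the claim with a dimension-free constant.
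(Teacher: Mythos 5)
Your proposal does not prove the statement at hand. The statement is the abstract Junge--Xu maximal ergodic inequality: for \emph{any} semigroup $(T_t)_{t>0}$ on a semifinite von Neumann algebra satisfying the four conditions (contractivity on $\mathcal{M}$, positivity, subtraciality, and symmetry), one has $\|\sup_{t>0}T_t(x)\|_{L_p(\mathcal{M})}\leq C_p\|x\|_{L_p(\mathcal{M})}$ for $1<p<\infty$. Your argument instead takes this lemma as given (``\dots and then to invoke that lemma directly'') and verifies its hypotheses for the particular discrete heat semigroup $P_t$ on $\mathcal{N}=\ell_\infty(\mathbb{Z}^d)\overline{\otimes}\mathcal{M}$. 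That is precisely the content of Proposition \ref{8.8.2}, not of Lemma \ref{250806.1}; as a proof of the lemma it is circular. For the record, the paper does not prove this lemma either: it is quoted from Junge--Xu \cite{MR2276775} (Corollary 5.11), whose proof rests on genuinely different machinery --- the noncommutative Dunford--Schwartz maximal ergodic theorem for $L_2$, the duality between $L_p(\mathcal{M};\ell_1)$ and $L_{p'}(\mathcal{M};\ell_\infty)$, and interpolation of the spaces $L_p(\mathcal{M};\ell_\infty)$ --- none of which appears in your proposal. Nothing in your text addresses why conditions (i)--(iv) for an arbitrary $(T_t)$ force the maximal bound.

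As a secondary remark: read as a proof of Proposition \ref{8.8.2}, your verification is essentially correct and close to the paper's. The identification of the one-dimensional kernel as $k_t(n)=e^{-t/2}I_n(t/2)$ (positive, symmetric, summing to $1$) is fine, and your ``alternative positivity argument'' is exactly what the paper does: it writes $e^{-t\mathcal{L}_k}=e^{-t/2}\sum_{n\geq0}\frac{(t/2)^n}{n!}\mathcal{G}_k^n$ with $\mathcal{G}_k$ the symmetric nearest-neighbour average, obtaining positivity, contractivity, subtraciality and symmetry without Bessel functions, and then composes the commuting one-dimensional semigroups. But this does not discharge the obligation to prove the abstract maximal inequality itself.
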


	\vspace{0.2cm}
	
	\noindent $\mathbf{Proof~of~Proposition~\ref{8.8.2}:}$ We first show that, for every $k\in \mathbb{N}_{d}$, $\mathcal{L}_{k}$  generates a semigroup $(e^{-t\mathcal{L}_{k}})_{t>0}$ satisfying (i)-(iv) of Lemma \ref{250806.1}. Denoting  $\mathcal{G}_{k}f(x)=\frac{1}{2}\big(f(x+e_{k})+f(x-e_{k})\big)$,  one has $2\mathcal{L}_{k}=I-\mathcal{G}_{k}$. Thus,
	\begin{align*} 
		e^{-t\mathcal{L}_{k}}f=e^{-\frac{t}{2}}\sum_{n=0}^{\infty}\frac{(\frac{t}{2})^{n}}{n!}(\mathcal{G}_{k})^{n}f,\quad t>0.
	\end{align*}
    The above formula, together with the positivity and symmetry of $\mathcal{G}_{k}$, 
    implies that $e^{-t\mathcal{L}_{k}}$ is positive and symmetric. Moreover, we calculate  
	\begin{align*} 
		\|e^{-t\mathcal{L}_{k}}f\|_{\infty}\leq  e^{-\frac{t}{2}}\sum_{n=0}^{\infty}\frac{(\frac{t}{2})^{n}}{n!}\|(\mathcal{G}_{k})^{n}f\|_{\infty} 
		\leq e^{-\frac{t}{2}}\sum_{n=0}^{\infty}\frac{(\frac{t}{2})^{n}}{n!}\|f\|_{\infty}
		=\|f\|_{\infty}.
	\end{align*}
Similarly, for any $f\in  L_{1}( \mathcal{N})\cap\mathcal{N}_{+}$, we conclude that 
\begin{align*}
	\sum \otimes\tau (e^{-t\mathcal{L}_{k}}   f)= \|e^{-t\mathcal{L}_{k}}f\|_{L_{1}(\mathcal{N})}
	\leq  \|f\|_{L_{1}(\mathcal{N})}= \sum \otimes\tau(f).
\end{align*}
In summary,  $(e^{-t\mathcal{L}_{k}})_{t>0}$ is a semigroup satisfying (i)-(iv). Since the operators $\mathcal{L}_{1},\cdots,\mathcal{L}_{d}$ commute pairwise, we  have
	\begin{align*} 
		e^{-t(\mathcal{L}_{1}+\cdots+\mathcal{L}_{d})}=e^{-t\mathcal{L}_{1}}\circ\cdots \circ e^{-t\mathcal{L}_{d}}, \quad t\geq 0.
	\end{align*}
Thus, $\mathcal{L}=\mathcal{L}_{1}+\cdots+\mathcal{L}_{d}$ generates a semigroup satisfying (i)-(iv). 
On the other hand, observe that
	\begin{align}\label{250313.1}
		\widehat{e^{-t\mathcal{L}}f}(\xi)=e^{-t\sum_{k=1}^{d}\sin^2(\pi \xi_{k})}\widehat{f}(\xi)=\widehat{P_{t}f}(\xi).
	\end{align}
Therefore, combing (\ref{24127.1}) with (\ref{250313.1}), we obtain
	\begin{align*}
		\Big\|\sup_{t>0} P_{t}f\Big\|_{L_{p}(\mathcal{N})}=\Big\|\sup_{t>0} e^{-t\mathcal{L}}f\Big\|_{L_{p}(\mathcal{N})}\leq C_{p}\|f\|_{L_{p}(\mathcal{N})},\quad 1<p<\infty. 
	\end{align*}
	$\hfill\square$

\subsection{The small-scale and intermediate-scale cases} To prove (\ref{250208.1}), we start by estimating the
maximal function corresponding to $\mathcal{M}_{N}^{B}$ with the supremum taken over the set $\mathbb{D}_{c_{0}}$, $\mathbb{D}_{c_{1},c_{2}}$ respectively. 

\begin{lemma}\label{1203.3}
	Let $c_{0}> 0$ and  define $\mathbb{D}_{c_0}=\left\{N \in\mathbb{D}: N\leq c_{0}d^{\frac{1}{2}}\right\}$. Then   for every $f\in L_{2}(\mathcal{N})$, there exists a constant $C>0$   independent of the dimension $d$ such that
	\begin{align}\label{250313.2}
		\Big\|\sup_{N\in \mathbb{D}_{c_{0}}}\mathcal{M}_{N}^{B}f\Big\|_{L_{2}(\mathcal{N})}\leq C \|f\|_{L_{2}(\mathcal{N})}.
	\end{align}
\end{lemma}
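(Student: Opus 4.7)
The natural strategy is to approximate each discrete ball average by an associated semigroup element and reduce the maximal bound to Proposition \ref{8.8.2} together with a square-function tail estimate. Pick a universal constant $c>0$ and write $\mathcal{M}_{N}^{B}f = P_{cN^{2}}f + R_{N}f$ with $R_{N} := \mathcal{M}_{N}^{B} - P_{cN^{2}}$. By the triangle inequality for the maximal norm,
$$\Big\|\sup_{N\in\mathbb{D}_{c_{0}}}\mathcal{M}_{N}^{B}f\Big\|_{L_{2}(\mathcal{N})} \leq \Big\|\sup_{N\in\mathbb{D}_{c_{0}}}P_{cN^{2}}f\Big\|_{L_{2}(\mathcal{N})} + \Big\|\sup_{N\in\mathbb{D}_{c_{0}}}R_{N}f\Big\|_{L_{2}(\mathcal{N})}.$$
The first summand is controlled dimension-freely by Proposition \ref{8.8.2}, so the whole game is to handle the tail $\sup_{N}R_{N}f$.

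For the tail I would invoke Proposition \ref{8.10.1}: at $p=2$ both the column and row norms of $L_{2}(\mathcal{N};\ell_{2}^{cr})$ coincide with the standard square function, giving $\|\sup_{N}R_{N}f\|_{L_{2}(\mathcal{N})}^{2} \lesssim \sum_{N\in\mathbb{D}_{c_{0}}}\|R_{N}f\|_{L_{2}(\mathcal{N})}^{2}$. Since both $\mathcal{M}_{N}^{B}$ and $P_{cN^{2}}$ act on $L_{2}(\mathcal{N})$ as scalar Fourier multipliers $\mathfrak{m}_{N}(\xi)=\widehat{K_{N}}(\xi)$ and $\mathfrak{p}_{cN^{2}}(\xi)$, where $K_{N}:=|B_{N}\cap\mathbb{Z}^{d}|^{-1}\chi_{B_{N}\cap\mathbb{Z}^{d}}$, the operator-valued Plancherel identity (\ref{250609.1}) rewrites this as
$$\sum_{N\in\mathbb{D}_{c_{0}}}\|R_{N}f\|_{L_{2}(\mathcal{N})}^{2} = \int_{\mathbb{T}^{d}} \Big(\sum_{N\in\mathbb{D}_{c_{0}}}|\mathfrak{m}_{N}(\xi)-\mathfrak{p}_{cN^{2}}(\xi)|^{2}\Big) \bigl(\textstyle\sum\otimes\tau\bigr)\bigl(|\widehat{f}(\xi)|^{2}\bigr)\, d\xi.$$
Hence (\ref{250313.2}) follows once we establish the uniform scalar multiplier estimate
$$\sup_{\xi\in\mathbb{T}^{d}}\sum_{N\in\mathbb{D}_{c_{0}}}\bigl|\mathfrak{m}_{N}(\xi)-\mathfrak{p}_{cN^{2}}(\xi)\bigr|^{2} \leq C$$
with $C$ independent of $d$.

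The main obstacle is precisely this uniform multiplier bound. Unlike the cube case, the Fourier transform $\mathfrak{m}_{N}$ of a lattice-ball indicator does not factor into a tensor product of one-dimensional multipliers, so a direct dimensional computation is unavailable. Following the Bourgain--Mirek--Stein--Wr\'{o}bel approach \cite{MR4175747}, one expects a pointwise bound of roughly the shape $|\mathfrak{m}_{N}(\xi)-\mathfrak{p}_{cN^{2}}(\xi)|\lesssim \min\bigl(1,\, N^{2}\sum_{k=1}^{d}\sin^{2}(\pi\xi_{k})\bigr)^{\alpha}$ for some $\alpha>0$, whose dyadic $\ell^{2}$-sum converges. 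The constraint $N\leq c_{0}d^{1/2}$ is essential here: it guarantees that the second-order Taylor expansion of $\mathfrak{m}_{N}$ around $\xi=0$ agrees with that of the exponential $\mathfrak{p}_{cN^{2}}$ at a rate independent of $d$, so the error absorbs the dimensional growth of $\sum_{k}\sin^{2}(\pi\xi_{k})$. Producing this multiplier comparison, exploiting the specific geometry of the Euclidean ball together with the small-scale hypothesis, is the technical heart of the lemma; the rest is a clean application of Plancherel, Proposition \ref{8.10.1}, and Proposition \ref{8.8.2}.
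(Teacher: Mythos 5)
Your high-level framework — approximate $\mathcal{M}_N^B$ by a heat-semigroup element, control the semigroup maximal function via Proposition~\ref{8.8.2}, and bound the remainder by a square function through Proposition~\ref{8.10.1} and Plancherel — is exactly the scaffolding of the paper's proof. But there are two genuine gaps in the way you fill it in, and they break the argument.

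First, the semigroup scale is wrong. You compare $\mathfrak{m}_N^B$ with $\mathfrak{p}_{cN^2}$ for a universal $c$, but by Proposition~\ref{8.9.1} the correct Taylor-scale of $\mathfrak{m}_N^B$ near the origin is $\kappa(d,N)^2=N^2/d$, i.e.\ one must use $\mathfrak{p}_{cN^2/d}$. With $cN^2$ the quadratic terms do not match in $d$, so the claimed agreement of Taylor expansions ``at a rate independent of $d$'' is false as stated; this is a concentration-of-measure phenomenon, not a cosmetic constant.

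Second, and more fundamentally, a single global comparison to the heat semigroup cannot give a dimension-free dyadic $\ell^2$ bound. Near the corner $\xi\approx\mathbf{1}/2$ one has $\sum_j\sin^2(\pi\xi_j)\approx d$, so $\mathfrak{p}_{cN^2/d}(\xi)\approx e^{-cN^2}$ is exponentially small, while $\mathfrak{m}_N^B(\xi)$ is the (normalized) alternating lattice count $\frac{1}{|B_N\cap\mathbb{Z}^d|}\sum_{x\in B_N\cap\mathbb{Z}^d}(-1)^{\sum x_j}$, which has no reason to be small for moderate $N$. Thus $|\mathfrak{m}^B_N(\xi)-\mathfrak{p}_{cN^2/d}(\xi)|$ can be of size $\Theta(1)$ for an entire range of dyadic $N\in\mathbb{D}_{c_0}$, and the claimed multiplier bound $\min\bigl(1,N^2\sum_j\sin^2(\pi\xi_j)\bigr)^\alpha$ (which in any case supplies no decay as $N$ grows) cannot prevent the dyadic $\ell^2$ sum from growing like $\sqrt{\log d}$. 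The paper resolves this by splitting $f=f_1+f_2$ according to whether $|V_\xi|\le d/2$ or not, and using two different approximants: $\lambda_N^1(\xi)=\mathfrak{p}_{\kappa(d,N)^2}(\xi)$ on the low-frequency piece and a modulated version $\lambda_N^2(\xi)$ (centered at $\mathbf{1}/2$, built from $\cos^2$ rather than $\sin^2$) on the high-frequency piece. Proposition~\ref{8.9.3} then gives, in each regime, an error that both decays exponentially in $\kappa(d,N)^2\sum\sin^2$ (resp.\ $\cos^2$) for large $N$ and is quadratically small for small $N$, which is precisely what makes the dyadic $\ell^2$ sum dimension-free. You need this two-regime comparison; there is no single heat-semigroup substitute that works across all of $\mathbb{T}^d$.
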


	We write the operator $\mathcal{M}_{N}^{B}$  in    convolution form with the kernel 
	\begin{align*}
			\mathcal{K}_{N}^{B}(x)=\frac{1}{|B_{N}\cap \mathbb{Z}^{d}|}\sum_{y\in B_{N}\cap \mathbb{Z}^{d}}\delta_y(x),
		\end{align*}
	where $\delta_{y}$ is the Dirac's delta  at $y\in \mathbb{Z}^{d}$. For any $\xi\in\mathbb{T}^d$,  the multiplier corresponding to $\mathcal{M}_{N}^{B}$  is  given by
	\begin{align*} 
			\mathfrak{m}^{B}_{N}(\xi)=\widehat{\mathcal{K}_{N}^{B}}(\xi)=\frac{1}{|B_{N}\cap \mathbb{Z}^{d}|}\sum_{x\in B_{N}\cap \mathbb{Z}^{d} }e^{2\pi \mathrm{i} \langle x,\xi\rangle}.
		\end{align*}
The  inequality (\ref{250313.2}) is approximated by  maximal functions associated with   two  suitable multipliers:
\begin{align*} 
	&\lambda_{N}^{1}(\xi)=e^{-\kappa(d,N)^2\sum_{j=1}^{d}\sin^2(\pi \xi_{j})},\quad &\mbox{if } |V_{\xi}|\leq\frac{d}{2},\\
	&\lambda_{N}^{2}(\xi)=\frac{1}{|B_{N}\cap \mathbb{Z}^{d}|}\bigg(\sum_{x\in B_{N}\cap \mathbb{Z}^{d}}(-1)^{\sum_{j=1}^{d}x_{j}}\bigg)e^{-\kappa(d,N)^2\sum_{j=1}^{d}\cos^2(\pi \xi_{j})},\quad &\mbox{if }|V_{\xi}|\geq\frac{d}{2},
\end{align*}
 where 
\begin{align*}
\kappa(d,N)=Nd^{-\frac{1}{2}}, 
\end{align*}	
and 
\begin{align*}
	V_{\xi}=\left\{j\in \mathbb{N}_{d}: \cos(2\pi \xi_{j})<0\right\}=\left\{j\in \mathbb{N}_{d}: \frac{1}{4}<|\xi_{j}|\leq\frac{1}{2}\right\},\quad\xi\in\mathbb{T}^d.
\end{align*}

 It was shown in \cite{MR4175747} that the multipliers $\lambda_{N}^{1}(\xi)$ and $\lambda_{N}^{2}(\xi)$ are approximated  to   $\mathfrak{m}^{B}_{N}(\xi)$ in some sense as follows.
\begin{proposition}[\cite{MR4175747}]\label{8.9.3}
	Let $d, N \in \mathbb{N}$ such that $N \geq 2^{\frac{9}{2}}$ and $\kappa(d,N)\leq \frac{1}{5}$. Then for every $\xi\in\mathbb{T}^d$,  there exists a constant $0<c<1$ such that 
	\begin{itemize}
		\item[(i)] \mbox{if} $|V_{\xi}|\leq\frac{d}{2}$, \mbox{then }
		\begin{align}\label{1220.4}
			|\mathfrak{m}^{B}_{N}(\xi)-\lambda_{N}^{1}(\xi)|\leq 17\min\left\{e^{-\frac{c\kappa(d,N)^2}{400}\sum_{j=1}^{d}\sin^2(\pi \xi_{j})},\,\,\kappa(d,N)^2\sum_{j=1}^{d}\sin^2(\pi \xi_{j})\right\};
		\end{align}
		\item[(ii)] \mbox{if} $|V_{\xi}|\geq\frac{d}{2}$, \mbox{then } 
		\begin{align}\label{1220.5}
			|\mathfrak{m}^{B}_{N}(\xi)-\lambda_{N}^{2}(\xi)|\leq 17\min\left\{e^{-\frac{c\kappa(d,N)^2}{400}\sum_{j=1}^{d}\cos^2(\pi \xi_{j})},\,\,\kappa(d,N)^2\sum_{j=1}^{d}\cos^2(\pi \xi_{j})\right\}.
		\end{align}
	\end{itemize}
\end{proposition}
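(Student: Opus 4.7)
The plan is to control the difference $\mathfrak{m}_N^B(\xi) - \lambda_N^j(\xi)$ by expanding $\mathfrak{m}_N^B$ as a moment/Taylor expansion with respect to the variables $\sin(\pi\xi_k)$ (or $\cos(\pi\xi_k)$) and comparing term-by-term with the expansion of $\lambda_N^j$. The two alternative bounds in the minimum (a Gaussian one and a quadratic one) correspond to two different estimation regimes, and only one of them has to be verified in each range of $\xi$.

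First, I would reduce case (ii) to case (i). On coordinates $j\in V_\xi$, write $\xi_j = \tfrac{1}{2}+\eta_j$ with $|\eta_j|\le \tfrac{1}{4}$; then $e^{2\pi \mathrm{i} x_j\xi_j}=(-1)^{x_j}e^{2\pi\mathrm{i} x_j\eta_j}$ and $\cos^2(\pi\xi_j)=\sin^2(\pi\eta_j)$. Applying this substitution on the coordinates in $V_\xi$ turns $\mathfrak{m}_N^B(\xi)$ into a sum of the same type but twisted by $(-1)^{\sum_{j\in V_\xi}x_j}$, while turning $\lambda_N^2(\xi)$ into the analogue of $\lambda_N^1$ in the new variables. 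It therefore suffices to handle case (i) in full generality, which encompasses both.

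For case (i), I would exploit the full symmetry group of $B_N\cap\mathbb Z^d$ (sign-flips and coordinate permutations) to kill all odd moments of the uniform measure on the lattice ball. Grouping $\pm x$ produces $\cos(2\pi\langle x,\xi\rangle)$, which after repeated use of $\cos(2\pi t)=1-2\sin^2(\pi t)$ and the product-to-sum identities expresses $\mathfrak{m}_N^B(\xi)$ as $1-\kappa(d,N)^2\sum_j\sin^2(\pi\xi_j)+R(\xi)$, where the leading coefficient comes from the second-moment estimate $\mathbb{E}_{X\sim\mathrm{Unif}(B_N\cap\mathbb Z^d)}X_j^2 = N^2/d + O(\text{error})=\kappa(d,N)^2+O(\cdots)$. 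Combined with the Taylor expansion $\lambda_N^1(\xi)=1-\kappa(d,N)^2\sum_j\sin^2(\pi\xi_j)+O(\kappa^4(\sum_j\sin^2(\pi\xi_j))^2)$, this immediately gives the quadratic bound $\kappa(d,N)^2\sum_j\sin^2(\pi\xi_j)$ on $|\mathfrak{m}_N^B(\xi)-\lambda_N^1(\xi)|$ whenever that quantity is small.

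For the complementary regime, when $\kappa(d,N)^2\sum_j\sin^2(\pi\xi_j)$ is not small, one has to prove Gaussian decay of both $\mathfrak m_N^B$ and $\lambda_N^1$ separately, and show that the bound $17e^{-c\kappa^2\sum_j\sin^2(\pi\xi_j)/400}$ dominates their sum. The bound for $\lambda_N^1$ is immediate from its definition. For $\mathfrak m_N^B$, I would split the lattice ball into a product-like structure across coordinates: writing $x=(x_j)_{j=1}^d$ with $\sum x_j^2\le N^2$, I would fiber the sum over the remaining $d-1$ coordinates, which themselves live in $B_{\sqrt{N^2-x_j^2}}\cap\mathbb Z^{d-1}$, and iterate an estimate on one-dimensional Dirichlet-type kernels. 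The smallness hypothesis $\kappa(d,N)\le 1/5$ is what allows one to sum the resulting geometric tails with a constant independent of $d$. The main obstacle is this last Gaussian bound: the interplay between the discrete geometry of $B_N\cap\mathbb Z^d$ (for which one has to control $|B_N\cap\mathbb Z^d|/|B_{\sqrt{N^2-x_j^2}}\cap\mathbb Z^{d-1}|$ uniformly in $d$) and the cancellation in the oscillatory sum is exactly what makes the dimension-free approximation delicate, and it is where the specific choice $\kappa(d,N)=Nd^{-1/2}$ becomes essential.
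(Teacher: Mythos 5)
The paper itself does not prove Proposition~\ref{8.9.3}: it is imported verbatim from \cite{MR4175747} and no argument for it is reproduced here, so there is no ``paper's own proof'' to check your steps against; I address the attempt on its own terms. Two pieces are essentially sound after small repairs. For the reduction of (ii) to (i), the shift by $\tfrac12$ must be applied to \emph{all} coordinates, not just those in $V_\xi$: both the constant $(-1)^{\sum_{j=1}^d x_j}$ and the factors $\cos^2(\pi\xi_j)$ in $\lambda_N^2$ run over the full index set, and after replacing $\xi_j$ by $\xi_j-\tfrac12$ everywhere the new variable has $|V_{\,\cdot\,}|\le d/2$, while the twisted multiplier $\frac{1}{|B_N\cap\mathbb{Z}^d|}\sum_{|x|\le N}(-1)^{\sum_j x_j}e^{2\pi\mathrm{i}\langle x,\xi\rangle}$ is the object to be compared with an expression of the form $\lambda_N^1$. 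For the quadratic branch of \eqref{1220.4} you do not actually need a moment expansion: since $\|\xi\|^2\le\tfrac14\sum_j\sin^2(\pi\xi_j)$ on $\mathbb{T}^d$ and $|1-\lambda_N^1(\xi)|\le\kappa(d,N)^2\sum_j\sin^2(\pi\xi_j)$, Proposition~\ref{8.9.1} plus the triangle inequality already gives $|\mathfrak m_N^B-\lambda_N^1|\le(\tfrac{\pi^2}{2}+1)\,\kappa(d,N)^2\sum_j\sin^2(\pi\xi_j)$, comfortably under $17$. If you insist on the moment route, note that the leading coefficient of $\mathfrak m_N^B$ agrees with $\kappa(d,N)^2$ only up to a bounded factor; that is harmless here because $17$ absorbs it, but it should not be presented as an exact match.

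The genuine gap is in the Gaussian branch, which is where essentially all of the work of \cite{MR4175747} lies, and your sketch does not close it. Fibering over $x_1$ represents $\mathfrak m_N^B(\xi)$ as a weighted one-dimensional sum in $\xi_1$, but the weight at $x_1$ is the ratio $|B_{\sqrt{N^2-x_1^2}}\cap\mathbb{Z}^{d-1}|/|B_N\cap\mathbb{Z}^d|$ and the inner factor is the multiplier of a \emph{different} radius $\sqrt{N^2-x_1^2}$; there is no product structure to iterate on. To reach a bound of the form $\prod_j e^{-c\,\kappa(d,N)^2\sin^2(\pi\xi_j)}$ by iterating $d$ times you would need to control, uniformly in $d$, the product of $d$ such lattice-point ratios along a chain of shrinking radii --- but that is precisely the dimension-free lattice-point estimate the proposition is built on, not a tool one already has. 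The proof in \cite{MR4175747} avoids this by comparing the uniform measure on $B_N\cap\mathbb{Z}^d$, which carries no product structure, with a genuinely tensorized auxiliary measure on $\mathbb{Z}^d$ whose one-dimensional Fourier factor is comparable to $e^{-\kappa(d,N)^2\sin^2(\pi\xi_j)}$; the hypotheses $N\ge 2^{9/2}$ and $\kappa(d,N)\le\tfrac15$ are what make that comparison quantitative and $d$-independent. You correctly flag this as ``the main obstacle,'' but identifying the obstacle is not the same as overcoming it: in the form you present, the Gaussian half of \eqref{1220.4}--\eqref{1220.5} is asserted rather than proved.
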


%	In this subsection, we estimate the
%	maximal function corresponding to $\mathcal{M}_{N}^{B}$ with the supremum taken over the set $\mathbb{D}_{c_{0}}$. 	We write the operator $\mathcal{M}_{N}^{B}$   as   a convolution form with the kernel 
%	\begin{align*}
%		\mathcal{K}_{N}(x)=\frac{1}{|B_{N}\cap \mathbb{Z}^{d}|}\sum_{y\in B_{N}\cap \mathbb{Z}^{d}}\delta_y(x),
%	\end{align*}
%	where $\delta_{y}$ is the Dirac's delta  at $y\in \mathbb{Z}^{d}$.
%
%	For any $\xi\in\mathbb{T}^d$,  the multipliers corresponding to $\mathcal{M}_{N}^{B}$  are  given by
%	\begin{align}\label{1220.3}
%		\mathfrak{m}^{B}_{N}(\xi)=\widehat{\mathcal{K}_{N}}(\xi)=\frac{1}{|B_{N}\cap \mathbb{Z}^{d}|}\sum_{x\in B_{N}\cap \mathbb{Z}^{d} }e^{2\pi \mathrm{i} (x,\xi)}.
%	\end{align}
%	\begin{lemma}\label{1203.3}
%		Let $c_{0}> 0$ and  define $\mathbb{D}_{c_0}=\left\{N \in\mathbb{D}:1\leq N\leq c_{0}d^{\frac{1}{2}}\right\}$. Then   for every $f\in l_{2}(\mathcal{N})$, there exists a constant $C>0$   independent of the dimension such that
%		\begin{align*}
%			\|\sup_{N\in \mathbb{D}_{c_{0}}}\mathcal{M}_{N}^{B}f\|_{l_{2}(\mathcal{N})}\leq C \|f\|_{l_{2}(\mathcal{N})}.
%		\end{align*}
%	\end{lemma}

\vspace{0.2cm}

	\noindent $\mathbf{Proof~of~Lemma~\ref{1203.3}:}$  We first amuse that    $ N\in \mathbb{D}_{c_{0}}$ satisfying $N\geq 2^{\frac{9}{2}}$.
		Let $f\in L_2(\mathcal{N})$ and  decompose it as $f=f_{1}+f_{2}$, where  $\widehat{f_{1}}(\xi)=\widehat{f}(\xi)\chi_{\left\{\eta\in \mathbb{T}^{d}:\,|V_{\eta}|\leq \frac{d}{2}\right\}}(\xi)$. The  triangle inequality implies that
	\begin{align}\label{250114.2}
		\Big\|\sup_{N\in \mathbb{D}_{c_{0}}}&\mathcal{M}_{N}^Bf\Big\|_{L_{2}(\mathcal{N})}=\Big\|\sup_{N\in \mathbb{D}_{c_{0}}}\mathcal{F}^{-1}(\mathfrak{m}^{B}_{N}\widehat{f}\,)\Big\|_{L_{2}(\mathcal{N})}\nonumber\\
		\leq& \sum_{k=1}^{2}\Big\|\sup_{N\in \mathbb{D}_{c_{0}}}\mathcal{F}^{-1}(\lambda_{N}^{k}\widehat{f_{k}})\Big\|_{L_{2}(\mathcal{N})}+\sum_{k=1}^{2}\Big\|\sup_{N\in \mathbb{D}_{c_{0}}}\mathcal{F}^{-1}\big((\mathfrak{m}^{B}_{N}-\lambda_{N}^{k})\widehat{f_{k}}\big)\Big\|_{L_{2}(\mathcal{N})}.
	\end{align}
Notice that    $\lambda_{N}^{1}(\xi)=\mathfrak{p}_{\kappa(d,N)^2}(\xi)$, by   Proposition  \ref{8.8.2}, we have 
	\begin{align}\label{1220.6}
		\Big\|\sup_{N\in \mathbb{D}_{c_{0}}}\mathcal{F}^{-1}(\lambda_{N}^{1}\widehat{f_{1}})\Big\|_{L_{2}(\mathcal{N})}=\Big\|\sup_{N\in \mathbb{D}_{c_{0}}}P_{\kappa(d,N)^2}f_1\Big\|_{L_{2}(\mathcal{N})}\lesssim\|f_{1}\|_{L_{2}(\mathcal{N})} \leq\|f\|_{L_{2}(\mathcal{N})},
	\end{align}
where the implicit constant is independent of the dimension $d$. We claim that 
	\begin{align}\label{24128.3}
		\Big\|\sup_{N\in \mathbb{D}_{c_{0}}}\mathcal{F}^{-1}(\lambda_{N}^{2}\widehat{f_{2}})\Big\|_{L_{2}(\mathcal{N})}\lesssim\|f\|_{L_{2}(\mathcal{N})}  
	\end{align}
is deduced from \eqref{1220.6}. Indeed,   denote $F_{2}(x)=(-1)^{\sum_{j=1}^{d} x_{j}}f_{2}(x)$  and calculate 
	\begin{align*} 
		\widehat{F_{2}}(\xi)=&\sum_{x\in \mathbb{Z}^{d}}e^{-2\pi \mathrm{i}  \langle x,\xi\rangle  }(-1)^{\sum_{j=1}^{d} x_{j}}f_{2}(x)\\
		=&\sum_{x\in \mathbb{Z}^{d}}e^{-2\pi \mathrm{i}   \langle x,\xi\rangle}(e^{\pi \mathrm{i} })^{\sum_{j=1}^{d} x_{j}}f_{2}(x)\nonumber\\
		=&\sum_{x\in \mathbb{Z}^{d}}e^{-2\pi \mathrm{i} \langle  x,\xi-{\frac{\textbf{1}}{2}}\rangle  }f_{2}(x)
		=\widehat{f_{2}}(\xi-{\frac{\textbf{1}}{2}}),
	\end{align*}
where $\textbf{1}=(1,\cdots,1)\in\mathbb{Z}^d$. Hence,
	\begin{align*} 
		 \Big\|\sup_{N\in \mathbb{D}_{c_{0}}} \mathcal{F}^{-1}(\lambda_{N}^{2}\widehat{f_{2}})&\Big\|_{L_{2}(\mathcal{N})} 
		=	\Big\|\sup_{N\in \mathbb{D}_{c_{0}}} \int_{\mathbb{T}^{d}}\lambda_{N}^{2}(\xi)\widehat{f_{2}}(\xi)e^{2\pi \mathrm{i}  \langle \cdot,\,\xi\rangle}d\xi\Big\|_{L_{2}(\mathcal{N})} \nonumber\\
		\leq&\Big\|\sup_{N\in \mathbb{D}_{c_{0}}} \int_{\mathbb{T}^{d}} e^{-\kappa(d,N)^2\sum_{j=1}^{d}\cos^2(\pi \xi_{j})}\widehat{f_{2}}(\xi)e^{2\pi \mathrm{i}  \langle \cdot,\,\xi\rangle}d\xi\Big\|_{L_{2}(\mathcal{N})}\nonumber\\
		=&\Big\|\sup_{N\in \mathbb{D}_{c_{0}}} \mathcal{F}^{-1}(\lambda_{N}^{1}\widehat{F_{2}})\cdot e^{- \pi \mathrm{i}   \langle \cdot,\mathbf{1}\rangle}\Big\|_{L_{2}(\mathcal{N})}\nonumber\\
		\leq&\Big\|\sup_{N\in \mathbb{D}_{c_{0}}} \mathcal{F}^{-1}(\lambda_{N}^{1}\widehat{F_{2}}) \Big\|_{L_{2}(\mathcal{N})}\lesssim \|f\|_{L_{2}(\mathcal{N})},
		\end{align*}
where we use Lemma \ref{1220.7}, \eqref{1220.6} and the fact that  $	\|F_{2}\|_{L_{2}(\mathcal{N})}=\|f_{2}\|_{L_{2}(\mathcal{N})}\leq \|f\|_{L_{2}(\mathcal{N})}$.
	
	Now, we turn to estimating  the second term on the right-hand side of (\ref{250114.2}), beginning with the case $k=1$.  To simplify the notation, we denote 
	\begin{align*}
		\uppercase\expandafter{\romannumeral1} =\Big\|\sup_{N\in \mathbb{D}_{c_{0}}}\mathcal{F}^{-1}\big((\mathfrak{m}^{B}_{N}-\lambda_{N}^{1})\widehat{f_{1}}\big)\Big\|_{L_{2}(\mathcal{N})}. 
	\end{align*}
   Our task is to  show 
	\begin{align}\label{24127.5}
		\uppercase\expandafter{\romannumeral1}^2 \lesssim\|f\|^2_{L_{2}(\mathcal{N})},
	\end{align}
where the implicit constant is independent of the dimension $d$. By Proposition \ref{8.10.1} and the Plancherel formula \eqref{250609.1}, we get  
	\begin{align}\label{250306.2}
		\uppercase\expandafter{\romannumeral1}^2\leq& C^2\Big\|\Big(\sum_{N\in \mathbb{D}_{c_{0}}}\big|\mathcal{F}^{-1}\big((\mathfrak{m}^{B}_{N}-\lambda_{N}^{1})\widehat{f_{1}}\big)\big|^2\Big)^{\frac{1}{2}}\Big\|^2_{L_{2}(\mathcal{N})}\nonumber\\
		= &C^2\sum_{N\in \mathbb{D}_{c_{0}}}\|\mathcal{F}^{-1}\big((\mathfrak{m}^{B}_{N}-\lambda_{N}^{1})\widehat{f_{1}}\big)\|^2_{L_{2}(\mathcal{N})}\nonumber\\
		=&C^2\sum_{N\in \mathbb{D}_{c_{0}}}\|(\mathfrak{m}^{B}_{N}-\lambda_{N}^{1})\widehat{f_{1}}\|^2_{L_{2}(  L_{\infty}(\mathbb{T}^d)\overline{\otimes}\mathcal{M})}\nonumber\\
		=&C^2\tau\int_{\mathbb{T}^{d}}\sum_{N\in \mathbb{D}_{c_{0}}}|\mathfrak{m}^{B}_{N}(\xi)-\lambda_{N}^{1}(\xi)|^{2}|\widehat{f_{1}}(\xi)|^2d\xi.  
	\end{align}
	Denoting $2^{\frac{9}{2}}\leq N=2^{m}\in\mathbb{D}_{c_{0}}$ with $ m\in \mathbb{N}$   and using Proposition \ref{8.9.3},   the right-hand side of (\ref{250306.2}) is bounded by 
\begin{align*}
	& 17^2 C^2 \tau\int_{\mathbb{T}^d} 
	\sum_{\substack{m \in \mathbb{N} \\ 2^{9/2} \leq m \leq \log_2(c_0 d^{1/2})}} 
	\left(
	\min\left\{
	\frac{400d}{c\,2^{2m} \sum\limits_{j=1}^{d} \sin^2(\pi \xi_j)},\,
	\frac{2^{2m} \sum\limits_{j=1}^{d} \sin^2(\pi \xi_j)}{d}
	\right\}
	\right)^2
	|\widehat{f_{1}}(\xi)|^2\, d\xi
\end{align*}
Observe that
\[
\min\left\{
\frac{400d}{c\,2^{2m} \sum_{j=1}^{d} \sin^{2}(\pi\xi_{j})},\,
\frac{2^{2m} \sum_{j=1}^{d} \sin^{2}(\pi\xi_{j})}{d}
\right\}
\]
\[
=
\left\{
\begin{array}{ll}
	\displaystyle
	\frac{400d}{c\,2^{2m} \sum_{j=1}^{d} \sin^{2}(\pi\xi_{j})}, &
	\displaystyle
	m \geq \log_{2}\left(\frac{20d}{\sqrt{c} \sum_{j=1}^{d} \sin^{2}(\pi\xi_{j})} \right)^{1/2}, \\[10pt]
	\displaystyle
	\frac{2^{2m} \sum_{j=1}^{d} \sin^{2}(\pi\xi_{j})}{d}, &
	\displaystyle
	m < \log_{2}\left(\frac{20d}{\sqrt{c} \sum_{j=1}^{d} \sin^{2}(\pi\xi_{j})} \right)^{1/2}.
\end{array}
\right.
\]
Thus,	if 
	\begin{align*}
		\log_{2}\bigg(\frac{20d}{ \sqrt{c}\sum_{j=1}^{d}\sin^{2}(\pi\xi_{j})}\bigg)^{\frac{1}{2}}\leq \log_{2}c_{0}d^{\frac{1}{2}},
	\end{align*}
	 we   get
	\begin{align*}
	\uppercase\expandafter{\romannumeral1}^2
		\leq&17^2C^2\tau\int_{\mathbb{T}^{d}}\sum_{m\in\mathbb{N} \atop 2^{\frac{9}{2}}\leq m\leq \log_{2}\big(\frac{20d}{ \sqrt{c}\sum_{j=1}^{d}\sin^{2}(\pi\xi_{j})}\big)^{\frac{1}{2}}}\bigg(\frac{2^{2m}\sum_{j=1}^{d}\sin^{2}(\pi\xi_{j})}{d}\bigg)^2|\widehat{f_{1}}(\xi)|^2d\xi\\
		+&17^2C^2\tau\int_{\mathbb{T}^{d}}\sum_{m\in\mathbb{N} \atop \log_{2}\big(\frac{20d}{ \sqrt{c}\sum_{j=1}^{d}\sin^{2}(\pi\xi_{j})}\big)^{\frac{1}{2}}<m\leq \log_{2}c_{0}d^{\frac{1}{2}}}\bigg(\frac{400d}{c2^{2m}\sum_{j=1}^{d}\sin^{2}(\pi\xi_{j})}\bigg)^2|\widehat{f_{1}}(\xi)|^2d\xi\\
		\leq &2\cdot17^2C^2\frac{1280}{3c}\|f\|^2_{L_{2}(\mathcal{N})}.
	\end{align*}
	Otherwise,
	\begin{align*}
	\uppercase\expandafter{\romannumeral1}^2
		\leq 17^2C^2\tau\int_{\mathbb{T}^{d}}\sum_{m\in\mathbb{N} \atop 2^{\frac{9}{2}}\leq m\leq \log_{2} c_{0}d^\frac{1}{2}}\bigg(\frac{2^{2m}\sum_{j=1}^{d}\sin^{2}(\pi\xi_{j})}{d}\bigg)^2|\widehat{f_{1}}(\xi)|^2d\xi 
		\leq 17^2C^2\frac{1280}{3c}\|f\|^2_{L_{2}(\mathcal{N})}.
	\end{align*}
Combining the above two cases, (\ref{24127.5}) is proved. The same method also deduces that
\begin{align*}
	\Big\|\sup_{N\in \mathbb{D}_{c_{0}}}\mathcal{F}^{-1}\big((\mathfrak{m}^{B}_{N}-\lambda_{N}^{2})\widehat{f_{2}}\big)\Big\|_{L_{2}(\mathcal{N})}\lesssim \|f\|_{L_{2}(\mathcal{N})},
\end{align*}
with the implicit constant independent of the dimension $d$.  

Now we turn to the remaining case, namely $N\in \left\{ 2,4,8,16 \right\}$.  It is clear that 
\begin{align*}
	\Big\|\sup_{N\in \left\{ 2,4,8,16 \right\}}&\mathcal{M}_{N}^Bf\Big\|_{L_{2}(\mathcal{N})}\leq 2C\|f\|_{L_{2}(\mathcal{N})}.
\end{align*}
Indeed, by Proposition \ref{8.10.1}, we have 
\begin{align*}
	\Big\|\sup_{N\in \left\{ 2,4,8,16 \right\}}&\mathcal{M}_{N}^Bf\Big\|_{L_{2}(\mathcal{N})}\leq C\Big(\sum_{N\in \left\{ 2,4,8,16 \right\}}\| \mathcal{M}_{N}^Bf\|_{L_{2}(\mathcal{N})}^2\Big)^{\frac{1}{2}}\leq 2C\|f\|_{L_{2}(\mathcal{N})}.
\end{align*}
This proof is completed.  	$\hfill\square$

		\begin{lemma}\label{8.18.1}
		Let $c_{1},c_{2}>0$ and define  $\mathbb{D}_{c_{1},c_{2}}=\left\{N \in\mathbb{D}:c_{1}d^{\frac{1}{2}}\leq N\leq c_{2}d\right\}$. Then for every $f\in L_{2}(\mathcal{N})$, there exists a constant $C> 0$ independent of the dimension $d$ such that
		\begin{align*}
			\Big\|\sup_{N\in \mathbb{D}_{c_{1},c_{2}}}\mathcal{M}_{N}^{B}f\Big\|_{L_{2}(\mathcal{N})} \leq C \|f\|_{L_{2}(\mathcal{N})}.
		\end{align*}
	\end{lemma}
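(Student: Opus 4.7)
The plan is to mirror the strategy of Lemma \ref{1203.3} but with model multipliers adapted to the intermediate range $c_{1}d^{1/2}\leq N\leq c_{2}d$. I would decompose $f=f_{1}+f_{2}$ by splitting the Fourier support on $\{\xi\in\mathbb{T}^{d}: |V_{\xi}|\leq d/2\}$ and its complement, and approximate $\mathfrak{m}_{N}^{B}(\xi)$ by two suitable multipliers $\Upsilon_{N}^{1}(\xi),\Upsilon_{N}^{2}(\xi)$. Since here $\kappa(d,N)=Nd^{-1/2}$ is no longer small (it ranges over $[c_{1},c_{2}d^{1/2}]$), the approximation of Proposition \ref{8.9.3} (which required $\kappa(d,N)\leq 1/5$) is unavailable. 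Instead, the appropriate comparison, due to Bourgain--Mirek--Stein--Wr\'obel \cite{MR4175747}, is with a Gaussian-type multiplier that mimics the Fourier transform of the continuous ball average at scale $N$.

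First I would record, following \cite{MR4175747}, an intermediate-scale replacement for Proposition \ref{8.9.3}: there exist constants $c,c'>0$ and an exponent $s>0$ such that, for all $N\in\mathbb{D}_{c_{1},c_{2}}$ and all $\xi\in\mathbb{T}^{d}$,
\begin{align*}
	|\mathfrak{m}_{N}^{B}(\xi)-\Upsilon_{N}^{1}(\xi)|\lesssim \min\Bigl\{\, e^{-c\,\kappa(d,N)^{2}\sum_{j=1}^{d}\sin^{2}(\pi\xi_{j})/d},\; c'\, \kappa(d,N)^{-s}\Bigr\},\qquad |V_{\xi}|\leq d/2,
\end{align*}
and the parallel estimate with $\sin$ replaced by $\cos$ and $\Upsilon_{N}^{1}$ by $\Upsilon_{N}^{2}$ when $|V_{\xi}|\geq d/2$. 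The key feature is that the right-hand side is \emph{square-summable} over the dyadic $N\in\mathbb{D}_{c_{1},c_{2}}$: the Gaussian factor is small for frequencies away from $0$ (resp.\ from $\tfrac{\mathbf{1}}{2}$), while the power-decay factor is small when $\kappa(d,N)$ is large. The main terms $\sup_{N\in\mathbb{D}_{c_{1},c_{2}}}|\mathcal{F}^{-1}(\Upsilon_{N}^{k}\widehat{f_{k}})|$ are controlled in $L_{2}(\mathcal{N})$ dimension-freely: for $k=1$ this reduces to Proposition \ref{8.8.2} applied at time $t\sim \kappa(d,N)^{2}$ (possibly combined with a trivial sum over the additional scale-dependent factor), and for $k=2$ one uses the shift trick $F_{2}(x)=(-1)^{\sum x_{j}}f_{2}(x)$ exactly as in (\ref{24128.3}) to transfer the estimate from $\Upsilon_{N}^{1}$ to $\Upsilon_{N}^{2}$.

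For the error terms I would repeat the argument already developed in (\ref{250306.2}): Proposition \ref{8.10.1} gives
\begin{align*}
	\Big\|\sup_{N\in\mathbb{D}_{c_{1},c_{2}}}\mathcal{F}^{-1}\big((\mathfrak{m}_{N}^{B}-\Upsilon_{N}^{k})\widehat{f_{k}}\big)\Big\|_{L_{2}(\mathcal{N})}^{2}\lesssim \tau\!\int_{\mathbb{T}^{d}}\sum_{N\in\mathbb{D}_{c_{1},c_{2}}}|\mathfrak{m}_{N}^{B}(\xi)-\Upsilon_{N}^{k}(\xi)|^{2}|\widehat{f_{k}}(\xi)|^{2}d\xi,
\end{align*}
after invoking the operator-valued Plancherel formula (\ref{250609.1}). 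One then splits the dyadic sum according to whether $\kappa(d,N)^{2}\sum_{j}\sin^{2}(\pi\xi_{j})/d$ is $\leq 1$ or $>1$; the Gaussian branch yields a geometric series summing to an $O(1)$ multiple of $1$ pointwise, and the power-decay branch sums similarly, giving in total a dimension-free $L_{2}(\mathcal{N})$ bound on the error.

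The principal obstacle is the first step: producing the approximation $\Upsilon_{N}^{k}$ with error bounds uniform in $d$ and square-summable over $\mathbb{D}_{c_{1},c_{2}}$. Once one imports this from \cite{MR4175747}, the semi-commutative upgrade is essentially mechanical, because the only nontrivial operator-valued ingredients needed are the Plancherel identity (\ref{250609.1}), the square-function bound of Proposition \ref{8.10.1}, the semigroup maximal inequality of Proposition \ref{8.8.2}, and the change-of-frequency identities in Lemma \ref{1220.7}, all of which are already available.
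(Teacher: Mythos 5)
There is a genuine gap in the key scalar input you invoke. You assume an ``intermediate-scale replacement for Proposition~\ref{8.9.3}'' of the form $|\mathfrak{m}_{N}^{B}(\xi)-\Upsilon_{N}^{1}(\xi)|\lesssim \min\{e^{-c\kappa^{2}\sum_{j}\sin^{2}(\pi\xi_{j})/d},\ \kappa^{-s}\}$ and attribute it to \cite{MR4175747}, but no estimate of this shape is established there, and it is unlikely to hold: near $\xi=0$ the exponent tends to $0$ and the minimum is governed by $\kappa^{-s}$, so the claim amounts to \emph{uniform} closeness $|\mathfrak{m}_{N}^{B}-\Upsilon_{N}^{1}|\lesssim\kappa^{-s}$ on the whole region $|V_{\xi}|\leq d/2$, which is far stronger than the near-origin quadratic bound $|\mathfrak{m}_{N}^{B}(\xi)-1|\leq 2\pi^{2}\kappa^{2}\|\xi\|^{2}$ that \cite{MR4175747} actually proves. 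Moreover the stray $/d$ in the exponent makes it at most $\kappa^{2}$, so at the lower end $\kappa\sim c_{1}$ the ``Gaussian'' factor is bounded below by a constant for \emph{all} $\xi$, contradicting your assertion that it decays away from the origin. The square-summability step therefore rests on an unsupported lemma.

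The paper's own remark points to the correct decomposition: Propositions~\ref{8.9.1} and~\ref{8.9.2}, namely $|\mathfrak{m}_{N}^{B}(\xi)-1|\leq 2\pi^{2}\kappa^{2}\|\xi\|^{2}$ near the origin and $|\mathfrak{m}_{N}^{B}(\xi)|\lesssim\kappa^{-1}\|\xi\|^{-1}+\kappa^{-1/7}$ away from it (valid once $10\leq\kappa\leq 50d^{1/2}$, which covers all but a bounded number of the scales in $\mathbb{D}_{c_{1},c_{2}}$). Pairing these with the elementary $1-\mathfrak{p}_{\kappa^{2}}(\xi)\leq\pi^{2}\kappa^{2}\|\xi\|^{2}$ and $\mathfrak{p}_{\kappa^{2}}(\xi)\leq e^{-4\kappa^{2}\|\xi\|^{2}}$ yields $|\mathfrak{m}_{N}^{B}(\xi)-\mathfrak{p}_{\kappa^{2}}(\xi)|\lesssim\min\{\kappa^{2}\|\xi\|^{2},\ \kappa^{-1}\|\xi\|^{-1}+\kappa^{-1/7}+e^{-4\kappa^{2}\|\xi\|^{2}}\}$, whose square sums over the dyadic $\kappa\in[c_{1},c_{2}d^{1/2}]$ to an absolute constant (split at $\kappa\|\xi\|\sim 1$; the $\kappa^{-1/7}$ piece is a geometric series). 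The missing idea in your argument is precisely this $\kappa^{2}\|\xi\|^{2}$ near-origin branch. Note also that the $|V_{\xi}|$-split and the $(-1)^{\sum x_{j}}$ parity trick are superfluous in this range, since Proposition~\ref{8.9.2} already makes $\mathfrak{m}_{N}^{B}$ small near $\xi=\tfrac{\mathbf{1}}{2}$ (there $\|\xi\|\sim d^{1/2}$, so $\kappa^{-1}\|\xi\|^{-1}$ is tiny). Your catalogue of the semi-commutative tools --- the operator-valued Plancherel identity~\eqref{250609.1}, Proposition~\ref{8.10.1}, Proposition~\ref{8.8.2}, Lemma~\ref{1220.7} --- is the right one; the flaw is entirely in the scalar multiplier estimate you import.
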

	
	 The strategy of Lemma \ref{8.18.1} is similar to the proof of  Lemma \ref{1203.3}. We omit the details here. The estimates of the multiplier $\mathfrak{m}^{B}_{N}(\xi)$ at the origin and at infinity, as shown in Proposition \ref{8.9.1} and Proposition \ref{8.9.2}, play a crucial role.

	\begin{proposition}[\cite{MR4175747}]\label{8.9.1}
		Let $d, N \in \mathbb{N}$, then for every $\xi\in\mathbb{T}^d$, we have
		\begin{align*}
			|\mathfrak{m}^{B}_{N}(\xi)-1|\leq 2\pi^2 \kappa(d,N)^2\|\xi\|^2.
		\end{align*}
	\end{proposition}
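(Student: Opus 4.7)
The plan is to estimate $\mathfrak{m}^B_N(\xi)-1$ directly by exploiting the symmetries of the Euclidean ball $B_N$. Writing
\[
\mathfrak{m}^{B}_{N}(\xi)-1=\frac{1}{|B_{N}\cap\mathbb{Z}^d|}\sum_{x\in B_N\cap\mathbb{Z}^d}\big(e^{2\pi\mathrm{i}\langle x,\xi\rangle}-1\big),
\]
I would first pair $x$ with $-x$ (using that $B_N\cap\mathbb{Z}^d$ is symmetric under $x\mapsto -x$) to rewrite the inner term as $\cos(2\pi\langle x,\xi\rangle)-1$, and then invoke the elementary inequality $|\cos t-1|\le t^2/2$. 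This reduces the task to bounding
\[
\frac{2\pi^2}{|B_N\cap\mathbb{Z}^d|}\sum_{x\in B_N\cap\mathbb{Z}^d}\langle x,\xi\rangle^2.
\]

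The key observation, which produces the crucial factor $d^{-1}$ hidden in $\kappa(d,N)^2=N^2/d$, is to expand $\langle x,\xi\rangle^2=\sum_{j,k}x_jx_k\xi_j\xi_k$ and use the \emph{full} symmetry group of the Euclidean ball. Since $B_N$ is invariant under arbitrary sign changes of the coordinates, the mixed sums $\sum_{x\in B_N\cap\mathbb{Z}^d}x_jx_k$ vanish for $j\neq k$. Since $B_N$ is also invariant under coordinate permutations, the diagonal sums $S:=\sum_{x\in B_N\cap\mathbb{Z}^d}x_j^2$ are independent of $j$. Therefore
\[
\sum_{x\in B_N\cap\mathbb{Z}^d}\langle x,\xi\rangle^2=S\sum_{j=1}^{d}\xi_j^2=S\,\|\xi\|^2,
\]
where I use that on $\mathbb{T}^d=[-1/2,1/2)^d$ the periodic norm $\|\xi\|$ equals the Euclidean norm $|\xi|$.

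Finally, to control $S$, I would sum the identity $dS=\sum_{x\in B_N\cap\mathbb{Z}^d}|x|^2$ and use that $|x|\le N$ for $x\in B_N$, yielding $S\le N^2|B_N\cap\mathbb{Z}^d|/d$. Plugging this back gives
\[
|\mathfrak{m}^B_N(\xi)-1|\le\frac{2\pi^2}{|B_N\cap\mathbb{Z}^d|}\cdot\frac{N^2}{d}\,|B_N\cap\mathbb{Z}^d|\,\|\xi\|^2=2\pi^2\kappa(d,N)^2\|\xi\|^2,
\]
which is exactly the claim. The only step that requires any real thought is recognizing that the dimensional improvement must come from the symmetry-induced averaging $\sum_j\xi_j^2\sum_x x_j^2$ rather than from a Cauchy--Schwarz bound $\langle x,\xi\rangle^2\le|x|^2\|\xi\|^2$, which would only give $2\pi^2 N^2\|\xi\|^2$ and miss the $1/d$ factor essential for dimension-free estimates. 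Everything else is a routine Taylor expansion.
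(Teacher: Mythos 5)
Your proposal is correct, and it is essentially the standard argument for this estimate: pair $x$ with $-x$ to replace $e^{2\pi\mathrm{i}\langle x,\xi\rangle}$ by $\cos(2\pi\langle x,\xi\rangle)$, invoke $1-\cos t\le t^2/2$, and then use the sign-change and permutation symmetries of $B_N\cap\mathbb{Z}^d$ to reduce $\sum_x\langle x,\xi\rangle^2$ to $S\,\|\xi\|^2$ with $S\le N^2|B_N\cap\mathbb{Z}^d|/d$. Since the paper cites Proposition \ref{8.9.1} from \cite{MR4175747} without reproducing a proof, there is no in-text argument to compare against, but your reasoning is precisely the one used in that reference, and your observation that the dimensional gain must come from the symmetry-induced diagonalization rather than from Cauchy--Schwarz is exactly the point of the proof.
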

	\begin{proposition}[\cite{MR4175747}]\label{8.9.2}
		Let $d, N \in \mathbb{N}$. If $10\leq k(d, N)\leq50d^{\frac{1}{2}}$, then there exists a constant $C>0$ such that for all $\xi\in\mathbb{T}^d$, we have
		\begin{align*}
			|\mathfrak{m}^{B}_{N}(\xi)|\leq C\big( \kappa(d,N)^{-1}\|\xi\|^{-1} +\kappa(d,N)^{-\frac{1}{7}}\big).
		\end{align*}
	\end{proposition}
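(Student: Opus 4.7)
The plan is to reduce the discrete Fourier multiplier to a continuous object via Poisson summation, then exploit dimension-free asymptotics of the Fourier transform of the indicator of the Euclidean ball. Specifically,
\[
|B_N\cap\mathbb{Z}^d|\,\mathfrak{m}^B_N(\xi) \;=\; \sum_{x\in\mathbb{Z}^d}\chi_{B_N}(x)\,e^{2\pi \mathrm{i}\langle x,\xi\rangle} \;=\; N^d\sum_{n\in\mathbb{Z}^d}\widehat{\chi_B}\bigl(N(n-\xi)\bigr),
\]
so the problem splits into (a) estimating the principal $n=0$ term, (b) controlling the Poisson tail over $n\neq 0$, and (c) comparing $|B_N\cap\mathbb{Z}^d|$ with $|B_N|=c_d N^d$.

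For (a), I would use the Bessel representation of $\widehat{\chi_B}$ together with the classical dimension-free estimate
\[
\frac{|\widehat{\chi_B}(\eta)|}{|B|} \;\lesssim\; \frac{1}{\sqrt{d}\,|\eta|}
\]
valid in the regime where $|\eta|\sqrt{d}$ is not too small; this is one of the standard dimension-free bounds on the Euclidean-ball transform used in the continuous theory. Substituting $\eta=N\xi$ and recalling $\kappa(d,N)=N/\sqrt{d}$ yields the first term of the claimed bound, $\lesssim \kappa(d,N)^{-1}\|\xi\|^{-1}$.

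For (b) and (c), I would introduce a smooth mollifier $\psi_r$ of unit mass and length-scale $r\geq 1$ and replace $\chi_{B_N}$ by $\chi_{B_N}\ast\psi_r$. On the Fourier side, the tail sum over $n\neq 0$ is now damped by $\widehat{\psi_r}$, making it dimension-freely small once $r$ is moderately large. The error introduced by mollification is controlled by the number of lattice points in a shell of thickness $r$ around $\partial B_N$, which in the prescribed regime $10\leq \kappa(d,N)\leq 50\sqrt{d}$ admits a dimension-free estimate of order $r/\kappa(d,N)+\kappa(d,N)^{-1}$. Choosing $r=\kappa(d,N)^{\alpha}$ with $\alpha\in(0,1)$ and balancing the three sources (principal decay, shell error, tail error) yields the sub-optimal exponent $1/7$ that appears in the statement.

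The main obstacle is keeping every quantitative step dimension-free. Each of the three ingredients, the Bessel asymptotic, the Poisson tail, and the lattice-shell estimate, usually comes with constants depending on $d$, and only in the window $10\leq \kappa(d,N)\leq 50\sqrt{d}$ do they all happen to be uniform. The particular exponent $1/7$ is not canonical but the output of the specific three-way optimization above; the delicate part of the argument is verifying that this optimization yields an admissible positive exponent, since without it the intermediate-scale estimate in Lemma~\ref{8.18.1} cannot be closed.
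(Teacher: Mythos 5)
This proposition is imported from \cite{MR4175747} and the paper offers no proof of its own, so strictly there is nothing in-text to compare against; I will assess your plan on its own merits and against the cited source.

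Your outline is plausible in skeleton (principal Fourier term, Poisson tail, lattice-count comparison), but it has real gaps at every quantitative step, and the most serious one is step (c). In the regime $10 \leq \kappa(d,N) \leq 50\sqrt{d}$, i.e.\ $10\sqrt{d} \leq N \leq 50d$, the comparison of $|B_N\cap\mathbb{Z}^d|$ with $|B_N|$ does \emph{not} hold with dimension-free constants. The naive cube covering gives only $|B_{N-\sqrt{d}/2}| \leq |B_N\cap\mathbb{Z}^d| \leq |B_{N+\sqrt{d}/2}|$, and for $N=10\sqrt{d}$ the two outer volumes differ from $|B_N|$ by factors $(19/20)^d$ and $(21/20)^d$, which are exponentially far from $1$ in $d$. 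You cannot divide by $|B_N\cap\mathbb{Z}^d|$ and silently replace it by $|B|N^d$; the whole point of the intermediate-scale regime is that this replacement fails, which is precisely why the cited source compares the discrete symbol to a \emph{Gaussian-type} symbol (the $\lambda_N^1,\lambda_N^2$ appearing in Proposition~\ref{8.9.3} of this paper) rather than to the normalized continuous ball transform. So the $n=0$ term of your Poisson sum is not controlled by the Bessel decay of $\widehat{\chi_B}/|B|$ without a separate and delicate treatment of the lattice count, which in \cite{MR4175747} is itself a nontrivial lemma.

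The remaining steps are also not justified at the level needed. The ``shell of thickness $r$'' has relative volume $\approx rd/N = r\sqrt{d}/\kappa(d,N)$, which carries an extra $\sqrt{d}$ compared with the $r/\kappa + \kappa^{-1}$ you assert, so the quantity you intend to balance is not what you wrote; the Poisson tail over $n\neq 0$ must beat the $\sim C^d$ lattice points at radius $O(1)$ and the even larger count at radius $O(\sqrt{d})$, which is feasible with Gaussian mollification in this regime but needs to be written out; and the exponent $1/7$ is simply asserted to ``emerge from the three-way optimization'' without exhibiting the three bounds whose balance actually produces it. You are right that $1/7$ is non-canonical and that producing \emph{some} positive exponent is the crux, but the argument as sketched does not yet close: until the lattice-count issue in (c) is addressed, the balance cannot even be set up.
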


	\begin{remark}
		 Let $C_{1},C_{2}>0$ and  define $\mathbb{D}_{C_{1},C_{2}}=\left\{N \in\mathbb{D}:C_{1}d^{\frac{1}{q}}\leq N\leq C_{2}d\right\}$. Fix $2\leq q<\infty$, then there exists a constant $C_{q}> 0$ independent of the dimension $d$ such that for every $f\in L_{2}(\mathcal{N})$, we have 
		\begin{align*}
			\Big\|\sup_{N\in \mathbb{D}_{C_{1},C_{2}}}\mathcal{M}_{N}^{B^{q}}f\Big\|_{L_{2}(\mathcal{N})} \leq C_{q} \|f\|_{L_{2}(\mathcal{N})}.
		\end{align*}
	\end{remark}
	\begin{proof}
		Using a strategy similar to that in Lemma \ref{1203.3}, and applying \cite[Propositions 4.1 and 4.2]{MR4591824}, we obtain the desired remark.
	\end{proof}
	\subsection{ The large scale case}
	This subsection aims to proving  Theorem \ref{8.18.2}, i.e.,  the dimension-free estimates for the  maximal functions corresponding to $\mathcal{M}_{N}^{B}$ with the supremum taken over the set $\mathbb{D}_{c_{3},\infty}$. The following estimate for the number of lattice points in $B_{N}$  plays a key role.
\begin{lemma}\label{24128.1}
	There exist constants $C_{1},C_{2}>0$  such that for every $N\geq C_{1}d$, we have
	\begin{align*}
		C_{2}^{-1}|B_{N}|\leq|B_{N}\cap\mathbb{Z}^d|\leq 2e^{\frac{1}{8C_{1}^2}}|B_{N}|.
	\end{align*}
\end{lemma}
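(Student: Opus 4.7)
The plan is to use a half-cube covering argument that sharpens the naive cube inclusion enough to produce dimension-free constants. Attach to each $x\in\mathbb{Z}^d$ its Voronoi cell $Q(x)=x+[-\tfrac12,\tfrac12]^d$, and for $x\neq 0$ split it along the hyperplane $\{\langle z,x\rangle=0\}$ into the \emph{inward half}
\[
Q^{-}(x)=\bigl\{x+z:z\in[-\tfrac12,\tfrac12]^d,\ \langle x,z\rangle\le 0\bigr\}
\]
and the complementary $Q^{+}(x)$. The separating hyperplane passes through the center of $[-\tfrac12,\tfrac12]^d$, so each half has volume $\tfrac12$ and the families $\{Q^{\pm}(x)\}_{x\in\mathbb{Z}^d}$ remain pairwise disjoint.

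The key estimate for the upper bound is that, for $x\in B_N\cap\mathbb{Z}^d$ and $y=x+z\in Q^{-}(x)$, the sign condition $\langle x,z\rangle\le 0$ gives $|y|^2\le |x|^2+|z|^2\le N^2+d/4$; hence $Q^{-}(x)\subset B_{\sqrt{N^2+d/4}}$. Summing disjoint half-cells yields $\tfrac12|B_N\cap\mathbb{Z}^d|\le(1+d/(4N^2))^{d/2}|B_N|$. For $N\ge C_1 d$, the elementary estimate $\tfrac{d}{2}\log(1+d/(4N^2))\le d^2/(8N^2)\le 1/(8C_1^2)$ then produces the claimed $|B_N\cap\mathbb{Z}^d|\le 2\exp(1/(8C_1^2))|B_N|$.

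For the lower bound, I would start from the classical cube inclusion $B_{N-\sqrt{d}/2}\subset\bigcup_{x\in B_N\cap\mathbb{Z}^d}Q(x)$ and refine it via the involution $y\mapsto 2\operatorname{round}(y)-y$ on Voronoi cells, which pairs points of $B_N$ so that a dimension-free proportion of pairs falls entirely inside $B_N$ once $C_1$ is taken sufficiently large; this ensures $|B_N\cap\mathbb{Z}^d|\ge C_2^{-1}|B_N|$ for an absolute $C_2$. The principal difficulty is the upper bound: the crude inclusion $B_N\cap\mathbb{Z}^d\subset\bigcup Q(x)\subset B_{N+\sqrt{d}/2}$ only gives the factor $(1+\sqrt{d}/(2N))^d\le e^{\sqrt{d}/(2C_1)}$, which blows up with the dimension. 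The half-cube refinement is precisely what replaces the problematic first-order term $\sqrt{d}/(2N)$ by the quadratically smaller $d/(4N^2)$ and restores a dimension-free exponent.
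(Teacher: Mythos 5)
The paper states this lemma without a proof of its own (it is invoked in the proof of Theorem~\ref{8.18.2} and comes from the cited works of Bourgain--Mirek--Stein--Wr\'obel and Kosz--Mirek--Plewa--Wr\'obel), so there is no in-text argument to compare against; I evaluate your proposal on its own terms.

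Your upper bound is correct, complete, and produces exactly the stated constant. Splitting each Voronoi cube $Q(x)$ by the hyperplane $\langle x,\cdot - x\rangle=0$ into two half-cells of volume $1/2$, noting that for $x\in B_N\cap\mathbb Z^d$ the inward half lies in $B_{\sqrt{N^2+d/4}}$, and summing pairwise disjoint pieces gives
\[
\tfrac12\,|B_N\cap\mathbb Z^d|\le |B_{\sqrt{N^2+d/4}}|=\Bigl(1+\tfrac{d}{4N^2}\Bigr)^{d/2}|B_N|\le e^{d^2/(8N^2)}|B_N|\le e^{1/(8C_1^2)}|B_N|
\]
for $N\ge C_1 d$, which is the claim. (The degenerate cell at $x=0$ has full volume $1$ rather than $1/2$, but $Q(0)\subset B_{\sqrt d/2}\subset B_{\sqrt{N^2+d/4}}$, so nothing is lost.) Your remark that the half-cube refinement converts the fatal first-order stick-out $\sqrt d/(2N)$ into the benign quadratic $d/(4N^2)$ is exactly the right way to read the computation.

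The lower bound, however, has a genuine gap, and it is not a small one. The assertion that ``a dimension-free proportion of pairs $\{y,\,2\operatorname{round}(y)-y\}$ falls entirely inside $B_N$'' is precisely where all the difficulty lives, and you leave it unproved. The involution $\sigma(y)=2\operatorname{round}(y)-y$ is a reasonable device: with $x=\operatorname{round}(y)$ and $z=y-x$ one has $|y|^2+|\sigma(y)|^2=2(|x|^2+|z|^2)$, so $y,\sigma(y)\in B_N$ forces $x\in B_N$; hence $A:=\{y:\ y,\sigma(y)\in B_N\}\subset\bigcup_{x\in B_N\cap\mathbb Z^d}Q(x)$, and $|A|\ge c\,|B_N|$ with $c$ dimension-free would indeed give the lower bound. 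But estimating $|A|$ by counting cells fully contained in it fails: $Q(x)\subset A$ requires $\sup_{z\in[-1/2,1/2]^d}\bigl(|x|^2+|z|^2+2|\langle x,z\rangle|\bigr)\le N^2$, which along diagonal directions forces $|x|\le N-\sqrt d/2$, so this route only yields $|A|\ge |B_{N-\sqrt d}|$, a quantity that can be as small as roughly $e^{-\sqrt d/C_1}|B_N|$ when $N\approx C_1 d$ and so vanishes with $d$. The obstruction is real: unlike the inward halves $Q^-(x)$, a cell centred at a lattice point with $|x|$ just above $N$ can protrude as far as $\sqrt d/2$ into $B_N$ along diagonal directions, so the shell that Voronoi cells can straddle has the bad thickness $\sqrt d/2$. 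A correct proof must quantify that such worst-case directions are exceptional, for instance by a concentration estimate exploiting that $\langle x/|x|,z\rangle$ over $z\in[-1/2,1/2]^d$ has mean $0$ and variance $1/12$ uniformly in $d$; that nontrivial probabilistic input is missing from your proposal, and it is where the actual work of the lemma lies.
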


	\vspace{0.2cm}

	 \noindent $\mathbf{Proof~of~Theorem~\ref{8.18.2}:}$
		Given a function $f: \mathbb{Z}^d \rightarrow L_{p}(\mathcal{M})$, we define its extension to  $ \mathbb{R}^d$   by setting 
		\begin{align}\label{250312.2}
			F(x)=\sum_{y\in \mathbb{Z}^d}f(y)\chi_{y+Q_{\frac{1}{2}}}(x),
		\end{align}
	where $Q_{\frac{1}{2}}=[-\frac{1}{2},\frac{1}{2}]^d$.  One can  easily   verify  that 
		\begin{align*}
		F(x)=f(x),~~\forall x\in\mathbb{Z}^d,\quad \mbox{and}\quad	\|F\|_{L_{p}( L_{\infty}(\mathbb{R}^d)\overline{\otimes}\mathcal{M})}=\|f\|_{L_{p}(\mathcal{N})}.
		\end{align*}
 Without loss of generality,  we assume that $f$ is positive, which in turn implies the positivity of $F$.

	 For every $N\geq C_{1}d$, with $C_{1}$ as in Lemma \ref{24128.1}, we denote $N_{1}=(N^2+\frac{d}{4})^{\frac{1}{2}}$.  Then for $z\in Q_{\frac{1}{2}}$ and $y\in B_{N}$, it is obvious that
		\begin{align*} 
			|y+z|^2=|y|^2+|z|^2+2\langle y,z\rangle  \leq N_{1}^2 
		\end{align*}
		on the set $\left\{z\in Q_{\frac{1}{2}}: \langle y,z\rangle \leq 0\right\}$, which has measure $\frac{1}{2}$.  For all $x\in \mathbb{Z}^d$, we have
		\begin{align}\label{241021.8}
			\mathcal{M}_{N}^{B}f(x)=&\frac{1}{|B_{N}\cap \mathbb{Z}^{d}|}\sum_{y\in B_{N}\cap \mathbb{Z}^{d}}f(x+y)\chi_{B_{N}}(y)\nonumber\\
			=&\frac{2}{|B_{N}\cap \mathbb{Z}^{d}|}\sum_{y\in B_{N}\cap \mathbb{Z}^{d}}f(x+y)\int\chi_{\left\{z\in Q_{\frac{1}{2}}: \langle y,z\rangle \leq 0\right\}}(z)dz\nonumber\\
			\leq & \frac{2}{|B_{N}\cap \mathbb{Z}^{d}|}\sum_{y\in B_{N}\cap \mathbb{Z}^{d}}f(x+y)\int\chi_{\left\{z\in Q_{\frac{1}{2}}: |y+z|\leq N_{1}\right\}}(z)dz\nonumber\\
			\leq&\frac{2}{|B_{N}\cap \mathbb{Z}^{d}|}\sum_{y\in  \mathbb{Z}^{d}}f(x+y)\int_{Q_{\frac{1}{2}}}\chi_{B_{N_{1}}}(y+z)dz.
			\end{align}
	By Lemma \ref{24128.1} and a change of variables, the right-hand side of (\ref{241021.8}) is bounded by 
		\begin{align}\label{250209.7}
			 	\frac{2C_{2}}{|B_{N}|}\sum_{y\in \mathbb{Z}^{d}}f(x+y)\int_{Q_{\frac{1}{2}}}&\chi_{B_{N_{1}}}(y+z)dz
			=\frac{2C_{2}}{|B_{N}|}\sum_{y\in  \mathbb{Z}^{d}}f(y)\int_{x+B_{N_{1}}}\chi_{y+Q_{\frac{1}{2}}}(z)dz.
		\end{align}
Recalling  function $F$ given in (\ref{250312.2}) and combining (\ref{241021.8}) with (\ref{250209.7}), we obtain
\begin{align}\label{250312.1}
		\mathcal{M}_{N}^{B}f(x)\leq &\frac{2C_{2}}{|B_{N}|}\int_{x+B_{N_{1}}}F(z)dz\nonumber\\
		=&2C_{2}{\bigg(\frac{N_{1}}{N}\bigg)}^{d}\frac{1}{|B_{N_{1}}|}\int_{B_{N_{1}}}F(x+z)dz\nonumber\\
		=&2C_{2}{\bigg(\frac{N_{1}}{N}\bigg)}^{d} \mathbf{M}_{N_{1}}^{B}F(x)
		\leq 2C_{2}e^{\frac{1}{8C_{1}^2}} \mathbf{M}_{N_{1}}^{B}F(x),
\end{align}
where the last inequality follows from
		\begin{align*} 
			{\bigg(\frac{N_{1}}{N}\bigg)}^{d}=\bigg(1+\frac{d}{4N^2}\bigg)^{\frac{d}{2}}\leq \bigg(1+\frac{1}{4NC_{1}}\bigg)^{\frac{N}{2C_{1}}}\leq e^{\frac{1}{8C_{1}^2}}.
		\end{align*}

		Now, taking $N_{2}=(N_{1}^2+\frac{d}{4})^{\frac{1}{2}}$, then for $y\in Q_{\frac{1}{2}}$ and $z\in B_{N_{1}}$,  one has
		\begin{align*}
			|y+z|^2=|y|^2+|z|^2+2\langle z,y\rangle\leq N_{2}^2
		\end{align*}
		on the set $\left\{y\in Q_{\frac{1}{2}}: \langle z,y\rangle\leq 0\right\}$, which has measure $\frac{1}{2}$. Arguing in a similar way as proving (\ref{241021.8}) and using Fubini's theorem, we see that 
		\begin{align}\label{250209.1}
			 \mathbf{M}_{N_{1}}^{B}F(x)
			\leq&\frac{2}{|B_{N_{1}}|}\int_{\mathbb{R}^d}F(x+z)\int_{Q_{\frac{1}{2}}}\chi_{B_{N_{2}}}(z+y)dydz\nonumber\\
			= & {\bigg(\frac{N_{2}}{N_{1}}\bigg)}^{d}\frac{2}{|B_{N_{2}}|}\int_{Q_{\frac{1}{2}}}\int_{\mathbb{R}^d}F(x+z-y)\chi_{B_{N_{2}}}(z)dzdy \nonumber\\
			\leq &2e^{\frac{1}{8C_{1}^2}}\int_{x+Q_{\frac{1}{2}}} \mathbf{M}_{N_{2}}^{B}F(y)dy.
			\end{align}
	In view of (\ref{250312.1}) and (\ref{250209.1}), we arrive at
	\begin{align}\label{250209.5}
			\mathcal{M}_{N}^{B}f(x)\leq  4C_{2}e^{\frac{1}{4C_{1}^2}}\int_{x+Q_{\frac{1}{2}}} \mathbf{M}_{N_{2}}^{B}F(y)dy. 
	\end{align}
 By the dimension-free estimate of the semi-commutative continuous Hardy-Littlewood maximal inequality on $L_{p}( L_{\infty}(\mathbb{R}^d)\overline{\otimes}\mathcal{M})$ (see \cite[Theorem 1.2]{MR3275742}),   there exist a positive  function  $G\in L_p(L_{\infty}(\mathbb{R}^d)\overline{\otimes}\mathcal{M})$ and a constant $C_{p}>0$ independent of  the dimension $d$  such that  
		\begin{align}\label{250209.3}
			 \mathbf{M}_{t}^{B}F(y)\leq G(y),\quad\forall~ t>0,
		\end{align}
		with 
		\begin{align*} 
			\|G\|_{L_p(L_{\infty}(\mathbb{R}^d)\overline{\otimes}\mathcal{M})}\leq C_{p}\|F\|_{L_p(L_{\infty}(\mathbb{R}^d)\overline{\otimes}\mathcal{M})}.
		\end{align*}
 Therefore, combining (\ref{250209.5}) with (\ref{250209.3}),   and applying  (\ref{1223.3}), we have
		\begin{align}\label{250702.1}
			\Big\|\sup_{N>c_{3}d} 	\mathcal{M}_{N}^{B}f\Big\|^p_{{L_p(\mathcal{N})}}
			\leq \Big(4C_{2}e^{\frac{1}{4C_{1}^2}}\Big)^p&  \Big\|\int_{x+Q_{\frac{1}{2}}}G(y)dy\Big\|_{L_p(\mathcal{N})}^p.
		\end{align}
	Invoking (\ref{3802}), it follows that
\begin{align*}
	\int_{x+Q_{\frac{1}{2}}}G(y)dy\leq \Big (\int_{x+Q_{\frac{1}{2}}}G(y)^pdy\Big)^{\frac{1}{p}},
\end{align*}
which implies that the right-hand side of (\ref{250702.1}) is bounded by 
	\begin{align*}
		 \Big\|\Big(\int_{x+Q_{\frac{1}{2}}}G(y)^pdy\Big)^{\frac{1}{p}}\Big\|_{L_p(\mathcal{N})}^p
		=& \sum_{x\in \mathbb{Z}^d}\tau\int_{x+Q_{\frac{1}{2}}}G(y)^pdy\\
		=&  \tau\int_{\mathbb{R}^d}G(y)^pdy
		= \|G\|^p_{L_p(L_{\infty}(\mathbb{R}^d)\overline{\otimes}\mathcal{M})}\\
	\leq& C_{p} \|F\|^p_{L_p(L_{\infty}(\mathbb{R}^d)\overline{\otimes}\mathcal{M})}= C_{p} \|f\|^p_{L_p(\mathcal{N})}.
	\end{align*}
	We complete the proof.
$\hfill\square$
	\begin{remark}
		Let $1<p<\infty$ and $f\in L_{p}(\mathcal{N})$. Fix $q\in(2,\infty)$, there is a constant $C_q >0$ independent of the dimension $d$ such that
		\begin{align*}
			\Big\|\sup_{N>cd}\mathcal{M}_{N}^{B^q}f\Big\|_{L_{p}(\mathcal{N})}\leq C_{q}\|f\|_{L_{p}(\mathcal{N})}.
		\end{align*}
	\end{remark}
	\begin{proof}
			Using a strategy similar to that in Theorem \ref{8.18.2}, and applying \cite[Lemma 3.1 \& Lemma 3.3]{MR4591824}, this remark is obtained. We omit the details here.
	\end{proof}

	\section{Transference principle to the ergodic setting}\label{1222.2}

	In this section, we establish  the  transference principles in the ergodic setting. The first  allows us to derive $L_p$   estimates  for the maximal ergodic  Hardy-Littlewood operators associated with $(\mathcal{A}^{G}_{t})_{t\in\mathcal{I}}$   from the  corresponding bounds for $(\mathcal{M}^{G}_{t})_{t\in\mathcal{I}}$ on $L_{p}(\mathcal{N})$, where $\mathcal{I} \subseteq (0,\infty)$. Furthermore,  the $L_{p}$ boundedness of maximal ergodic  Hardy-Littlewood operators induces the  bilaterally almost uniform convergence.

	\begin{proposition}\label{1103.3}
		Given $\mathcal{I} \subseteq (0,\infty)$, suppose that for  $g\in L_{p}(\mathcal{N})$ with  $1<p<\infty$,  there exists a constant $C_{p}>0$ independent of the dimension $d$ such that 
		\begin{align}\label{1102.1}
			\Big\|\sup_{t\in\mathcal{I}}\mathcal{M}_{t}^{G}g\Big\|_{L_{p}(\mathcal{{N}})}\leq C_{p}\|g\|_{L_{p}(\mathcal{{N}})}.
		\end{align}
		   Then, for every $f\in L_{p}( \mathcal{N}_{0})$, the following inequality holds:
		\begin{align*}
			\Big\|\sup_{t\in \mathcal{I}}\mathcal{A}_{t}^{G}f\Big\|_{L_{p}(\mathcal{N}_{0})}\leq C_{p}\|f\|_{L_{p}(\mathcal{N}_{0})}, 
		\end{align*}
	where   $C_{p}$ is the same constant as in  $(\ref{1102.1})$.
	\end{proposition}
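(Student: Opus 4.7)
The plan is a Calder\'on-style transference adapted to the semi-commutative max norm. Hypothesis $(\ref{1102.1})$ is a dimension-free bound for the discrete convolution maximal operator on $L_p(\ell_\infty(\mathbb{Z}^d)\overline{\otimes}\mathcal{M})$, and its statement is uniform in the base von Neumann algebra $\mathcal{M}$. I would therefore apply it with $\mathcal{M}$ replaced by the dynamical algebra $\mathcal{N}_{0}$. Decomposing $f$ into four positive pieces, invoking the triangle inequality for $\|\sup\|$, and exhausting $\mathcal{I}$ by finite subsets together with monotonicity of the max norm reduces the proof to $f\in L_p(\mathcal{N}_{0})_{+}$ and a fixed finite $\mathcal{I}_0\subset\mathcal{I}$.

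Next, choose $N_0$ so that $G_t\cap\mathbb{Z}^d\subset B_{N_0}$ for every $t\in\mathcal{I}_0$, and for $N\gg N_0$ introduce the test function
\[
g_N(y):=\chi_{B_N}(y)\,\alpha(-y)f,\qquad y\in\mathbb{Z}^d,
\]
regarded as an element of $L_p(\mathbb{Z}^d;L_p(\mathcal{N}_{0}))\simeq L_p(\ell_\infty(\mathbb{Z}^d)\overline{\otimes}\mathcal{N}_{0})$. Trace-preservation of $\alpha$ gives the norm identity $\|g_N\|_{L_p}^{p}=|B_N\cap\mathbb{Z}^d|\cdot\|f\|_{L_p(\mathcal{N}_{0})}^{p}$. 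A direct group-law computation, using $\alpha(y-z)=\alpha(y)\alpha(-z)$ together with the inclusion $G_t\cap\mathbb{Z}^d\subset B_{N_0}$, shows that for every \emph{core} point $z\in B_{N-N_0}\cap\mathbb{Z}^d$ and every $t\in\mathcal{I}_0$,
\[
\mathcal{M}_t^{G}g_N(z)=\alpha(-z)\,\mathcal{A}_t^{G}f,
\]
since each translate $z-y$ with $y\in G_t\cap\mathbb{Z}^d$ remains in $B_N$.

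To transfer, I would apply hypothesis $(\ref{1102.1})$ to $g_N$ in the positive-element form $(\ref{1223.3})$ of the max norm: for any $\varepsilon>0$ there exists $A\in L_p(\ell_\infty(\mathbb{Z}^d)\overline{\otimes}\mathcal{N}_{0})_{+}$ with $\mathcal{M}_t^{G}g_N\le A$ (pointwise in $z$, in the order of $L_p(\mathcal{N}_{0})$) and $\|A\|_{L_p}\le(C_p+\varepsilon)\|g_N\|_{L_p}$. Since $\alpha(z)$ is a positive isometry on $L_p(\mathcal{N}_{0})$, the core identity upgrades to $\mathcal{A}_t^{G}f\le\alpha(z)A(z)$, whence $\|\sup_{t\in\mathcal{I}_0}\mathcal{A}_t^{G}f\|_{L_p(\mathcal{N}_{0})}^{p}\le\|A(z)\|_{L_p(\mathcal{N}_{0})}^{p}$ for every core $z$. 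Summing over $z\in B_{N-N_0}\cap\mathbb{Z}^d$ and using the Fubini identification yields
\[
|B_{N-N_0}\cap\mathbb{Z}^d|\cdot\Bigl\|\sup_{t\in\mathcal{I}_0}\mathcal{A}_t^{G}f\Bigr\|_{L_p(\mathcal{N}_{0})}^{p}\le(C_p+\varepsilon)^{p}\,|B_N\cap\mathbb{Z}^d|\,\|f\|_{L_p(\mathcal{N}_{0})}^{p}.
\]
Letting $\varepsilon\to 0$ and $N\to\infty$, so that $|B_N\cap\mathbb{Z}^d|/|B_{N-N_0}\cap\mathbb{Z}^d|\to 1$, gives the inequality on $\mathcal{I}_0$ and hence on $\mathcal{I}$.

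The main obstacle is conceptual rather than computational: in the semi-commutative setting the ``$\sup$'' is a factorization norm and not a pointwise object, so the transference cannot be run directly on the averages $\mathcal{A}_t^{G}f$ but must be passed through the dominating element $A$ furnished by $(\ref{1223.3})$. To preserve the \emph{exact} constant $C_p$ for non-positive $f$, one can sharpen the positive-domination step using the factorization definition of the max norm combined with a diagonal embedding into $\mathcal{N}_{0}\overline{\otimes}M_{B_c}$ with $B_c=B_{N-N_0}\cap\mathbb{Z}^d$: starting from a factorization $\mathcal{M}_t^{G}g_N=a y_t b$, the diagonals $\mathrm{diag}(\alpha(z)a(z))$, $\mathrm{diag}(\alpha(z)y_t(z))$, $\mathrm{diag}(\alpha(z)b(z))$ multiply to $\mathcal{A}_t^{G}f\otimes I_{B_c}$, and the scaling identity $\|x\otimes I_{B_c}\|_{L_p(\mathcal{N}_{0}\overline{\otimes}M_{B_c})}=|B_c|^{1/p}\|x\|_{L_p(\mathcal{N}_{0})}$ delivers the sharp constant after letting $N\to\infty$.
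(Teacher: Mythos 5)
Your proof is correct and rests on the same transference identity as the paper's---apply $\alpha(z)$ to $\mathcal{A}_t^G f$ to convert it into $\mathcal{M}_t^G$ of a cut-off test function, sum over a large ``core'' region of $z$'s, and let the region grow---but the implementation is genuinely different. The paper works with cubes $Q_{c_G R}$ and $Q_{c_G R(1+\varepsilon/d)}$, restricts the supremum to $t < R\varepsilon/d$ (so the restriction is removed as $R\to\infty$ and then $\varepsilon\to 0^+$), and invokes the hypothesis \emph{per fibre} $x\in X$ to get a dominating function $G_x$, implicitly needing a measurable selection in $x$ that it does not discuss. You instead exhaust $\mathcal{I}$ by finite subsets and use balls $B_{N-N_0}\subset B_N$, and crucially you apply the hypothesis \emph{once} after enlarging the base algebra from $\mathcal{M}$ to $\mathcal{N}_0=L_\infty(X)\overline{\otimes}\mathcal{M}$; this cleanly avoids the measurability issue. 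The trade-off is that you are using a formally stronger hypothesis than what (\ref{1102.1}) literally states (which fixes $\mathcal{M}$): your step ``its statement is uniform in the base von Neumann algebra'' is asserted rather than derived. In the paper's application---Theorems \ref{8.17.1} and \ref{8.18.2}, whose proofs never use anything specific about $\mathcal{M}$---this uniformity is indeed available, so your argument closes there, but a reader should be told this is an extra input rather than a consequence of the displayed hypothesis. On the constant: your initial reduction to $f\ge 0$ via four positive pieces loses a universal factor in $C_p$, and the paper has the same latent issue (it assumes $f$ positive without explaining how to preserve $C_p$); your closing remark, replacing the positive domination step by a direct diagonal embedding of the factorization $ay_tb$ into $\mathcal{N}_0\overline{\otimes}M_{B_c}$ and using that $x\mapsto x\otimes I_{B_c}$ is a trace-preserving embedding (hence isometric on the $L_p(\cdot;\ell_\infty)$ norm after normalizing $\mathrm{tr}_{B_c}$), is a nice sharpening that actually recovers the exact constant claimed in the statement and goes beyond what the paper writes down. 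Minor point: your limit step needs $|B_N\cap\mathbb{Z}^d|/|B_{N-N_0}\cap\mathbb{Z}^d|\to 1$, which holds for each fixed $d$ as $N\to\infty$; since the resulting bound is $d$-independent anyway, the non-uniform rate is harmless.
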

	\begin{proof}
		For any  symmetric convex body $G \subseteq \mathbb{R}^d$, there exists a constant $c_{G}\in \mathbb{N}$ such that $G_{t}\subseteq Q_{c_{G}t}$ for all $t>0$. Fix $f\in L_{p}( \mathcal{N}_{0}),~\varepsilon>0$, and $R \in \mathbb{N}$.  For every $x\in X$,  we define a new function:
		$$
		\phi_{x}(y)=
		\begin{cases}
			\alpha(y)f(x), & y\in Q_{c_{G}R(1+\frac{\varepsilon}{d})},\\
			0,& \mbox{otherwise}.
		\end{cases} 
		$$
 Then, for every  $z\in Q_{c_{G}R}\cap\mathbb{Z}^d$ and $t<\frac{R\varepsilon}{d}$, it follows that
		\begin{align*}
			\alpha(z)\mathcal{A}_{t}^{G}f(x)=&\frac{1}{|G_{t}\cap\mathbb{Z}^d|}\sum_{y\in G_{t}\cap\mathbb{Z}^d }\alpha(z)\alpha(y)f(x)\\
			=&\frac{1}{|G_{t}\cap\mathbb{Z}^d|}\sum_{y\in G_{t}\cap\mathbb{Z}^d }\alpha(z+y)f(x)\\
			=&\frac{1}{|G_{t}\cap\mathbb{Z}^d|}\sum_{y\in G_{t}\cap\mathbb{Z}^d }\phi_{x}(z+y)=\mathcal{M}_{t}^{G}\phi_{x}(z),
		\end{align*}
	where we use the fact that $z+y\in Q_{c_{G}R(1+\frac{\varepsilon}{d})} $. Hence,
		\begin{align}\label{250219.1}
		\sum_{z\in Q_{c_{G}R}\,\cap\, \mathbb{Z}^d}\bigg\|\sup_{t\in \mathcal{I}\,\cap\,(0,\frac{R\varepsilon}{d})}\alpha(z)\mathcal{A}_{t}^{G}f\bigg\|^p_{L_{p}(\mathcal{N}_{0})}=\sum_{z\in Q_{c_{G}R}\,\cap\, \mathbb{Z}^d}\bigg\|\sup_{t\in  \mathcal{I}\,\cap\,(0,\frac{R\varepsilon}{d})}\mathcal{M}_{t}^{G}\phi_{(\cdot)}(z) \bigg\|^p_{L_{p}(\mathcal{N}_{0})}.
	\end{align}
	  From now on, we assume that $f$ is positive. Since   $\alpha(y) \in Aut(\mathcal{M})$ for all $y\in\mathbb{Z}^d$, it follows that  the function $\mathcal{M}_{t}^{G}\phi_{x}(z)$ is  selfadjoint.   Therefore, by (\ref{1223.3}) and assumption (\ref{1102.1}), there exists a positive operator-valued function  $G_{x}(\cdot)\in L_{p}(\mathcal{N})$ such that 
		\begin{align}\label{241021.001}
			- G_{x}(z)\leq \mathcal{M}_{t}^{G}\phi_{x}(z)\leq G_{x}(z),\quad \forall \,t\in \mathcal{I}, 
		\end{align}
	with 
	\begin{align}\label{241021.002}
		\|G_{x}(\cdot )\|_{L_{p}(\mathcal{N})}\leq C_{p}\|\phi_{x}(\cdot )\|_{L_{p}(\mathcal{N})}.
	\end{align}
Taking into account (\ref{241021.001}),   (\ref{250219.1}) is bounded by 
		\begin{align}\label{250116.1}
		\sum_{z\in Q_{c_{G}R}\,\cap\, \mathbb{Z}^d} \|G_{(\cdot)}(z)\|^{p}_{L_{p}(\mathcal{N}_{0})} 
			=\sum_{z\in Q_{c_{G}R}\,\cap\, \mathbb{Z}^d} \tau\int_{X}|G_{x}(z)|^{p}dx
			\leq  \int_{X}\|G_{x}(\cdot )\|_{L_{p}(\mathcal{N})}^{p}dx.
			\end{align}		
	Moreover, by (\ref{241021.002}) and the isometric automorphism of $\alpha$ on $L_{p}(\mathcal{N}_{0})$, it is clear that  
		\begin{align}\label{250224.3}
			 \int_{X}\|G_{x}(\cdot )\|&_{L_{p}(\mathcal{N})}^{p}dx\leq C_{p}^{p}\int_{X}\|\phi_{x}(\cdot )\|_{L_{p}(\mathcal{N})}^{p}dx\nonumber\\
			=& C_{p}^{p} \int_{X}\tau\sum_{z\in Q_{c_{G}R(1+\frac{\varepsilon}{d})}\cap \mathbb{Z}^d} |\alpha(z)f(x)|^{p}dx\nonumber\\
			=& C_{p}^{p}  \sum_{z\in Q_{c_{G}R(1+\frac{\varepsilon}{d})}\cap \mathbb{Z}^d}\|\alpha(z)f \|^{p}_{L_{p}(\mathcal{N}_{0})} 
			= C_{p}^{p}\sum_{z\in Q_{c_{G}R(1+\frac{\varepsilon}{d})}\cap \mathbb{Z}^d}\|f \|^{p}_{L_{p}(\mathcal{N}_{0})}.
		\end{align}
Combining (\ref{250219.1}) with (\ref{250116.1}) and (\ref{250224.3}), we obtain 
		\begin{align*}
			(2c_{G}R)^d\bigg\|\sup_{t\in  \mathcal{I}\,\cap\,(0,\frac{R\varepsilon}{d})}\alpha(z)\mathcal{A}_{t}^{G}f\bigg\|^p_{L_{p}(\mathcal{N}_{0})}\leq C_{p}^{p}\bigg(2c_{G}R\bigg(1+\frac{\varepsilon}{d}\bigg)+1\bigg)^d\|f\|^p_{L_{p}(\mathcal{N}_{0})}.
		\end{align*}
		Dividing both sides by $(2c_{G}R)^d$ yields
		\begin{align*}
			\bigg\|\sup_{t\in  \mathcal{I}\,\cap\,(0,\frac{R\varepsilon}{d})}\alpha(z)\mathcal{A}_{t}^{G}f \bigg\|^p_{L_{p}(\mathcal{N}_{0})}\leq C_{p}^{p}\bigg(\bigg(1+\frac{\varepsilon}{d}\bigg)+\frac{1}{2c_{G}R}\bigg)^d\|f\|^p_{L_{p}(\mathcal{N}_{0})}.
		\end{align*}
		  Taking $R\rightarrow \infty$ and invoking  the monotone convergence theorem,  one has
		\begin{align*}
			\Big\|\sup_{t\in \mathcal{I}}\alpha(z)\mathcal{A}_{t}^{G}f \Big\|^p_{L_{p}(\mathcal{N}_{0})}\leq C_{p}^p\bigg(1+\frac{\varepsilon}{d}\bigg)^d\|f\|^p_{L_{p}(\mathcal{N}_{0})}\leq  C_{p}^pe^{\varepsilon}\|f\|^p_{L_{p}(\mathcal{N}_{0})}.
		\end{align*}
		Letting $\varepsilon\rightarrow 0^{+}$, we  conclude that
		\begin{align}\label{250116.2}
			\Big\|\sup_{t\in \mathcal{I}}\alpha(z)\mathcal{A}_{t}^{G}f \Big\|_{L_{p}(\mathcal{N}_{0})}\leq  C_{p}\|f \|_{L_{p}(\mathcal{N}_{0})}.
		\end{align}
		 Recall that it was shown in \cite[Lemma 3.3]{MR4048299}    that
		\begin{align}\label{250116.3}
			\Big\|\sup_{t\in \mathcal{I}}f_{t}\,\Big\|_{L_{p}(\mathcal{N}_{0})}=\Big\|\sup_{t\in \mathcal{I}} \big(\alpha(z)f_{t}\big)_{t}\Big\|_{L_{p}(\mathcal{N}_{0})}.
		\end{align}
     Applying (\ref{250116.3}) to (\ref{250116.2}),  this proof is completed.
	\end{proof}	
	\begin{proposition}\label{1227.2}
		Let  $1<p<\infty$ and $f\in L_{p}( \mathcal{N}_{0})$. Suppose that
		\begin{align}\label{250217.2}
			\Big\|\sup_{t>0}\mathcal{A}_{t}^{G}f\Big\|_{L_{p}( \mathcal{N}_{0})}\lesssim\|f\|_{L_{p}( \mathcal{N}_{0})},
		\end{align}
		  then  we have
		\begin{align*} 
			\mathcal{A}_{t}^{G}f\xrightarrow{b.a.u} Ff \quad \mbox{as}\quad   t\rightarrow \infty.
		\end{align*}
	\end{proposition}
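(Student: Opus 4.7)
The plan is the standard three-step strategy for noncommutative pointwise ergodic theorems: reduce to the case $Ff=0$, establish uniform convergence to $0$ on a dense subspace of $(I-F)L_{p}(\mathcal{N}_{0})$, and then upgrade to bilaterally almost uniform convergence on the whole space by means of the maximal inequality \eqref{250217.2}.

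First I would split $f=Ff+(f-Ff)$. Since $\alpha(v)(Ff)=Ff$ for every $v\in\mathbb{Z}^d$, one has $\mathcal{A}_{t}^{G}(Ff)=Ff$, so $\mathcal{A}_{t}^{G}f-Ff=\mathcal{A}_{t}^{G}(f-Ff)$, and $F(f-Ff)=0$. It therefore suffices to show $\mathcal{A}_{t}^{G}g\xrightarrow{b.a.u.}0$ for any $g\in(I-F)L_{p}(\mathcal{N}_{0})$. Next I would introduce the coboundary space
\[
\mathcal{D}=\mathrm{span}\bigl\{h-\alpha(v)h:\ h\in L_{p}(\mathcal{N}_{0})\cap\mathcal{N}_{0},\ v\in\mathbb{Z}^d\bigr\}.
\]
Because $\alpha$ extends to an isometric $\mathbb{Z}^d$-action on the reflexive space $L_{p}(\mathcal{N}_{0})$ for $1<p<\infty$, a Hahn--Banach argument combined with the self-adjointness of the conditional expectation $F$ identifies the $L_{p}$-closure of $\mathcal{D}$ with $(I-F)L_{p}(\mathcal{N}_{0})$.

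For a coboundary $g_{0}=h-\alpha(v)h$ with $h\in L_{p}\cap\mathcal{N}_{0}$, a translation of the summation index produces
\[
\mathcal{A}_{t}^{G}g_{0}=\frac{1}{|G_{t}\cap\mathbb{Z}^d|}\Bigl(\sum_{y\in G_{t}\cap\mathbb{Z}^d}\alpha(y)h-\sum_{y\in(G_{t}+v)\cap\mathbb{Z}^d}\alpha(y)h\Bigr),
\]
whence
\[
\|\mathcal{A}_{t}^{G}g_{0}\|_{\mathcal{N}_{0}}\leq \frac{|(G_{t}\triangle(G_{t}+v))\cap\mathbb{Z}^d|}{|G_{t}\cap\mathbb{Z}^d|}\,\|h\|_{\mathcal{N}_{0}}.
\]
Since $G$ is a bounded symmetric convex body with non-empty interior, the numerator is $O(t^{d-1})$ while the denominator is of order $t^d$, so the ratio vanishes as $t\to\infty$. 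Hence $\mathcal{A}_{t}^{G}g_{0}\to 0$ in the operator norm of $\mathcal{N}_{0}$, which is much stronger than b.a.u.\ convergence.

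For the extension step, fix $g\in(I-F)L_{p}(\mathcal{N}_{0})$ and $\varepsilon>0$, and choose $g_{k}\in\mathcal{D}$ with $\|g-g_{k}\|_{L_{p}(\mathcal{N}_{0})}\leq 4^{-k}$. By the assumed maximal inequality \eqref{250217.2} applied to $g-g_{k}$, together with the factorization definition of the $L_{p}(\mathcal{N}_{0};\ell_{\infty})$ norm, a noncommutative Chebyshev/Cuculescu-type selection produces projections $e_{k}\in\mathcal{N}_{0}$ with $(\mu\otimes\tau)(1-e_{k})\leq\varepsilon 2^{-k}$ and $\sup_{t>0}\|e_{k}\mathcal{A}_{t}^{G}(g-g_{k})e_{k}\|_{\mathcal{N}_{0}}\leq 2^{-k}$. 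Taking $e=\bigwedge_{k}e_{k}$ gives $(\mu\otimes\tau)(1-e)<\varepsilon$, and for any $\delta>0$, selecting $k$ with $2^{-k}<\delta/2$ and invoking the uniform convergence on $g_{k}$ established above yields $\limsup_{t\to\infty}\|e\mathcal{A}_{t}^{G}g\,e\|_{\mathcal{N}_{0}}\leq\delta$. Letting $\delta\to 0$ gives the required b.a.u.\ convergence. The main obstacle is this extension step: in the commutative setting the exceptional set is truncated by a characteristic function, whereas here the cut-off projections $e_{k}$ must be extracted from the factorizations $a\,y_{t}\,b$ underlying the maximal norm, and one must verify that they simultaneously control $\|e_{k}\mathcal{A}_{t}^{G}(g-g_{k})e_{k}\|_{\mathcal{N}_{0}}$ uniformly in $t$ while keeping $(\mu\otimes\tau)(1-e_{k})$ small.
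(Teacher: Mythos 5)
Your proposal shares the same backbone as the paper's proof — reduce to $Ff=0$, show the coboundary subspace $\{h-\alpha(v)h\}$ is dense in $(I-F)L_p(\mathcal{N}_0)$, and use the symmetric-difference estimate $\|\mathcal{A}_t^G(h-\alpha(v)h)\|_\infty\lesssim |G_t\triangle(G_t+v)|/|G_t\cap\mathbb{Z}^d|\to 0$ — but the final step is handled by a genuinely different device. The paper works entirely inside the Junge--Xu framework: it shows $(\mathcal{A}_t^G f_k)_t\in L_p(\mathcal{N}_0;c_0)$ by an $L_q$--$L_\infty$ interpolation bound $\|\sup_{s_1<t<s_2}\mathcal{A}_t^G f_k\|_p \le \|\sup\mathcal{A}_t^G f_k\|_q^{q/p}(\sup\|\mathcal{A}_t^G f_k\|_\infty)^{1-q/p}\to 0$ as $s_1\to\infty$, uses that $L_p(\mathcal{N}_0;c_0)$ is closed in $L_p(\mathcal{N}_0;\ell_\infty)$ to pass to the limit, and then invokes Lemma~\ref{1227.1} to convert membership in $c_0$ into b.a.u.\ convergence. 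You instead build the exceptional projection by hand via a Cuculescu-type selection and then take a countable meet $e=\bigwedge_k e_k$. Both routes are legitimate; the paper's is cleaner because the $c_0$-machinery encapsulates exactly the projection extraction you are reconstructing, while yours is more elementary and self-contained once the extraction lemma is in hand.

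The one point to flag is that you present the projection extraction as an unresolved ``main obstacle.'' It is in fact routine in this setting: apply the hypothesis to the self-adjoint real and imaginary parts of $g-g_k$, use the characterization~\eqref{1223.3} to obtain a positive $a_k\in L_p(\mathcal{N}_0)$ with $-a_k\le\mathrm{Re}\,\mathcal{A}_t^G(g-g_k)\le a_k$ for all $t$ and $\|a_k\|_p\lesssim\|g-g_k\|_p$, take the spectral projection $e_k=\chi_{[0,\lambda_k]}(a_k)$ (and likewise for the imaginary part, then meet the two), and then Chebyshev in $L_p$ gives $\tau(1-e_k)\le\|a_k\|_p^p/\lambda_k^p$ while $\|e_k\,\mathcal{A}_t^G(g-g_k)\,e_k\|_\infty\le 2\lambda_k$. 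With $\|g-g_k\|_p\le 4^{-k}$ and $\lambda_k=2^{-k}$ this yields $\tau(1-e_k)\lesssim 2^{-kp}$, which is $\le\varepsilon 2^{-k}$ for $k$ large enough (using $p>1$); starting the sequence at such a $k_0(\varepsilon)$ closes the argument. So the proposal is sound, just stops short of carrying out a step it correctly identifies as the crux.
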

	\begin{proof}
		Given $f\in L_{p}( \mathcal{N}_{0})$, by Lemma \ref{1227.1},  it suffices to prove  $(\mathcal{A}_{t}^{G}f-Ff)_{t>0}\in L_{p}( \mathcal{N}_{0};c_{0})$. 
		
		 We  first claim that the   subset
		\begin{align*}
			S=\left\{h-\alpha(v)h:h
			\in L_{1}( \mathcal{N}_{0})\cap L_{\infty}( \mathcal{N}_{0}),\quad v\in\mathbb{Z}^d\right\}
		\end{align*}
		   is dense in $\overline{(I-F)L_{p}( \mathcal{N}_{0})}$ for all $1\leq p<\infty$. It is enough to consider the case of $p=2$, since $ L_{2}( \mathcal{N}_{0})\,\cap\, L_{p}( \mathcal{N}_{0})$ is dense in $L_{p}( \mathcal{N}_{0})$ for any $1\leq p<\infty$. This goal is to show that  if $\langle \mathfrak{h},g\rangle=0$ for all $\mathfrak{h}\in S$, then $g\in F(L_{2}( \mathcal{N}_{0}))$.
		Taking $\mathfrak{h}= h-\alpha(v)h$ with $h\in L_{1}( \mathcal{N}_{0})\cap L_{\infty}( \mathcal{N}_{0})$, $v\in\mathbb{Z}^d$, and  decomposing $g$   into the linear  combination of  two  selfadjionts parts, i.e.,  $g=g_{1}+ig_{2}$, it follows that
		\begin{align*}
			0= \langle\mathfrak{h},g \rangle =&\langle \mathfrak{h},g_{1}\rangle+i \langle \mathfrak{h},g_{2} \rangle \\
			=& \langle h-\alpha(v)h,g_{1}\rangle  +i \langle h-\alpha(v)h,g_{2} \rangle \\
			=& \langle  h,g_{1}\rangle - \langle h,\alpha(v)g_{1} \rangle +i  \langle  h,g_{2}\rangle  -i \langle h ,\alpha(v)g_{2} \rangle \\
			=& \langle  h,g-\alpha(v)g \rangle.
		\end{align*}
		Thus $g=Fg\in F(L_{2}( \mathcal{N}_{0}))$, since $h\in L_{1}( \mathcal{N}_{0})\cap L_{\infty}( \mathcal{N}_{0})$ and $v\in\mathbb{Z}^d$ are arbitrary. Applying the  density to  operator $Ff-f$,  there exists a sequence  $(f_{k})_{k}\in S$ such that
		\begin{align}\label{250704.1}
			\lim_{k\rightarrow\infty}\|f-Ff-f_{k}\|_{L_{p}( \mathcal{N}_{0})}=0.
		\end{align}
		By assumption (\ref{250217.2}), one has
		\begin{align}\label{250704.2}
		\Big\|\sup_{t>0}\big(\mathcal{A}_{t}^{G}f-Ff-\mathcal{A}_{t}^{G}f_{k}\big) \Big\|_{L_{p}( \mathcal{N}_{0})}=&	\Big\|\sup_{t>0}\mathcal{A}_{t}^{G}(f-Ff -f_{k})\Big\|_{L_{p}( \mathcal{N}_{0})}\nonumber\\
			\lesssim& \|f-Ff-f_{k}\|_{L_{p}( \mathcal{N}_{0})}.
		\end{align}
		Combing (\ref{250704.1}) and (\ref{250704.2}), we get 
		\begin{align*}
			\lim_{k\rightarrow\infty}(\mathcal{A}_{t}^{G}f_{k})_{t}=(\mathcal{A}_{t}^{G}f-Ff)_{t}\quad  \mbox{in} \quad L_{p}(  \mathcal{N}_{0};\ell_{\infty}).
		\end{align*}
	    Since $L_{p}( \mathcal{N}_{0};c_{0})$ is closed in $L_{p}(  \mathcal{N}_{0};\ell_{\infty})$,  it reduced to show 
	    $(\mathcal{A}_{t}^{G}f_{k})_{t}\in L_{p}( \mathcal{N}_{0};c_{0})$ for every $ f_{k}\in S$. 
	    
	    Fix $ f_{k}=h-\alpha(v)h\in S$ with $h\in L_{1}( \mathcal{N}_{0})\cap L_{\infty}( \mathcal{N}_{0})$ and $v\in\mathbb{Z}^d$.  Choosing $1<q<p$, $0<s_{1}<s_{2}$, and using Theorem \ref{0.4} with assumption (\ref{250217.2}), one obtains
		\begin{align}\label{250710.1}
			\Big\|\sup_{s_{1}<t<s_{2}}\mathcal{A}_{t}^{G} f_{k}\Big\|_{L_{p}( \mathcal{N}_{0})}\leq&  \Big\|\sup_{s_{1}<t<s_{2}}\mathcal{A}_{t}^{G} f_{k}\Big\|_{L_{q}( \mathcal{N}_{0})}^{\frac{q}{p}}\bigg(\sup_{s_{1}<t<s_{2}}\|\mathcal{A}_{t}^{G} f_{k}\|_{\infty}\bigg)^{1-\frac{q}{p}}\nonumber\\ 
			\lesssim&   \| f_{k}\|_{L_{q}( \mathcal{N}_{0})}^{\frac{q}{p}}\bigg(\sup_{s_{1}<t<s_{2}}\|\mathcal{A}_{t}^{G} f_{k}\|_{\infty}\bigg)^{1-\frac{q}{p}}.
		\end{align}
		Notice that
		\begin{align*}
			\mathcal{A}_{t}^{G} f_{k}=&\mathcal{A}_{t}^{G}h-\mathcal{A}_{t}^{G}\alpha(v)h\\
			=&\frac{1}{|G_{t}\cap\mathbb{Z}^d|}\sum_{y\in G_{t}\cap\mathbb{Z}^d }\alpha(y)h-\frac{1}{|G_{t}\cap\mathbb{Z}^d|}\sum_{y\in G_{t}\cap\mathbb{Z}^d }\alpha(v+y)h\\
			=&\frac{1}{|G_{t}\cap\mathbb{Z}^d|}\sum_{y\in G_{t}\cap\mathbb{Z}^d }\alpha(y)h-\frac{1}{|G_{t}\cap\mathbb{Z}^d|}\sum_{y\in G_{t}\cap\mathbb{Z}^d +v}\alpha(y)h\\
			=&\frac{\sum_{y\in\mathbb{Z}^d}\big(\chi_{G_{t}\cap\mathbb{Z}^d}(y)-\chi_{G_{t}\cap\mathbb{Z}^d+v}(y)\big)\alpha(y)h}{|G_{t}\cap\mathbb{Z}^d|}.
		\end{align*}
		Then, the triangle inequality yields that 
		\begin{align}\label{250710.2}
			\|\mathcal{A}_{t}^{G} f_{k}\|_{\infty}\leq& \frac{\sum_{y\in\mathbb{Z}^d}|\chi_{G_{t}\cap\mathbb{Z}^d}(y)-\chi_{G_{t}\cap\mathbb{Z}^d+v}(y)|\cdot\|\alpha(y)h\|_{\infty}}{|G_{t}\cap\mathbb{Z}^d|}\nonumber\\
			\leq &\frac{|\chi_{G_{t}\cap\mathbb{Z}^d}\triangle\chi_{G_{t}\cap\mathbb{Z}^d+v}|\cdot\|h\|_{\infty}}{|G_{t}\cap\mathbb{Z}^d|}\rightarrow 0 \quad \mbox{as}\quad t\rightarrow \infty.
		\end{align}
	Combining (\ref{250710.1}) with (\ref{250710.2}), we get 
	\begin{align*}
			\Big\|\sup_{s_{1}<t<s_{2}}\mathcal{A}_{t}^{G} f_{k}\Big\|_{L_{p}( \mathcal{N}_{0})}\rightarrow 0 \quad \mbox{as}\quad s_{1}\rightarrow \infty,
	\end{align*}
		which implies that $(\mathcal{A}_{t}^{G}f_{k})_{t>0}$ is approximated by $(\mathcal{A}_{t}^{G}f_{k})_{0<t<s_{1}}$ in   $L_{p}(  \mathcal{N}_{0};\ell_{\infty})$, hence belonging to  $L_{p}( \mathcal{N}_{0};c_{0})$. 
	\end{proof}

			\end{document}